\newcommand{\IHat}{\hat{\imath}_{\tau_\delta}}
\long\def\symbolfootnote[#1]#2{\begingroup%
\def\thefootnote{\fnsymbol{footnote}}\footnote[#1]{#2}\endgroup}
\titleformat{\section}{\large\bfseries}{\thesection.}{.5em}{}
\titlespacing*{\section}{0pt}{*3}{*2}
\titleformat{\subsection}{\normalfont\bfseries}{\thesubsection.}{.5em}{}
\titlespacing*{\subsection} {0pt}{*3}{*2}
\titleformat{\subsubsection}{\normalfont\bfseries}{\thesubsubsection.}{.5em}{}
\titlespacing*{\subsubsection} {0pt}{*3}{*2}
\theoremstyle{plain} 
\newtheorem{theorem}{Theorem}[section]
\newtheorem{lemma}{Lemma}[section]
\theoremstyle{definition} 
\newtheorem{definition}{Definition}[section]
\newtheorem{remark}{Remark}[section]
\numberwithin{equation}{section} 
\begin{document}

\title{Non-Asymptotic Sequential Tests for Overlapping Hypotheses Applied to Near Optimal Arm Identification in Bandit Models}

\date{}

\maketitle


\author{
\begin{center}
\vskip -1cm

\textbf{\large Aurélien Garivier$^1$ and Emilie Kaufmann$^2$}

$^1$ Unit\'e de Math\'ematiques Pures et Appliqu\'ees et Laboratoire de l'Informatique du Parallélisme\\
				\'Ecole Normale Sup\'erieure de Lyon, Universit\'e de Lyon, Lyon, France \\
$^2$ CNRS \& Univ. Lille, Inria SequeL, UMR 9189- CRIStAL, Lille, France 
\end{center}
}

\symbolfootnote[0]{\normalsize Address correspondence to E. Kaufmann,
Inria Lille Nord-Europe, 40 avenue Halley, 59650 Villeneuve d'Ascq, France; E-mail: emilie.kaufmann@univ-lille.fr}

{\small \noindent\textbf{Abstract:} In this paper, we study sequential testing problems with \emph{overlapping} hypotheses. 
			We first focus on the simple problem of assessing if the mean $\mu$ of a Gaussian distribution is smaller or larger than a fixed $\epsilon>0$; if $\mu\in(-\epsilon,\epsilon)$, both answers are considered to be correct. 
			Then, we consider PAC-best arm identification in a bandit model: given $K$ probability distributions on $\R$ with means $\mu_1,\dots,\mu_K$, we derive the asymptotic complexity of identifying, with risk at most $\delta$, an index $I\in\{1,\dots,K\}$ such that $\mu_I\geq \max_i\mu_i -\epsilon$. We provide non asymptotic bounds on the error of a parallel General Likelihood Ratio Test, which can also be used for more general testing problems. We further propose lower bound on the number of observation needed to identify a correct hypothesis. 
			Those lower bounds rely on information-theoretic arguments, and specifically on two versions of a change of measure lemma (a high-level form, and a low-level form) whose relative merits are discussed.}
\\ \\
{\small \noindent\textbf{Keywords:} Best arm identification; Generalized Likelihood Ratio test; Multi-armed bandits; Sequential testing.}
\\ \\
{\small \noindent\textbf{Subject Classifications:} 62L10.}

\section{INTRODUCTION} \label{s:Intro}

	In order to extract information on a random source $\bP$, the classical statistical framework relies on a fixed sample $X_1,\dots,X_n$ of a given size $n$. For example, a classical hypothesis test is a (possibly randomized) function $\hat{\imath}_n$ of the sample. In \emph{sequential statistics}, on the contrary, the statistician is allowed to adaptively decide on the sample size. For example, the sequential hypothesis testing of $\cH_0$ versus $\cH_1$ consists of a stopping time $\tau$ with respect to the filtration $(\mathcal{F}_t)_t$, where $\mathcal{F}_t=\sigma(X_1,\dots,X_t)$, and a decision rule $\hat{\imath}_\tau$ which is $\mathcal{F}_\tau$-measurable. 
	
	Even more power is devoted to the statistician in \emph{active testing}: the statistician not only chooses when to stop, but also sequentially influences the observations. For example, in the \emph{multi-armed bandit problem}, a set $\mathcal{A}$ of unknown random sources is available on demand, and the statistician chooses one of them $A_t\in\mathcal{A}$ at every time step $t$, from which she gets a new sample. The choice of $A_t$ is made based only upon past observations: $A_t$ is $\mathcal{F}_{t-1}$-measurable. The goal of the statistician is typically \emph{best-arm identification}, that is to infer which source has the highest expectation using as few samples as possible. Given a risk parameter $\delta$, a strategy (consisting of a sampling, a stopping and a decision rule) is called \emph{$\delta$-correct} if the probability that the decision $\hat{a}_\tau$ is not the source with highest expectation is smaller than $\delta$. The \emph{sample complexity} is the smallest achievable expectation for the stopping time of a $\delta$-correct strategy. 
	
	In the case of a finite set of sources in a one-dimensional canonical exponential model, the complexity of the best-arm identification problem is well understood since \citep{GK16,Russo16}. A non-asymptotic lower bound has been given on the sample complexity: the expectation of the stopping of any $\delta$-correct strategy is always larger than the product of a model-dependent \emph{characteristic time}, which can be computed as the solution of a convex optimization problem depending on the sources, and a confidence term which scales like $\ln(1/\delta)$. 
	This lower-bound is tight: a strategy asymptotically matching it when the risk $\delta$ goes to $0$ has been described.
	
	But the problem of $\delta$-correct best-arm identification is not really well-posed: the characteristic time of a model tends to infinity when one of the sources has an expectation almost as high as the best one, and $\delta$-correct strategies never stop (with high probability) if two sources are simultaneously optimal. In fact, a much more natural and useful objective is to search for a \emph{Probably Approximately Correct} (PAC) decision: given a risk parameter $\delta$ and a tolerance parameter $\epsilon>0$, a strategy is called $(\epsilon,\delta)$-PAC if for every bandit model of the considered class the probability that the chosen source $\hat{a}_\tau$ has a mean smaller that the highest one \emph{minus $\epsilon$} is smaller than $\delta$. Note that a strategy is $\delta$-correct if it is $(0,\delta)$-PAC: the latter notion is thus a generalization of the former.
	
	The purpose of this paper is to show to what extend the results on the complexity of $\delta$-correct  best-arm identification can be generalized to the PAC setting.
	An interesting feature of the latter is that several decisions may simultaneously be admissible if several source means are within $\epsilon$ of their maximum. In other words, this supposes to deal with \emph{sequential testing of overlapping hypotheses}, a surprisingly little investigated problem of interest on its own. 
	
	The paper is organized as follows: Section~\ref{sec:overhyp} is devoted to the general problem of sequential testing of overlapping hypotheses. A generic sequential test based on generalized likelihood ratios (called the parallel GLRT) is introduced. Section~\ref{sec:twogaussians} details the simplest example: testing if the mean of a Gaussian distribution is smaller than $\epsilon$ or larger that $-\epsilon$, for which we propose a $\delta$-correct parallel GLRT with optimal sample complexity. We show in Section~\ref{sec:PACbandit} that parallel GLRTs remain $\delta$-correct for very general active identification problems in multi-armed bandit models. The rest of the paper investigates the sample complexity of an important active identification problem: PAC best-arm identification. Section~\ref{sec:lbPACBAI} provides our main result on the complexity of PAC best-arm identification. In a nutshell, it appears that a complexity generalizing the results known for $\delta$-correct procedures can be identified, but only asymptotically, under stronger hypothesis, and using less elegant arguments: high-level information-theoretic reasoning cannot be applied here. In Section~\ref{sec:OptimalStrategy} we then propose an $(\epsilon,\delta)$-PAC strategy that combines the parallel GLRT with a ``tracking'' sampling rule designed for converging to an optimal allocation. This strategy is matching our lower bound for a large family of regular bandit instances.

\section{THE PARALLEL GLRT FOR TESTING OVERLAPPING HYPOTHESES}\label{sec:overhyp}
	
	\subsection{From Sequential to Active Testing}
	
	In a classical parametric hypothesis testing framework, some data $D$ is collected whose distribution depends on some parameter $\mu \in \cR$ and is denoted by $\bP_{\mu}$. Given two hypotheses 
	\[\cH_0 : (\mu \in \cR_0) \ \ \ \text{and} \ \ \ \cH_1 : (\mu \in \cR_1)\;,\]
	where $\cR_0$ and $\cR_1$ are two \emph{disjoint} subsets of $\cR$, the goal is to build a \emph{test} $\Phi$  mapping the data to a decision in $\{0,1\}$, where $\Phi(D)=1$ means that we \emph{reject} hypothesis $\cH_0$ based on the observed data. 
	Usually, the data collected consist of a fixed number $n$ of i.i.d. samples from some distribution depending on $\mu$, and the two hypotheses do not play symmetric roles: the risk to reject the null hypothesis $H_0$ when it is true, called the type I error $\alpha := \sup_{\mu \in \cR_0} \bP_{\mu} (\Phi(D) = 1)\;$,
	is controlled, and the decision $\Phi(D)=0$ is not interpreted as an acceptation of $\cH_0$, but as a absence of rejection. Under a controlled type I error, the procedure aims at minimizing the type II error $\beta := \sup_{\mu \in \cR_1} \bP_{\mu} (\Phi(D) = 0)$,
	that is at maximizing \emph{power} $1 - \beta$ of the test.

	In some cases, however, the hypotheses do play symmetric roles and both type I and type II errors have the same importance. When this happens, we really want to \emph{decide for} $\cH_0$ or $\cH_1$. This distinction is of importance in what follows, and both cases will be illustrated in our approach.

	We focus in this paper on \emph{sequential testing} for \emph{symmetric hypotheses}. 
	In order to emphasize that there is no preferred (null) hypothesis, the hypotheses are denoted by $\cH_1$ and $\cH_2$. Given a data stream $X_1,X_2,\dots$ and two hypotheses $\cH_1: (\mu \in \cR_1)$ and $\cH_2: (\mu \in \cR_2)$, the goal is to stop the data collection process after some (random) number $\tau$ of observations, and based on this observation $D_\tau = (X_1,\dots,X_\tau)$ make a decision $\hat{\imath}_\tau = \Phi(D_\tau) \in \{1,2\}$, where $\hat{\imath}_\tau = i$ means that hypothesis $\cH_i$ is selected. 
	The duration $\tau$ of the test should be minimal, but needs to be long enough so that it is actually possible to discriminate between the two hypotheses. This area was pioneered by \cite{Wald45SPRT}, who proposes the Sequential Probability Ratio Test (SPRT) for two simple hypotheses, e.g. in a parametric setting $\cH_1 : (\mu = \mu_1)$ and $\cH_2 : (\mu = \mu_2)$. Among the sequential tests with prescribed type I and type II errors, the SPRT is proved to have the smallest average duration $\bE_{\mu_i}[\tau]$ for $i \in \{1,2\}$. Later, particular examples of sequential test of \emph{composite} hypotheses (in which $\cR_1$ and $\cR_2$ are not reduced to a singleton) have also been studied, see e.g.~\citep{RobbinsSiegmund74,Lai88}.  
	
	We introduce below in Section~\ref{subsec:Framework} a broader sequential testing framework, in which we allow for composite hypotheses that are \emph{possibly overlapping}, allowing for $\cR_1 \cap \cR_2 \neq \emptyset$. Moreover, we allow to simultaneously test more than two hypotheses. In this setup, the goal is to stop the data collection as quickly as possible in order to identify \emph{one} correct hypothesis. We present a generic sequential test for this problem, which is an extension or the classical Generalized Likelihood Ratio test. Then, we illustrate its efficiency  in the context of \emph{active testing}, especially for active identification problems in a multi-armed bandit model.  
	
	Active testing can be traced back to the early work of \cite{Chernoff59}: at each round, \emph{an experiment is selected and performed} in order to gain information about which of a finite set of hypotheses is true. An active test is thus made of a \emph{sampling rule} (that indicates which experiment $A_t$ is selected at round $t$) together with the stopping rule and recommendation rules $(\tau,\hat\imath)$ inherent to sequential testing. In a multi-armed bandit model, these experiments consists in sampling one of the $K$ available \emph{arms} (probability distributions) $\nu_1,\dots,\nu_K$ with respective means $\mu_1,\dots,\mu_K$. As explained in Section~\ref{sec:PACbandit} below, we assume here that the arms belong to a one-parameter exponential family, and as such can be parameterized by their means. 
	
	Many works on bandit models in the past decade have focus on the reinforcement learning framework, where the samples collected are viewed as rewards and the goal is to maximize cumulative rewards (see, e.g. \cite{Robbins52Freq,LaiRobbins85bandits,Bubeck:Survey12}). A broader class of objectives is to \emph{learn something} about the unknown parameter $\bm \mu = (\mu_1,\dots,\mu_K)$ and several active testing problems have been studied, including the $\epsilon$-best arm identification problem \citep{EvenDaral06} where the goal is to find any arm whose expectation is within $\epsilon$ of the highest mean. We show in Section~\ref{sec:PACbandit} that the parallel GLRT test provides a valid stopping rule for very general tests in bandit models. We investigate in Section~\ref{sec:OptimalStrategy} how it can help reaching the optimal sample complexity for $\epsilon$-best arm identification, when coupled with a good sampling rule.

	\subsection{Sequential Test of Overlapping Hypotheses}\label{subsec:Framework}

	We formally define our framework in a rather general setting in which one sequentially collects samples $X_1,X_2,\dots$ and the joint distribution of these samples depends on some parameter $\mu \in \cR$. Let $\cR_1, \cR_2, \dots, \cR_M$ be $M$ regions that cover the parameter space (i.e. such that $\bigcup_{i=1}^M\cR_i = \cR$) but that do not necessarily form a partition of $\cR$. We consider the corresponding $M$ hypotheses \[\cH_1 : (\mu \in \cR_1) \ \ \ \ \ \cH_2 : (\mu \in \cR_2) \ \ \ \dots \ \ \ \cH_M : (\mu \in \cR_M)\;.\] 
	We denote by $(\cF_t =\sigma(X_1,\dots,X_t))_{t\geq 1}$ the filtration generated by the observations.
	A sequential test of the hypotheses $\cH_1,\dots,\cH_M$ based on the stream of samples $(X_s)_{s \in \N}$ is a pair $(\tau,\hat{\imath}_\tau)$ where $\tau$ is a \emph{stopping rule} indicating when to stop the test (hence a is a stopping time with respect to $(\cF_t)_t$), and $\hat{\imath}_\tau \in \{1,\dots,M\}$ is a \emph{recommendation rule} that proposes upon stopping a guess for a correct hypothesis  based on the observed data (hence $\cF_{\tau}$-measurable).
	
	\begin{definition} Let $\delta \in (0,1)$ be a risk parameter. A sequential test $(\tau_{\delta}, \hat{\imath}_{\tau_\delta})$ is \emph{ $\delta$-correct } if for all  $\mu \in \cR$, \[\bP_\mu\left(\tau_\delta < \infty, \mu \notin \cR_{\hat{\imath}_{\tau_\delta}}\right)\leq \delta\;.\]
	\end{definition}
	
	In words, the probability that the test stops and recommends a wrong hypothesis $\cH_i$ (such that $\mu \notin \cR_i$) should be under control. For example, in the presence of two overlapping hypotheses, the test should be such that  $\bP_{\mu}(\IHat = 2) \leq \delta$ for all $\mu \in \cR_1\backslash \cR_2$ and $\bP_{\mu}(\IHat = 1) \leq \delta$ for all  $\mu \in \cR_2\backslash \cR_1$; however, for all $\mu \in \cR_1 \cap \cR_2$ the test may decide for either hypothesis (at any time). 
	Among all $\delta$-correct sequential tests, the preferred one is the one that stops after using as few samples as possible: we want $\tau_\delta$ to be small in expectation for every $\mu \in \cR$.

	\subsection{The Parallel Generalized Likelihood Ratio Test}
	
	The likelihood of the observations collected up to round $t$ under the distribution parameter $\lambda\in\cR$ is denoted by $\ell(X_1,\dots, X_t ; \lambda)$. 
	To decide between the $M$ symmetric hypotheses, we run in parallel $M$  sequential tests of the following two non-overlapping (and asymmetric) hypotheses
	\[\tilde{\cH}_0 : ( \mu \in \cR\backslash \cR_i) \ \ \ \text{against} \ \ \ \tilde{\cH}_1: ( \mu \in \cR_i),\]
	for each $i \in \{1,\dots,M\}$. We stop when one of these tests rejects hypothesis $\tilde{\cH}_0$. The $i$-th test causing to stop means that $\mu$ is believed to belong to $\cR_i$ (this does not exclude that it may also belong to other regions), in which case we set $\hat{\imath}_{\tau} = i$ . 
	
	Our test of $\tilde{\cH}_0$ against $\tilde{\cH}_1$ is a Generalized Likelihood Ratio Test (GLRT). The Generalized Likelihood Ratio statistic based on $t$ samples is defined as 
	\[\frac{\max_{\lambda \in \cR}\ell(X_1,\dots, X_t ; \lambda)}{\max_{\lambda \in \cR \backslash \cR_i}\ell(X_1,\dots, X_t ; \lambda)} = \inf_{\lambda \in \cR\backslash \cR_i} \frac{\ell(X_1,\dots, X_t ; \hat{\mu}(t))}{\ell(X_1,\dots, X_t ; \lambda)}\;,\]
	where $\hat{\mu}(t)$ is the maximum likelihood estimator (in $\cR$). Large values of this GLR statistic tend to reject $\tilde{\cH}_0$. Calibrating the rejection threshold for a GLR based on a fixed sample size is often done by resorting to asymptotic arguments (like the Wilks phenomenon \citep{Wilks38}) describing the limit distribution of the GLR under the null hypothesis; this is however not useful for the finite-confidence bandit analysis that follows. We propose to use a threshold function $\beta(t,\delta)$ that depends on the current number of samples $t$ and on the risk parameter $\delta$. We provide in the cases considered here a valid choice for the threshold $\beta(t,\delta)$ in order to ensure $\delta$-correctness of the corresponding test.  
	
	The parallel GLRT using the threshold function $\beta(t,\delta)$ is formally defined in the following way. Given $\delta \in (0,1)$, the stopping rule is 
	\begin{equation}\tau_\delta = \inf \left\{ t \in \N : \max_{i = 1,\dots,M} \inf_{\lambda \in \cR \backslash \cR_i} \ln \frac{\ell(X_1,\dots,X_t ; \hat{\mu}(t))}{\ell(X_1,\dots,X_t ; \lambda)} > \beta(t,\delta)\right\}\label{def:Stop}\end{equation}
	and the decision rule is
	\begin{equation}\IHat \in \argmax{i = 1,\dots,M} \inf_{\lambda \in \cR \backslash \cR_i} \ln \frac{\ell(X_1,\dots,X_\tau ; \hat{\mu}(\tau_\delta))}{\ell(X_1,\dots,X_t ; \lambda)}\label{def:Decision}\end{equation}
	(ties can be resolved arbitrarily). Note that the maximum over $i \in \{1,\dots,M\}$ in the definition of $\tau_\delta$ and $\IHat$ can be reduced to the set of hypotheses to which $\hat{\mu}(t)$ belongs.
	
	In the remainder of the paper we propose different choices of the threshold function that guarantee $\delta$-correctness for the corresponding parallel GLRT. We first study a simple example in Section~\ref{sec:twogaussians}: based on i.i.d. Gaussian samples, the goal is to decide whether the mean is ``rather positive'' or ``rather negative''. We then generalize it in Section~\ref{sec:PACbandit} where we propose a parallel GLRT which is $\delta$-correct for any active identification problem in a multi-armed bandit model. 	
	In Section~\ref{sec:lbPACBAI} and \ref{sec:OptimalStrategy}, we focus on a particular active identification problem of practical interest: $\epsilon$-best arm identification. In this context, we further show that a parallel GLRT is a natural candidate to minimize the sample complexity, when coupled with an appropriate sampling rule.

\section{SEQUENTIAL TEST OF OVERLAPPING HYPOTHESES: AN ARCHETYPICAL EXAMPLE}\label{sec:twogaussians}

	We focus here in this section on a stream of independent samples $X_1,X_2,\dots$ of the distribution $\norm{\mu}{\sigma^2}$, where the variance $\sigma^2$ is known. The statistical problem is to determine whether $\mu$ is positive or negative, but with and indifference region of width $2\epsilon$: we consider the two hypotheses
	\[\cH_1 : (\mu < \epsilon) \ \ \text{ and } \  \ \cH_2 : (\mu > -\epsilon)\;.\]
	The hypotheses $\cH_1$ and $\cH_2$ are not mutually exclusive. In this particular example, one aims at building a stopping rule $\tau_\delta$ and a recommendation rule $\IHat$ such that 
	\begin{equation}\forall \mu \leq - \epsilon, \ \bP_\mu(\IHat = 2) \leq \delta \quad \ \text{and} \quad \ \ \forall \mu \geq \epsilon, \ \bP_\mu(\IHat = 1) \leq \delta\;,\label{requirements}\end{equation}
	but any answer $\IHat$ is considered correct when $\mu\in\cR_1 \cap \cR_2 =(-\epsilon,\epsilon)$.

	Such a test can be useful when we simultaneously collect samples from two Gaussian populations with means $\mu_1$ and $\mu_2$ in order to decide which of the two has the largest mean, up to an accuracy of $\epsilon$ (that is whether, $\mu_1 > \mu_2 - \epsilon$ or $\mu_2 > \mu_1 - \epsilon$). This corresponds to solving the $\epsilon$-best arm identification problem in a two-armed Gaussian bandit model (see Section~\ref{sec:PACbandit}) based on uniformly sampling the two arms (which turns out to be optimal in this very particular case). The reason we start with this simple example is twofolds: it provides a first concrete example of parallel GLRT that is easy to analyse, and it permit to showcase our tools to study the optimal sample complexity of a test.

	\subsection{Properties of the Parallel GLRT Test} \label{subsec:PropGLRTGaussian}
	
	The expression of the parallel GLRT is easy to compute in this Gaussian case. Observing that
	\[\ln \frac{\ell(X_1,\dots, X_t ; \hat{\mu}(t))}{\ell(X_1,\dots, X_t ; \lambda)} = \frac{t(\hat{\mu}_t - \lambda)^2}{2\sigma^2}\]
	where $\hat{\mu}_s = \frac{1}{s}\sum_{i=1}^s X_i$ denote the empirical mean of the observations, and that $\cR\backslash \cR_1 = [\epsilon, +\infty)$ and $\cR\backslash \cR_2 = (-\infty, - \epsilon]$, the expressions given in \eqref{def:Stop} and \eqref{def:Decision} particularize to
	\begin{eqnarray}\tau_\delta &= &\inf \left\{t \in \N : \frac{t(|\hat{\mu}_t| + \epsilon)^2}{2\sigma^2} > \beta(t,\delta) \right\}\;,\label{strategy} \\
	\IHat &= & 2 \ \ \text{if and only if} \ (\hat{\mu}_{\tau_\delta} >0).
	\nonumber
	\end{eqnarray}
	
	\paragraph{Correctness} The $\delta$-correctness of the parallel GLRT follows from a more general argument developed in Section~\ref{sec:PACbandit} for more general active identification problems in bandit models (see Remark \ref{rem:Gaussian} therein). Lemma~\ref{lem:deltaCorrect} indeed permits to propose a threshold for which the above test is $\delta$-correct:
	\begin{equation}\beta(t,\delta) = 3\ln(\ln(t)+1) + \cT\left(\ln(1/\delta)\right),\label{ThresholdGLRTGaussian}\end{equation}
	where the function $\cT$ is such that $\cT(x) \simeq x + \ln(x)$. 
	
	\paragraph{Sample Complexity} Under the simple data collection process in this particular case, it is possible to give an upper bound on the expected number of samples used by the parallel GLRT that employs the threshold \ref{ThresholdGLRTGaussian} before stopping and making a decision.
	
	We first give a crude, asymptotic analysis. Fix $\mu \in \R$ and let $\alpha \in [0,\epsilon)$. 
	\begin{eqnarray*}
		\bE[\tau_\delta] & \leq & \sum_{t=1}^\infty \bP\left(t(|\hat{\mu}_t| + \epsilon)^2 \leq 2\sigma^2\beta(t,\delta)\right) \\
		& \leq & \sum_{t=1}^\infty \bP\left( |\hat{\mu}_t - \mu | > \alpha\right)+ \sum_{t=1}^\infty \bP\left(t(|\hat{\mu}_t| + \epsilon)^2 \leq 2\sigma^2\beta(t,\delta), |\hat{\mu}_t - \mu | \leq \alpha\right) \\
		& \leq & \sum_{t=1}^\infty \bP\left( |\hat{\mu}_t - \mu | > \alpha\right)+ \sum_{t=1}^\infty \bP\left(t(|\mu| - \alpha + \epsilon)^2 \leq 2\sigma^2\beta(t,\delta), |\hat{\mu}_t - \mu | \leq \alpha\right)\,.
	\end{eqnarray*}
	The first term is upper bounded by a constant (independent of $\delta$), while the second is upper bounded by 
	\[T_0(\delta) = \inf\left\{ T \in \N^* : \forall t \geq T , t(|\mu| - \alpha + \epsilon)^2 \leq 2\sigma^2\beta(t,\delta)\right\}\:.\]
	For $\beta(t,\delta)$ as given in \eqref{ThresholdGLRTGaussian}, Lemma \ref{lem:Aurelien} in Appendix~\ref{proof:GaussianNonAsymptotic}  shows that 
	\[T_0(\delta) = \frac{2\sigma^2}{(|\mu| - \alpha + \epsilon)^2}\ln \frac{1}{\delta} + o_{\delta \rightarrow 0}\left(\ln \frac{1}{\delta}\right)\;.\]
	Letting $\alpha$ go to zero, one obtains, for all $\mu \in \R$,
	\[\limsup_{\delta \rightarrow 0} \frac{\bE_\mu[\tau_\delta]}{\ln(1/\delta)} \leq \frac{2\sigma^2}{(|\mu|+\epsilon)^2}\;.\]
	
	The following more precise, non-asymptotic statement is proved in Appendix \ref{proof:GaussianNonAsymptotic}.

	\begin{theorem}\label{thm:GaussianNonAsymptotic} Introducing the notation $\ell(\delta) = \cT(\ln(1/\delta)) + \frac{3}{e}$, the parallel GLRT with threshold function $\beta(t,\delta)$ given in \eqref{ThresholdGLRTGaussian} satisfies
		\begin{align*}&\bE[\tau_\delta]  \leq  \frac{2\sigma^2}{(|\mu|+\epsilon)^2}\left[\ell(\delta) + \frac{6}{e}\ln\left(\frac{2\sigma^2}{(|\mu|+\epsilon)^2}\ell(\delta)\right)  + 8\sqrt{\ell(\delta) + \frac{6}{e} \ln\left(\frac{2\sigma^2}{(|\mu|+\epsilon)^2}\ell(\delta)\right)}+ {32\delta^{1/8}}\right]+ 1\;.
		\end{align*}
	\end{theorem}

	To summarize, the parallel GLRT with threshold function \eqref{ThresholdGLRTGaussian} is $\delta$-correct and the expected number of samples uses satisfies $\bE_{\mu}\left[\tau_\delta\right] \lesssim \frac{2\sigma^2}{(|\mu|+\epsilon)^2}\ln\left(\frac{1}{\delta}\right)$ for small values of the risk parameter $\delta$, for every $\mu \in \R$, even in the indifference zone $(-\epsilon,\epsilon)$. We prove in the next section that this is the smallest possible number of samples for a $\delta$-correct test.
	
	\subsection{Asymptotic Optimality of the Parallel GLRT}
	
	In order to prove the optimality of the parallel GLRT, we need to provide a \emph{lower bound} on the sample complexity of any $\delta$-correct algorithm. 
	Let us emphasize that we are not interested in minimax lower bounds (see e.g. \cite{tsyb08np}), but in problem-dependent lower bounds providing, for any (reasonable) algorithm, the minimal number of samples necessary to take a decision with the right precision and risk \emph{for each possible value of the parameters}. Such a result is typically obtained by a \emph{change of distribution} argument. 

	A change of distribution consists in relating the probability of an event $C$ under two different probability distributions. 
	It is well-known that for any event $C\in\cF_t$, 
	
	\begin{align}
\bP_{\lambda}(C) &=  \bE_{\lambda}[\ind_{C}] = \int \ind_C(x_1,\dots,x_t) \,d\bP_{\lambda}^{X_1,\dots,X_n}(x_1,\dots,x_n)\nonumber \\
	&=  \int \ind_C(x_1,\dots,x_t) \frac{d\bP_{\lambda}^{X_1,\dots,X_t}(x_1,\dots,x_t)}{d\bP_{\mu}^{X_1,\dots,X_t}(x_1,\dots,x_t)} \,d\bP_{\mu}^{X_1,\dots,X_t}(x_1,\dots,x_t) \nonumber\\
	&=  \int \ind_C(x_1,\dots,x_t) \frac{\ell(X_1,\dots,X_t ; \lambda)}{\ell(X_1,\dots,X_t ; \mu)} \,d\bP_{\mu}^{X_1,\dots,X_t}(x_1,\dots,x_t)\nonumber \\
	& =  \bE_{\mu}\Big[\ind_{C} \exp\big(-L_t(\mu,\lambda)\big)\Big]\nonumber,
	\end{align}
	where $L_t(\mu,\lambda)$ denotes the log-likelihood ratio of the observations:
	\[L_t(\mu,\lambda) := \ln \frac{\ell(X_1,\dots,X_t ; \mu)}{\ell(X_1,\dots,X_t ; \lambda)}.\]
	This elementary change of distribution has been a key ingredient for the derivation of lower bounds in the bandit literature. The most notable example is the famous regret lower bound by Lai and Robbins in the seminal paper \citep{LaiRobbins85bandits}, and further examples include notably \citep{Bubeck10BestArm}.
	
	But this ``low-level" ingredient can also serve as a basis for more elaborate arguments, which lead to more elegant and stronger results as for example illustrated in \citep{GK16}.
	We propose in Lemma~\ref{lem:CD} below two forms of change of distribution that are useful: the low-level form~\eqref{WeakConverse}, and the high-level form~\eqref{StrongConverse}. 
	The high-level form, of information-theoretic flavor, involves the Kullback-Leibler divergence  $\mathrm{KL}(\cdot,\cdot)$ from one distribution to another. For any random variable $Z$ defined on a probabilistic space $(\Omega,\mathcal{F}, \bP)$, we denote by  $\bP^Z$ the law of $Z$.

	\begin{lemma} \label{lem:CD}
		Let $\mu$ and $\lambda$ be two parameters. 
		\begin{enumerate}
			\item \textbf{High-level form:} for any stopping time $\tau$ and any event $C \in \cF_\tau$,
			\begin{equation}\bE_{\mu}[L_{\tau}(\mu,\lambda)] =\mathrm{KL}\left(\bP_\mu^{X_1,\dots,X_\tau}, \bP_\lambda^{X_1,\dots,X_\tau}\right) \geq \mathrm{kl}\big(\bP_{\mu}(C) , \bP_{\lambda}(C)\big)\;,\label{WeakConverse}\end{equation}
			where $\mathrm{kl}(x,y) = x \ln(x/y) + (1-x)\ln((1-x)/(1-y))$ is the binary relative entropy.
			\item \textbf{Low-level form:} for all $x \in \R$, $n \in \N^*$, for all event $C \in \cF_n$,  \begin{equation}\bP_{\lambda}(C) \geq e^{-x}\;\big[\bP_{\mu}(C) - \bP_{\mu}\left(L_n(\mu,\lambda) \geq x \right)\big]\;.\label{StrongConverse}\end{equation}
		\end{enumerate}
		
	\end{lemma}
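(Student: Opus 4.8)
The plan is to treat the two forms separately, in both cases starting from the elementary change-of-measure identity $\bP_{\lambda}(C) = \bE_{\mu}\big[\ind_{C}\exp(-L_t(\mu,\lambda))\big]$ that was just established for a deterministic time $t$ and an event $C \in \cF_t$.

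\textbf{Low-level form.} This is the quick part and uses no stopping time. Fix $n \in \N^*$, $x \in \R$ and $C \in \cF_n$. Starting from $\bP_{\lambda}(C) = \bE_{\mu}[\ind_{C}e^{-L_n(\mu,\lambda)}]$, I would restrict the (nonnegative) integrand to the event $\{L_n(\mu,\lambda) < x\}$, which can only decrease the expectation, and then bound $e^{-L_n(\mu,\lambda)} \geq e^{-x}$ on that event, yielding $\bP_{\lambda}(C) \geq e^{-x}\,\bP_{\mu}\big(C \cap \{L_n(\mu,\lambda) < x\}\big)$. A union bound gives $\bP_{\mu}(C \cap \{L_n < x\}) \geq \bP_{\mu}(C) - \bP_{\mu}(L_n(\mu,\lambda) \geq x)$, which is exactly \eqref{StrongConverse}.

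\textbf{High-level form.} Here two things must be shown: the identity $\bE_{\mu}[L_{\tau}(\mu,\lambda)] = \mathrm{KL}\big(\bP_\mu^{X_1,\dots,X_\tau}, \bP_\lambda^{X_1,\dots,X_\tau}\big)$, and then the bound by $\mathrm{kl}(\bP_{\mu}(C),\bP_{\lambda}(C))$. For the identity I would first extend the change of measure to the stopping time $\tau$: for $A \in \cF_\tau$, using that $A \cap \{\tau = t\} \in \cF_t$ and that $L_t = L_\tau$ on $\{\tau = t\}$, one writes $\bP_{\lambda}(A \cap \{\tau < \infty\}) = \sum_{t \geq 1}\bP_{\lambda}(A \cap \{\tau = t\}) = \sum_{t \geq 1}\bE_{\mu}[\ind_{A \cap \{\tau = t\}}e^{-L_t(\mu,\lambda)}] = \bE_{\mu}[\ind_{A \cap \{\tau < \infty\}}e^{-L_\tau(\mu,\lambda)}]$. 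Hence, on $\{\tau < \infty\}$, $e^{L_\tau(\mu,\lambda)}$ is a version of the Radon--Nikodym derivative $d\bP_\mu^{X_1,\dots,X_\tau}/d\bP_\lambda^{X_1,\dots,X_\tau}$, so by the very definition of the Kullback--Leibler divergence $\mathrm{KL}\big(\bP_\mu^{X_1,\dots,X_\tau}, \bP_\lambda^{X_1,\dots,X_\tau}\big) = \bE_{\mu}\big[\ln\!\big(d\bP_\mu/d\bP_\lambda\big)\big] = \bE_{\mu}[L_\tau(\mu,\lambda)]$ (both sides being $+\infty$ when $\bP_\mu(\tau = \infty) > 0$ or when absolute continuity fails, which is harmless for the inequality). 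For the second step I would invoke the data-processing (contraction) inequality for the KL divergence: since $C \in \cF_\tau$, the indicator $\ind_C$ is a measurable function of $(X_1,\dots,X_\tau)$, and its laws under $\bP_\mu$ and $\bP_\lambda$ are the Bernoulli distributions with parameters $\bP_\mu(C)$ and $\bP_\lambda(C)$; therefore $\mathrm{kl}(\bP_\mu(C),\bP_\lambda(C)) = \mathrm{KL}\big(\mathrm{Ber}(\bP_\mu(C)),\mathrm{Ber}(\bP_\lambda(C))\big) \leq \mathrm{KL}\big(\bP_\mu^{X_1,\dots,X_\tau}, \bP_\lambda^{X_1,\dots,X_\tau}\big)$, which closes \eqref{WeakConverse}.

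\textbf{Main obstacle.} The only delicate point is the passage from deterministic to random times in the high-level form: justifying that $e^{L_\tau(\mu,\lambda)}$ genuinely is the density on $\cF_\tau$ and handling the event $\{\tau = \infty\}$ cleanly (reading the identity as an inequality there, or simply noting that in the regime of interest for the lower bounds one has $\bP_\mu(\tau < \infty) = 1$). Everything else is either the one-line computation above or a standard invocation of the data-processing inequality; if one prefers to be self-contained, the bound $\mathrm{KL} \geq \mathrm{kl}(\bP_\mu(C),\bP_\lambda(C))$ can also be obtained directly from the change-of-measure identity by a variational/Jensen argument, but the contraction viewpoint is the cleanest.
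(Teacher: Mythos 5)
Your proof is correct and follows essentially the same route as the paper: the low-level form via the change-of-measure identity restricted to $\{L_n < x\}$, and the high-level form via the data-processing inequality applied to $\ind_C$. The only difference is that you spell out the extension of the change of measure to the stopping time $\tau$ (the Wald-likelihood / Radon--Nikodym identification on $\cF_\tau$), which the paper delegates to the reference \cite{GMS16}.
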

	
	\begin{proof} 
		The data-processing inequality states that, if $P$ and $Q$ are some probability distributions on the same measurable space $\cX$ and if $f:\cX\to\cY$ is a measurable function, then $\mathrm{KL}(P, Q)\geq\mathrm{KL}(P^f, Q^f)$, where $P^f$ (resp $Q^f$) denotes the push-forward measure of $P$ (resp. $Q$) by $f$. It is a simple consequence of Jensen's inequality applied to the function $x\mapsto x\log(x)$.
		The high-level form is a consequence of the data-processing inequality:
		\begin{equation*}\bE_{\mu}[L_{\tau}(\mu,\lambda)] = \mathrm{KL}\left(\bP_\mu^{X_1,\dots,X_\tau}, \bP_\lambda^{X_1,\dots,X_\tau}\right)
		\geq \mathrm{KL}\left(\bP_\mu^{\ind_C}, \bP_\lambda^{\ind_C}\right)= \mathrm{kl}\big(\bP_{\mu}(C) , \bP_{\lambda}(C)\big)\;,\end{equation*}
		see~\citep{GMS16} for details. The low-level form is even more elementary:
		\begin{eqnarray*}
			\bP_\lambda(C) & = & \bE_{\mu}\Big[\ind_{C} \exp\big(-L_t(\mu,\lambda)\big)\Big] \geq  \bE_{\mu}\Big[\ind_{C}\ind_{(L_t(\mu,\lambda) < x)} \exp\big(-L_t(\mu,\lambda)\big)\Big]\\
			& \geq & e^{-x}\,\bP_{\mu}\Big(C \cap \big(L_t(\mu,\lambda) < x\big)\Big) \\
			& \geq & e^{-x}\,\big[\bP_{\mu}(C) - \bP_{\mu}\left(L_n(\mu,\lambda) \geq x \right)\big]\;.
		\end{eqnarray*}
	\end{proof}
	
	The sample complexity lower bounds provided below illustrate the strengths and weaknesses of each form, that lead to the two statements in Theorem~\ref{thm:Gaussian} below. This result permits to identify the sample complexity of the testing problem with overlapping hypotheses studied in this section. Indeed, Theorem~\ref{thm:Gaussian} shows that any $\delta$-correct sequential procedure satisfies $\bE_{\mu}[\tau_\delta] \geq \frac{2\sigma^2}{(|\mu|+\epsilon)^2} \ln\left(\frac{1}{\delta}\right)$ for every $\mu$, in a regime of small values of $\delta$. In Section~\ref{subsec:PropGLRTGaussian} we exhibited a $\delta$-correct parallel GLRT that uses not more than this number of samples for small values of $\delta$. This proves that the sample complexity of the testing problem, that is the minimal number of samples needed by a $\delta$-correct test, is close to $\frac{2\sigma^2}{(|\mu|+\epsilon)^2} \ln\left(\frac{1}{\delta}\right)$  for small values of the risk $\delta$, and justifies that the parallel GLRT, which attains this sample complexity, is asymptotically optimal. 
	
	It is notable that the lower bound of Theorem~\ref{thm:Gaussian} is non-asymptotic for all $\mu$ that belong to a single hypothesis, which is obtained using the high-level form in Lemma~\ref{lem:CD}, but only asymptotic when $\mu$ is in the intersection $(-\epsilon,\epsilon)$, which requires the low-level form, as can be seen in the proof.
	
	\begin{theorem}\label{thm:Gaussian} Any $\delta$-correct sequential test satisfies 
		\begin{align*}
		\bullet\; &\forall \mu \notin (-\epsilon,\epsilon), \ \ \ \ \ \bE_{\mu}[\tau_\delta] \ \geq \frac{2\sigma^2}{\big(|\mu|+\epsilon\big)^2}\;\mathrm{kl}(\delta,1-\delta)\;,\\ 
		\bullet\;& \forall \mu \in \R, \ \ \liminf_{\delta \rightarrow 0} \frac{\bE_\mu[\tau_\delta]}{\ln(1/\delta)}  \geq \frac{2\sigma^2}{\big(|\mu|+\epsilon\big)^2}\;.
		\end{align*}
	\end{theorem}

	\begin{proof}
		We first treat the case $\mu \notin (-\epsilon,\epsilon)$. In the particular case considered in this section the samples $X_1,\dots,X_t$ are i.i.d. from a Gaussian distribution. Assuming that $\bE_{\mu}[\tau] < \infty$ and using Wald's lemma, the expected log-likelihood ratio takes the simple form
		\[\bE_{\mu}[L_{\tau}(\mu,\lambda)]  = \bE_{\mu}[\tau] \mathrm{KL}\big(\mathcal{N}(\mu,\sigma^2), \mathcal{N}(\lambda, \sigma^2)\big) = \bE_{\mu}[\tau] \frac{(\mu-\lambda)^2}{2\sigma^2}\;.\]
		Hence, the high-level change of measure inequality~\eqref{WeakConverse} translates into the following inequality, which obviously also holds when $\bE_{\mu}[\tau]=\infty$:
		\begin{equation}\bE_{\mu}[\tau] \frac{(\mu - \lambda)^2}{2\sigma^2} \geq \mathrm{kl}\big(\bP_{\mu}(C) \;, \bP_{\lambda}(C)\big) \;.\label{StrongCParticular}\end{equation}
		For $\mu < -\epsilon$,  choosing $\lambda=\epsilon$ and  $C = (\IHat = 2)$, which is such that $\bP_{\mu}(C) \leq \delta$ and $\bP_{\lambda}(C) \geq 1-\delta$, directly yields  
		\begin{equation*}\bE_{\mu}[\tau] \frac{\big(|\mu| + \epsilon\big)^2}{2\sigma^2} \geq \mathrm{kl}(\delta,1-\delta) \;.\label{StrongCParticular2}\end{equation*}
		Similarly, for $\mu > \epsilon$, we use $\lambda=-\epsilon$ and $C = (\IHat = 1)$ so as to obtain the same inequality.
		
		We now turn our attention to the (more interesting) case $\mu \in (-\epsilon,\epsilon)$. For every  $\beta >0$, we prove that  
		\begin{equation}\lim_{\delta \rightarrow 0} \bP_{\mu}\left( \tau_{\delta} \leq \frac{2\sigma^2(1-\beta)}{(|\mu| + \epsilon)^2}\ln \left(1/\delta\right)\right) = 0\;,\label{LB:Proba}\end{equation}
		which entails that 
		\[\liminf_{\delta \rightarrow 0} \frac{\bE_{\bm \mu}[\tau_\delta]}{\ln(1/\delta)} \geq \frac{2\sigma^2(1-\beta)}{(|\mu| + \epsilon)^2}\;.\]
		Letting $\beta$ tend to zero gives the second statement of Theorem~\ref{thm:Gaussian} for $\mu \in (-\epsilon,\epsilon)$. Note that this statement is also true for $\mu \notin (-\epsilon,\epsilon)$, as a consequence of the non-asymptotic lower bound obtained above and the fact that $\mathrm{kl}(\delta,1-\delta) \sim \ln(1/\delta)$ when $\delta$ goes to zero.
		
		We now prove \eqref{LB:Proba} for a fixed $\beta > 0$. Introducing 
		\[n_\delta := \left\lfloor\frac{2\sigma^2(1-\beta)}{\big(|\mu| + \epsilon\big)^2}\ln \frac{1}{\delta}\right\rfloor\;,\]
		and the event $C_\delta = (\tau_\delta \leq n_\delta)$, we need to prove that $ \bP_{\mu}(C_\delta) \to 0$ when $\delta \rightarrow 0$. To do so, we write 
		\[\bP_{\bm\mu}\left(C_\delta\right) = \bP_{\bm \mu}\left(C_\delta, \IHat = 1\right) + \bP_{\bm \mu}\left(C_\delta, \IHat = 2\right)\;,\]
		and prove that the two terms in the RHS tend to zero when $\delta$ goes to zero, using the low-level form of Lemma~\ref{lem:CD} with two different changes of distribution.

		\paragraph{$\bm{\lim_{\delta \rightarrow 0} \bP_{ \mu}\left(C_\delta, \IHat = 1\right) = 0}$}
		Choosing $\lambda = \epsilon$ yields that $\bP_{\lambda}(C_\delta,\IHat = 1) \leq \bP_{\lambda}(\IHat = 1) \leq \delta$.  
		Since $\tau_\delta$ is a stopping time, the event $C_\delta \cap (\IHat = 1)$ belongs to $\cF_{n_\delta}$. Hence, by Inequality~\eqref{StrongConverse}, for all $x \in \R$, 
		\[\delta \geq e^{-x}\left[\bP_\mu \left(C_\delta,\IHat = 1\right) - \bP_{\mu}\left(L_{n_\delta}(\mu,\epsilon) \geq x\right)\right]\;,\]
		which can be rewritten as 
		\[\bP_\mu \left(C_\delta,\IHat = 1\right) \leq \delta e^{x} + \bP_{\mu}\left(L_{n_\delta}(\mu,\epsilon) \geq x\right)\;.\]
		The choice $x = (1-\beta/2) \ln (1/\delta)$ yields 
		\begin{equation}\bP_\mu \left(C_\delta,\IHat = 1\right) \leq \delta^{\frac{\beta}{2}} + \bP_{\mu}\left(\frac{L_{n_\delta}(\mu,\epsilon)}{n_\delta} \geq \frac{1-\beta/2}{1-\beta} \frac{\left(|\mu| + \epsilon\right)^2}{2\sigma^2}\right)\;.\label{eq:Interm1}\end{equation}
		By the law of large numbers and the fact that $ n_{\delta} \to + \infty$ when $\delta\rightarrow 0$, 
		\[\frac{L_{n_\delta}(\mu,\epsilon)}{n_\delta} \stackrel{\delta\to 0}{\longrightarrow} \bE_\mu\left[ \ln \frac{\ell(X_1;\mu)}{\ell(X_1;\epsilon)} \right] = \mathrm{KL}\left(\bP_\mu^{X_1},\bP_\epsilon^{X_1}\right) = \frac{(\mu - \epsilon)^2}{2\sigma^2} < \frac{1-\beta/2}{1-\beta} \frac{\big(|\mu| + \epsilon\big)^2}{2\sigma^2\;}\;,\]
		and the RHS of Equation~\eqref{eq:Interm1} goes to zero. 
		
		\paragraph{$\bm{\lim_{\delta \rightarrow 0} \bP_{ \mu}\left(C_\delta, \IHat = 2\right) = 0}$}
		
		Using the exact same reasoning as above, this time with $\lambda = - \epsilon$ which is such that $\bP_\lambda(C_\delta,\IHat = 2) \leq \delta$, yields 
		\begin{equation}\bP_\mu \left(C_\delta,\IHat = 2\right) \leq \delta^{\frac{\beta}{2}} + \bP_{\mu}\left(\frac{L_{n_\delta}(\mu,-\epsilon)}{n_\delta} \geq \frac{1-\beta/2}{1-\beta} \frac{\left(|\mu| + \epsilon\right)^2}{2\sigma^2}\right)\;.\label{eq:Interm2}\end{equation}
		Writing, thanks to the law of large numbers,
		\[\frac{L_{n_\delta}(\mu,-\epsilon)}{n_\delta} \stackrel{\delta\to 0}{\longrightarrow} \frac{(\mu + \epsilon)^2}{2\sigma^2} < \frac{1-\beta/2}{1-\beta} \frac{\left(|\mu| + \epsilon\right)^2}{2\sigma^2}\;,\]
		proves that the RHS in \eqref{eq:Interm2} goes to zero. 
		
	\end{proof}

	\section{A $\delta$-CORRECT PARALLEL GLRT IN A BANDIT MODEL} \label{sec:PACbandit}
	
	In this section we formally introduce the multi-armed bandit model, under which we can study a broad family of \emph{active} tests of possibly overlapping hypotheses.   
	
	In this work, we consider bandit models where all arms belong to a canonical one-parameter exponential family
	\[\cF = \big\{ \nu_\theta \ \text{of density }  f_\theta(x) = \exp(\theta x - b(\theta)) \ \text{w.r.t.} \ \xi ; \;\;\theta \in \Theta\big\}\;,\]
	where $\xi$ is some reference measure, $\Theta\subseteq \R$ is an interval of natural parameters and $b : \Theta \rightarrow \R$ is convex and twice differentiable. For appropriate choices of $b$ and $\xi$, the class $\cF$ can be the set of Bernoulli distributions, Gaussian distribution with a known variance, Poisson, Exponential distributions, etc. We refer the reader to \citep{KLUCBJournal} for more details about the use of these exponential families for bandit problems. We just recall that distributions in a one-parameter exponential family can be alternatively parameterized by their means $\mu =\dot{b}(\theta)$. Hence, if $\cI = \dot{b}\left(\Theta\right)$, an exponential bandit model can be parameterized by $\bm \mu = (\mu_1,\dots,\mu_K) \in \cI^K$. Letting $\nu^{\mu}$ be the unique distribution that has mean $\mu\in \cI$, the Kullback-Leibler divergence between the distribution of mean $\mu$ and than of mean $\mu'$ is
	\[d(\mu,\mu') := \text{KL}\left(\nu^{\mu},\nu^{\mu'}\right) = {\mu}\left(\dot{b}^{-1}({\mu}) - \dot{b}^{-1}(\mu')\right) - b\left(\dot{b}^{-1}({\mu})\right) + b\left(\dot{b}^{-1}(\mu')\right)\;.\]
	Remarkable special cases are the family of Gaussian distributions with variance $\sigma^2$, with $d(\mu,\mu')=(\mu - \mu')^2/(2\sigma^2)$, and the family of Bernoulli distributions associated to the binary relative entropy $d(\mu,\mu') = \mu\ln\frac{\mu}{\mu'} + (1-\mu)\ln\frac{1-\mu}{1-\mu'}$ .
	
	The accumulation of observations in a bandit model is governed by a \emph{sampling rule} which specifies, at each round $t$, the arm $A_t$ that is selected; a sample $X_t$ from the distribution $\nu^{\mu_{A_t}}$ is subsequently collected. The sampling rule is sequential in that $A_t$ is only allowed to depend on the past observations $A_1,X_1,\dots,A_{t-1},X_{t-1}$ and possibly on some exogenous (independent) source of randomness.
	
	\paragraph{Fixed sampling rule: A sequential testing problem} 
	We consider $M$ regions  $\cR_1,\dots,\cR_M$ covering $\cI^K$: $\cR = \bigcup_{i=1}^M \cR_i = \cI^K$ (but that do not necessarily form a partition).
	For any given sampling rule, we are interested in building a sequential test $({\tau}_\delta,\IHat)$ that $\delta$-correctly tests the $M$ possibly overlapping hypotheses 
	\begin{equation}\cH_1 : \left(\bm\mu \in \cR_1\right) \ \ \cH_2 : \left(\bm\mu \in \cR_2\right) \ \ \dots \ \ \cH_M : \left(\bm\mu \in \cR_M\right)\label{TheTest}\end{equation}
	based on the stream of observations $A_1,X_1,A_2,X_2, \dots$. We denote by $N_a(t) = \sum_{s=1}^t\ind_{(A_s = a)}$ the number of selections of arm $a$ up to round $t$ and use the notation $\hat{\mu}_a(t) = \frac{1}{N_a(t)}\sum_{s=1}^t X_s\ind_{(A_s = a)}$ for the empirical mean of those observations.  
	
	Letting $\bm{\hat{\mu}}(t) = (\hat{\mu}_1(t),\dots, \hat{\mu}_K(t))$ be the vector of empirical means of the arms, for every $\bm{\lambda}=(\lambda_1,\dots,\lambda_K)\in\cR$ the log-likelihood ratio can be expressed in the following way:
	\begin{align}
	\ln \frac{\ell(X_1,\dots,X_t ; \bm{\hat{\mu}}(t))}{\ell(X_1,\dots,X_t ; \bm\lambda)} & =  \sum_{a=1}^KN_a(t) \Big[ \hat{\mu}_a(t)\left(\dot{b}^{-1}(\hat{\mu}_a(t)) - \dot{b}^{-1}(\lambda_a)\right) - b\left(\dot{b}^{-1}(\hat{\mu}_a(t))\right) + b\left(\dot{b}^{-1}(\lambda_a)\right)\Big]\nonumber\\
	& =  \sum_{a=1}^K N_a(t)d\left(\hat{\mu}_a(t),\lambda_a\right)\;.\label{eq:llrformula}
	\end{align}
	Hence, the parallel GLRT for testing \eqref{TheTest} is defined by
	\begin{eqnarray}\tau_\delta &=& \inf \left\{ t \in \N : \max_{a = 1,\dots,M} \inf_{\lambda \in \cR \backslash \cR_a} \sum_{j=1}^K N_j(t) d\left(\hat{\mu}_j(t),\lambda_j\right) > \beta(t,\delta)\right\}\;, \label{ParticularGLRT}\\
	\IHat &=& \underset{a = 1,\dots,M}{\text{argmax}} \ \inf_{\lambda \in \cR \backslash \cR_a} \sum_{j=1}^K N_j(\tau_\delta) d\left(\hat{\mu}_j(\tau_\delta),\lambda_j\right)\;.\nonumber 
	\end{eqnarray}
	In Lemma~\ref{lem:deltaCorrect} below, we provide a choice of the threshold function $\beta(t,\delta)$ for which the parallel GLRT is $\delta$-correct, whatever the sampling rule. The definition of $\beta$ requires to introduce the function 
	\begin{equation}\label{eq:T}
	\cT(x) ~=~ 2 \tilde h\left(\frac{h^{-1}(1+x) + \ln(2\zeta(2))}{2}\right)
	\end{equation}
	where, for $u \ge 1$, $h(u) = u - \ln u$ and for any $x \ge 0$ 
	\[
	\tilde h(x)
	~=~
	\begin{cases}
	e^{1/h^{-1}(x)} h^{-1}(x) & \text{if $x \ge h^{-1}(1/\ln (3/2))$,}
	\\
	(3/2) (x-\ln \ln (3/2))
	& \text{otherwise.}
	\end{cases}
	\]
	The function $\cT$ is easy to compute numerically. Its use for the construction of anytime, self-normalized confidence intervals is detailed in~\citep{KK18Mixture}, where the following approximations are derived: 
	$\cT (x) \simeq x + 4 \ln\big(1 + x + \sqrt{2x}\big)$ for $x\geq 5$ and $\cT(x) \sim x$ when $x$ is large. It can be noted that in the particular case of Gaussian bandits Lemma~\ref{lem:deltaCorrect} holds for a slightly smaller function $\cT$ that satisfies $\cT (x) \simeq x + \ln(x)$.

	\begin{lemma}\label{lem:deltaCorrect} For any sampling rule, the parallel GLRT test $(\tau_\delta,\IHat)$ using the threshold function  
		\[\beta(t,\delta) = 3 K \ln(1 + \ln t) + K\cT\left(\frac{\ln(1/\delta)}{K}\right)\label{UniversalThreshold}\]
		is $\delta$-correct: for all $\bm\mu \in \cR$, $\bP_{\bm\mu}\left(\tau_\delta < \infty, \bm\mu \notin \cR_{\IHat} \right) \leq \delta$.
	\end{lemma}

	\begin{proof} Let $\bm\mu \in \cR$. The probability of error  $\bP_{\bm\mu}\left(\tau_\delta < \infty, \bm\mu \notin \cR_{\IHat} \right)$ can be upper-bounded by
		\begin{align*}
		& \bP_{\bm\mu}\left(\exists t, \exists i :\bm\mu \notin \cR_i , \inf_{\bm \lambda \in \cR \backslash \cR_i} \ln \frac{\ell(X_1,\dots,X_t ; \hat{\bm\mu}(t))}{\ell(X_1,\dots,X_t ;\bm \lambda)} > \beta(t,\delta)\right)\\
		& \leq  \bP_{\bm\mu}\left(\exists t, \exists i : \bm\mu \in \cR\backslash\cR_i  , \ \ln \frac{\ell(X_1,\dots,X_t ; \hat{\bm\mu}(t))}{\ell(X_1,\dots,X_t ;\bm \mu)} > \beta(t,\delta)\right) \\
		& \leq  \bP_{\bm\mu}\left(\exists t, \ln \frac{\ell(X_1,\dots,X_t ; \hat{\bm\mu}(t))}{\ell(X_1,\dots,X_t ;\bm \mu)} > \beta(t,\delta)\right)\;.
		\end{align*}
		Hence, to prove the $\delta$-correctness of the parallel GLRT, it is sufficient to control the deviations of the quantity  $\ln \frac{\ell(X_1,\dots,X_t ; \hat{\bm\mu}(t))}{\ell(X_1,\dots,X_t ;\bm \mu)}$ uniformly over time. For this purpose, in the case of an exponential family bandit model, one can use the deviations bounds recently obtained by~\cite{KK18Mixture}. More precisely, thanks to Equation~\eqref{eq:llrformula},
		\begin{align*}
		& \bP_{\bm\mu}\left(\tau_\delta < \infty, \bm\mu \notin \cR_{\IHat} \right)  \leq  \bP_{\bm\mu}\left(\exists t \in \N : \sum_{a=1}^K N_a(t)d(\hat{\mu}_a(t),\mu_a) > \beta(t,\delta)\right) \\
		& \leq  \bP_{\bm\mu}\left(\exists t \in \N : \!\sum_{a=1}^K \! N_a(t)d(\hat{\mu}_a(t),\mu_a) \! > 3\!\sum_{a=1}^K \!\ln(1+\ln N_a(t)) + K\cT\left(\frac{\ln(1/\delta)}{K}\right)\right),
		\end{align*}
		which is upper bounded by $\delta$ according to~Theorem 14 in \citep{KK18Mixture}. In the Gaussian case, Corollary 10 in the same paper justifies the use of a slightly smaller function~$\cT$.  
	\end{proof}
	
	\begin{remark}\label{rem:Gaussian} The sequential testing problem studied in Section~\ref{sec:twogaussians} can be viewed as a one-armed bandit problem with a Gaussian arm, for which $d(x,y) = (x-y)^2/(2\sigma^2)$. It is  easy to check that the expression in \eqref{ParticularGLRT} coincides with that in \eqref{strategy} for the test studied in Section~\ref{sec:twogaussians}. Therefore, Lemma~\ref{lem:deltaCorrect} can be applied with $K=1$ to justify the chosen threshold.  
	\end{remark}

	\paragraph{Active testing: Optimizing the test duration} 
	Lemma~\ref{lem:deltaCorrect} provides a universal choice of threshold for which the parallel GLRT is $\delta$-correct, whatever the sampling rule.
	Obviously, however, in order to minimize the number of samples $\tau_\delta$ needed before stopping the test, the sampling rule also plays a crucial role. For example, if for $K=3$ the question is whether $\mu_2 > \mu_3$, there is no point in sampling from $\nu_{1}$. 
	
	In an \emph{active} identification problem, the goal is precisely to optimize the stopping rule \emph{and} the sampling rule in order to reach the minimal number of samples, while preserving the $\delta$-correctness of the decision. The sample complexity of some active identification problems has been exhibited when the different regions form a \emph{partition} \citep{GK16,KKG18Murphy,KK18Mixture,Juneja19} but the case of overlapping hypotheses is known to be more complex \citep{Degenne18Sticky}. 
	
	In the rest of the paper, we focus on a particular example of active identification: $\epsilon$-Best Arm Identification for which we investigate the optimal sample complexity. As we show below, a \emph{lower-bound} analysis of the sample complexity indirectly leads to an efficient sampling rule.

	\section{SAMPLE COMPLEXITY LOWER BOUNDS FOR ACTIVE TESTING: $\epsilon$-BEST ARM IDENTIFICATION} \label{sec:lbPACBAI}
	
	$\epsilon$-Best-Arm Identification (BAI) was first introduced by \cite{MannorTsi04,EvenDaral06} in the bandit literature. Given a risk parameter $\delta \in (0,1]$ and an accuracy parameter $\epsilon>0$, the goal is to find as quickly as possible an element of the set $\cA_{\epsilon}(\bm\mu) := \{ a: \mu_a \geq \max_{i} \mu_i - \epsilon\}$ of $\epsilon$-optimal arms, with probability at least $1-\delta$, by adaptively sampling the arms. A slightly different formulation with an \emph{indifference zone} has been studied in the ranking and selection community (see, e.g., \cite{KimNelson01}) in which correctness guarantees are provided only when there is a unique $\epsilon$-best arm (i.e. for $\bm\mu$ such that $|\cA_{\epsilon}(\bm\mu)| = 1$). In contrast, we highlight that in $\epsilon$-BAI, in the presence of multiple $\epsilon$-best arms (i.e., overlapping hypotheses), our goal is to identify one of those. 
	
	A possible motivation to study $\epsilon$-best arm identification stems from its application to A/B/C Testing: In this context, each arm models some version of a company's website. For each visitor, the company adaptively selects one of the versions and checks whether a conversion occurs. The goal is to identify one version with the highest probability of conversion. In the presence of many near optimal versions, it makes sense to relax the problem of finding a near-optimal version, as it leads to shorter tests.    
	
	$\epsilon$-best arm identification can be regarded as a particular example of \emph{active testing} of the hypotheses \[\cH_1 = \left(\mu_1 > \max_{i} \mu_i - \epsilon\right) \ \ \dots \ \  \cH_K = \left(\mu_K > \max_i \mu_i - \epsilon\right)\]
	based on adaptively sampling the marginals of $f_{\bm\mu} = f_{\mu_1} \otimes \dots \otimes f_{\mu_K}$. 
	A strategy for $\epsilon$-BAI not only consists in a sequential test for those hypotheses, that is a pair $(\tau,\hat{\imath})$ of stopping and recommendation rule, but also on a sampling rule $(A_t)_t$ indicating which arm is selected at round $t$. As explained in Section~\ref{sec:PACbandit}, we consider the case where the arms belongs to a one-dimension canonical exponential family and their distribution is therefore parameterized by their means that belong to some interval $\cI = (\mu^-, \mu^+)$. 
	
	\begin{definition} A $(\epsilon,\delta)$-PAC strategy for $\epsilon$-best arm identification is a triple $(A_t^\delta,\tau_\delta,\IHat)$, where $A_t^\delta$ is $\mathcal{F}_{t-1}$-measurable for each $t\geq 1$, $\tau$ is a stopping time relative to the filtration  $(\mathcal{F}_t)_t$, and $\IHat$ is a $\mathcal{F}_\tau$-measurable random variable, such that
		\[\forall \bm \mu \in \cI^K, \ \ \ \bP_{\bm\mu}\big(\tau_\delta < \infty, \IHat \notin \cA_\epsilon(\bm\mu)\big)\leq \delta\;.\] 
		An $\epsilon$-PAC strategy is a family of strategies $\pi_\delta = (A_t^\delta,\tau_\delta,\IHat)$ such that, for all $\delta$, $\pi_\delta$ is $(\epsilon,\delta)$-PAC.
	\end{definition}
	
	The goal in $\epsilon$-BAI is to build an $(\epsilon,\delta)$-PAC strategy such that on every bandit instance $\bm\mu$, the expected number of samples needed before stopping $\bE_{\bm\mu}[\tau_\delta]$, is as close as possible to the \emph{sample complexity}, which is the minimal number of samples needed by any ($\epsilon,\delta$)-PAC algorithm before stopping. In this section, we provide lower bounds on the sample complexity of $\epsilon$-Best Arm Identification.  
	
	\subsection{The Characteristic Time of $\epsilon$-Best Arm Identification} All the lower bounds that we propose feature a quantity $T^*_\epsilon(\bm\mu)$ which we term the characteristic time of~$\mu$.

	\begin{definition} The characteristic time $T^*_\epsilon(\bm\mu)$ is defined by:
		\[T^*_\epsilon(\bm\mu)^{-1} = \sup_{w \in \Sigma_K} \max_{a \in \cA_\epsilon(\bm\mu)} \min_{b \neq a} \inf_{(\lambda_a,\lambda_b) : \lambda_a \leq \lambda_b - \epsilon} \Big[w_a d(\mu_a,\lambda_a) + w_b d(\mu_b,\lambda_b)\Big],\]
		where $\Sigma_K = \{w \in [0,1]^K : \sum_{i} w_i = 1\}$ is the set of probability vectors.
	\end{definition}
	
	It can be observed that the quantity $T^*_0(\bm\mu)$ coincides with the quantity $T^*(\bm\mu)$ introduced by \cite{GK16} as the characteristic time for the best arm identification problem. 
	
	In order to illustrate the dependency of $T_\epsilon^*(\bm\mu)$ on the means $\bm\mu$, we first consider the Gaussian case. Introducing the notation $\bm\mu^{a,\epsilon}$ for the bandit instance such that $\mu^{a,\epsilon}_i = \mu_i$ for all $i\neq a$ and $\mu^{a,\epsilon}_a = \mu_a + \epsilon$, it can be shown (see Appendix~\ref{proof:GaussianComputation}) that
	\begin{equation}T^*_\epsilon(\bm\mu) = \min_{a \in \cA_\epsilon(\bm\mu)} T_0\left(\bm\mu^{a,\epsilon}\right) = T_0\left(\bm\mu^{a^*,\epsilon}\right),\label{GaussianSolved}\end{equation}
	where $a^* \in \text{argmax}_a \mu_a$ is an optimal arm. Hence it is sufficient to be able to compute the characteristic time for $\epsilon=0$, for which \cite{GK16} provide an efficient algorithm. Besides, \cite{GK16} propose an approximation for $T_0^*(\bm\mu)$ from which it directly follows that
	\[\sum_{a\neq a^*}^K \frac{2\sigma^2}{(\mu_{a^*}+\epsilon - \mu_a)^2}\leq T^*_\epsilon(\bm\mu) \leq 2 \times \sum_{a\neq a^*}^K \frac{2\sigma^2}{(\mu_{a^*}+\epsilon - \mu_a)^2}\;.\]
	Finally, there is a simple expression in the two armed-case: $T^*_\epsilon(\bm \mu) = \frac{8\sigma^2}{(|\mu_1 - \mu_2| + \epsilon)^2}$. 
	Explicit expressions for general (non-Gaussian) two-armed bandits can be found in Section~\ref{subsec:TwoArmsImproved}. However, beyond 2 arms there is no closed-form expression for the characteristic time $T^*_\epsilon(\bm\mu)$ and we refer to Section~\ref{sec:OptimalStrategy} for a discussion on its efficient computation. 
	
	\subsection{A First Non-Asymptotic Lower Bound}\label{subsec:ComplexityTerm}
	
	To illustrate the difficulty of extending the lower bound technique of~\cite{GK16} to $\epsilon$-BAI, we begin with a sub-optimal but simple sample complexity lower bound, obtained using the high-level change of distribution argument presented in Lemma~\ref{lem:CD}. 
	
	\begin{theorem}\label{thm:LBGeneral} For any $(\epsilon,\delta)$-PAC strategy, one has 
		\[\bE_{\bm\mu}[\tau_\delta] \geq  {T^*_\epsilon(\bm\mu)}\left[\frac{1-\delta}{|\cA_{\epsilon}(\bm\mu)|}\ln\left(\frac{1}{\delta}\right) - \ln(2)\right]\;.\]
	\end{theorem}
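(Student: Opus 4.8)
The plan is to apply the high-level change of distribution \eqref{WeakConverse} from Lemma~\ref{lem:CD} repeatedly, once for each arm in $\cA_\epsilon(\bm\mu)$, and then average the resulting inequalities. Fix a bandit instance $\bm\mu$ and an $(\epsilon,\delta)$-PAC strategy. For each $a \in \cA_\epsilon(\bm\mu)$, consider the event $C_a = (\IHat = a)$, which lies in $\cF_{\tau_\delta}$, and for each competing arm $b \neq a$ construct an alternative instance $\bm\lambda^{(a,b)}$ obtained from $\bm\mu$ by moving only coordinates $a$ and $b$ to values $(\lambda_a,\lambda_b)$ with $\lambda_a \le \lambda_b - \epsilon$; on such an instance $a \notin \cA_\epsilon(\bm\lambda^{(a,b)})$, so $(\epsilon,\delta)$-PAC-ness forces $\bP_{\bm\lambda^{(a,b)}}(C_a) \le \delta$.

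The key computation is to bound the expected log-likelihood ratio. Using the decomposition \eqref{eq:llrformula} together with Wald's identity (the sampling rule is adapted, so $N_j(\tau_\delta)$ is a stopping-time-type sum and $\bE_{\bm\mu}[N_j(\tau_\delta)]$ is finite on the relevant regime), one gets $\bE_{\bm\mu}[L_{\tau_\delta}(\bm\mu,\bm\lambda^{(a,b)})] = \bE_{\bm\mu}[N_a(\tau_\delta)]\,d(\mu_a,\lambda_a) + \bE_{\bm\mu}[N_b(\tau_\delta)]\,d(\mu_b,\lambda_b)$ since the two instances differ only in coordinates $a$ and $b$. Then \eqref{WeakConverse} applied to $C_a$ gives
\[
\bE_{\bm\mu}[N_a(\tau_\delta)]\,d(\mu_a,\lambda_a) + \bE_{\bm\mu}[N_b(\tau_\delta)]\,d(\mu_b,\lambda_b) \ \geq\ \mathrm{kl}\big(\bP_{\bm\mu}(C_a),\delta\big)\,,
\]
where we use that $\mathrm{kl}(x,\cdot)$ is increasing past its minimum to replace $\bP_{\bm\lambda^{(a,b)}}(C_a)$ by the upper bound $\delta$. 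Taking the infimum over $(\lambda_a,\lambda_b)$ with $\lambda_a \le \lambda_b - \epsilon$ and then bounding $\mathrm{kl}(\bP_{\bm\mu}(C_a),\delta) \geq \bP_{\bm\mu}(C_a)\ln(1/\delta) - \ln 2$ (the standard estimate $\mathrm{kl}(p,q) \ge p\ln(1/q) - \ln 2$), and then taking the minimum over $b \neq a$, yields a lower bound on a weighted combination of the $\bE_{\bm\mu}[N_j(\tau_\delta)]$ in terms of $\bP_{\bm\mu}(C_a)\ln(1/\delta) - \ln 2$. Dividing through by $\bE_{\bm\mu}[\tau_\delta] = \sum_j \bE_{\bm\mu}[N_j(\tau_\delta)]$ and interpreting $w_j = \bE_{\bm\mu}[N_j(\tau_\delta)]/\bE_{\bm\mu}[\tau_\delta]$ as a point of $\Sigma_K$, the left-hand side is at most $\bE_{\bm\mu}[\tau_\delta]^{-1}$ times the supremum defining $T^*_\epsilon(\bm\mu)^{-1}$ — but only after we also maximize over $a$, which we are not yet free to do.

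To finish, I would sum the $|\cA_\epsilon(\bm\mu)|$ inequalities (one per $a \in \cA_\epsilon(\bm\mu)$) rather than pick the best $a$. Averaging, the right-hand side becomes $\frac{1}{|\cA_\epsilon(\bm\mu)|}\big[\ln(1/\delta)\sum_a \bP_{\bm\mu}(C_a) - |\cA_\epsilon(\bm\mu)|\ln 2\big]$, and since the events $(\IHat = a)$ over $a \in \cA_\epsilon(\bm\mu)$ are disjoint and their union is exactly $(\IHat \in \cA_\epsilon(\bm\mu))$, which has probability at least $1-\delta$ (on $\{\tau_\delta < \infty\}$, which carries all the mass in the regime where $\bE_{\bm\mu}[\tau_\delta] < \infty$), we have $\sum_a \bP_{\bm\mu}(C_a) \geq 1-\delta$. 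On the left-hand side, each term is $\ge$ the corresponding $\max_{a} \min_{b\neq a} \inf \big[\cdots\big]$ evaluated at the common weight vector $w$, hence the average is also $\le \bE_{\bm\mu}[\tau_\delta]^{-1} \cdot T^*_\epsilon(\bm\mu)^{-1}$. Rearranging gives the claimed bound. The main obstacle is the bookkeeping around Wald's identity and the case $\bE_{\bm\mu}[\tau_\delta] = \infty$ (where the statement is trivial), plus making sure the averaging-over-$a$ trick is what produces the $|\cA_\epsilon(\bm\mu)|$ in the denominator rather than a cleaner bound; the information-theoretic core is routine given Lemma~\ref{lem:CD}.
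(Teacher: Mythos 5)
Your proposal is correct and follows the paper's argument in all essentials: the high-level change of measure \eqref{WeakConverse} of Lemma~\ref{lem:CD}, Wald's identity applied to an alternative instance perturbing only the two coordinates $a$ and $b$, the identification $w_j = \bE_{\bm\mu}[N_j(\tau_\delta)]/\bE_{\bm\mu}[\tau_\delta] \in \Sigma_K$ so that the resulting weighted divergence is dominated by $T^*_\epsilon(\bm\mu)^{-1}$, and the estimate $\mathrm{kl}(p,q)\ge p\ln(1/q)-\ln 2$. The only (immaterial) difference is that you average the $|\cA_{\epsilon}(\bm\mu)|$ inequalities and invoke $\sum_{a}\bP_{\bm\mu}(\IHat=a)\ge 1-\delta$, whereas the paper pigeonholes a single $a$ with $\bP_{\bm\mu}(\IHat=a)\ge(1-\delta)/|\cA_{\epsilon}(\bm\mu)|$ and applies the change of measure once --- both routes give the identical bound; the one small repair needed in yours is that for arms with $\bP_{\bm\mu}(\IHat=a)<\delta$ the monotonicity-of-$\mathrm{kl}$ justification does not apply, so you should instead pass directly through $\mathrm{kl}(p,q)\ge p\ln(1/q)-\ln 2\ge p\ln(1/\delta)-\ln 2$, valid whenever $q\le\delta$.
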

	
	\begin{proof} Using the $\delta$-PAC property and the pigeonhole principle, there exists $a\in\cA_{\epsilon}(\bm \mu)$ such that \[\bP_{\bm\mu}(\IHat = a) \geq \frac{1-\delta}{|\cA_{\epsilon}(\bm\mu)|}.\] 
		By definition of $T^*_\epsilon(\bm\mu)$, for the above choice of $a\in \cA_{\epsilon}(\bm \mu)$, there exists ${b}\neq a$ and $\overline{\bm\lambda}$ such that $\overline{\lambda}_a < \overline{\lambda}_b - \epsilon$:  
		\begin{equation}{T^*_\epsilon(\bm\mu)^{-1}} \geq \frac{\bE_{\bm\mu}[N_a(\tau_\delta)]}{\bE_{\bm\mu}[\tau_\delta]}d(\mu_a,\overline{\lambda}_a) + \frac{\bE_{\bm \mu}[N_b(\tau_\delta)]}{\bE_{\bm\mu}[\tau_\delta]}d(\mu_b,\overline{\lambda}_b)\;.\label{eq:StillTrue}\end{equation}
		Indeed, fixing $\overline{w}_a := \bE_\mu[N_a(\tau_\delta)] / \bE_\mu[\tau_\delta]$ and defining 
		\[F_a(w,\bm\mu) = \min_{b \neq a} \inf_{\lambda_a < \lambda_b - \epsilon} [w_a d(\mu_a,\lambda_a) + w_b d(\mu_b,\lambda_b)]\;,\]
		observe that 
		\[\sup_{w \in \Sigma_K} F_a(w,\bm \mu) \geq F_a(\overline{w}, \bm\mu)\;.\]
		Letting $b, \overline{\lambda}_a,\overline{\lambda}_b$ be the minimizer in the definition of $F_a(\overline{w},\bm\mu)$ yields 
		\[\max_{a \in \cA_\epsilon}\sup_{w \in \Sigma_K} F_a(w,\bm \mu) \geq \overline{w}_a d(\mu_a,\overline{\lambda}_a) + \overline{w}_b d(\mu_b,\overline{\lambda}_b)\;,\]
		and the left hand side of this inequality is $T_\epsilon^{-1}(\bm\mu)$. 
		
		We now assume $\bE[\tau_\delta] < \infty$ (otherwise any lower bound on the number of samples of that algorithm obviously holds). Using Wald's Lemma, the expression of the expected log-likelihood ratio between $\bm\mu$ and $\overline{\bm \lambda}$ is 
		\[\bE_{\bm\mu}\left[L_{\tau_\delta}(\bm\mu,\overline{\bm \lambda})\right] =\bE_{\bm\mu}[N_a(\tau_\delta)]d(\mu_a,\overline{\lambda}_a) + \bE_{\bm \mu}[N_b(\tau_\delta)]d(\mu_b,\overline{\lambda}_b)\;.\]
		Therefore, one can use Inequality~\eqref{eq:StillTrue} and the high-level form of Lemma~\ref{lem:CD} to write
		\begin{align*}\frac{\bE_{\bm\mu}[\tau_\delta]}{T^*_\epsilon(\bm\mu)} &\geq \bE_{\bm\mu}\left[L_{\tau_\delta}(\bm\mu,\overline{\bm \lambda})\right] \;\geq \; \mathrm{kl}(\bP_{\bm\mu}(\IHat = a), \bP_{\overline{\bm \lambda}}(\IHat = a))\\
		&\geq \mathrm{kl}\left(\frac{1-\delta}{|\cA_{\epsilon}(\bm\mu)|} ,\delta\right)  \;\geq \; \frac{1-\delta}{|\cA_{\epsilon}|}\ln\left(\frac{1}{\delta}\right) - \ln(2)\;,\end{align*}
		where the last inequality follows from the inequality $\mathrm{kl}(x,y) \geq x\ln(1/y) - \ln(2)$. 
	\end{proof}
	
	\subsection{A Tighter Lower Bound for Converging Strategies}
	
	The lower bound obtained in Theorem~\ref{thm:LBGeneral} could incorrectly suggest that the sample complexity would be of order  $\frac{T^*_\epsilon(\bm\mu)}{|\cA_\epsilon(\bm\mu)|} \ln\left(\frac{1}{\delta}\right)$. As we will see, the factor $|\cA_\epsilon(\bm\mu)|$ in the denominator is not correct: this bound is tight  only for values of $\bm\mu$ with a unique $\epsilon$-best arm. We now provide a tighter lower bound, which comes with two shortcomings. First, it is asymptotic: it relies on the low-level form of change of distribution presented in Lemma~\ref{lem:CD}. Second, it requires an additional assumption on the sampling rule, that needs to be \emph{converging}. 
	
	\begin{definition}\label{def:Converging}A sampling rule $A_t^\delta$ is said to be \emph{converging} if it does not depend on $\delta$ (that is, $A_t^\delta=A_t$) and for every $\bm \mu$, there exists $\overline{w}(\bm\mu) \in \Sigma_K$ such that,  $\bP_{\bm\mu}$- almost surely, 
		\[\forall a= 1, \dots,K, \ \ \ \frac{N_a(t)}{t} \underset{t\rightarrow \infty}{\longrightarrow} \overline{w}_a(\bm \mu)\;.\]
	\end{definition}
	
	\begin{theorem}\label{thm:LBConvergent} For any $\epsilon$-PAC family of converging strategies, one has 
		\[\liminf_{\delta \rightarrow 0}\frac{\bE_{\bm \mu}[\tau_\delta]}{\ln(1/\delta)} \geq T^*_\epsilon(\bm \mu)\;.\] 
	\end{theorem}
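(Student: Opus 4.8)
The plan is to adapt the low-level change-of-distribution argument from the proof of Theorem~\ref{thm:Gaussian} (the case $\mu\in(-\epsilon,\epsilon)$) to the bandit setting, exploiting the \emph{converging} assumption to replace the law of large numbers step by the almost-sure convergence of the empirical proportions $N_a(t)/t$. Fix a bandit instance $\bm\mu$ and abbreviate $T^* = T^*_\epsilon(\bm\mu)$ and $\overline{\bm w} = \overline{\bm w}(\bm\mu)\in\Sigma_K$ the limiting allocation of the (fixed) sampling rule. Fix $\beta\in(0,1)$ and set $n_\delta = \lfloor T^*(1-\beta)\ln(1/\delta)\rfloor$, and $C_\delta = (\tau_\delta \le n_\delta)$. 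Exactly as before, it suffices to show $\bP_{\bm\mu}(C_\delta)\to 0$, since this implies $\liminf_{\delta\to 0}\bE_{\bm\mu}[\tau_\delta]/\ln(1/\delta)\ge T^*(1-\beta)$ and then letting $\beta\to 0$ gives the claim.

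The decomposition is now over the recommendation: $\bP_{\bm\mu}(C_\delta) = \sum_{a=1}^K \bP_{\bm\mu}(C_\delta, \IHat = a)$. It suffices to show each term vanishes; for $a\notin\cA_\epsilon(\bm\mu)$ this is immediate from the $(\epsilon,\delta)$-PAC property (the whole event $(\IHat=a)$ has probability $\le\delta$). For $a\in\cA_\epsilon(\bm\mu)$, I would use the definition of $T^*$: there exists an alternative $\overline{\bm\lambda}$ (depending on $a$, $\bm\mu$, and $\overline{\bm w}$) with $\overline{\lambda}_a \le \overline{\lambda}_b - \epsilon$ for some $b\neq a$, differing from $\bm\mu$ only in coordinates $a$ and $b$, such that under $\overline{\bm\lambda}$ the arm $a$ is \emph{not} $\epsilon$-optimal, hence $\bP_{\overline{\bm\lambda}}(\IHat = a)\le\delta$; and moreover
\[
\overline{w}_a\, d(\mu_a,\overline{\lambda}_a) + \overline{w}_b\, d(\mu_b,\overline{\lambda}_b) \;\le\; \frac{1}{T^*(1-\beta/2)}\cdot(1-\beta/2)\;<\;\frac{1}{T^*(1-\beta)}\cdot(1-\beta/2)
\]
— more carefully, one picks $\overline{\bm\lambda}$ achieving (up to a factor $1-\beta/4$, say) the value $\max_{a\in\cA_\epsilon}\min_{b\neq a}\inf_{\lambda}[\overline w_a d(\mu_a,\lambda_a)+\overline w_b d(\mu_b,\lambda_b)] \ge (T^*)^{-1}$ is the wrong direction, so instead I take the \emph{infimum}-achieving pair for the chosen $a$, giving $\overline{w}_a d(\mu_a,\overline\lambda_a)+\overline w_b d(\mu_b,\overline\lambda_b)$ equal to $\inf_\lambda[\cdots]$, and this quantity is $\le (T^*)^{-1}$ by the $\max/\sup$ structure — that is the key inequality. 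Then, applying the low-level form~\eqref{StrongConverse} with this $\overline{\bm\lambda}$, the event $C_\delta\cap(\IHat=a)\in\cF_{n_\delta}$, and $x = (1-\beta/2)\ln(1/\delta)$:
\[
\bP_{\bm\mu}(C_\delta,\IHat=a) \;\le\; \delta^{\beta/2} + \bP_{\bm\mu}\!\left(L_{n_\delta}(\bm\mu,\overline{\bm\lambda}) \ge (1-\beta/2)\ln(1/\delta)\right).
\]
Since $\overline{\bm\lambda}$ and $\bm\mu$ differ only in coordinates $a,b$, by \eqref{eq:llrformula} we have $L_{n_\delta}(\bm\mu,\overline{\bm\lambda}) = N_a(n_\delta) d(\mu_a,\overline\lambda_a) + N_b(n_\delta) d(\mu_b,\overline\lambda_b)$, and dividing by $n_\delta$ and using $N_a(n_\delta)/n_\delta \to \overline w_a$, $N_b(n_\delta)/n_\delta\to\overline w_b$ $\bP_{\bm\mu}$-a.s. (valid because $n_\delta\to\infty$), the normalized LLR converges a.s. to $\overline w_a d(\mu_a,\overline\lambda_a) + \overline w_b d(\mu_b,\overline\lambda_b) \le (T^*)^{-1}$, which is strictly less than the threshold $\frac{1-\beta/2}{1-\beta}(T^*)^{-1}$ that $x/n_\delta$ approaches. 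Hence that probability tends to $0$, and so does each term.

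The main obstacle is the bookkeeping around the $\sup_w$ in the definition of $T^*$: the converging sampling rule delivers a \emph{fixed} allocation $\overline{\bm w}$ which need not be the maximizer, so I only get $\max_{a\in\cA_\epsilon}\min_{b\neq a}\inf_\lambda[\overline w_a d(\mu_a,\lambda_a)+\overline w_b d(\mu_b,\lambda_b)] \le \sup_w[\cdots] = (T^*)^{-1}$, and I must be careful that this inequality goes the \emph{right} way — it does, since $\overline{\bm w}$ is a feasible point of the $\sup$, so the expression at $\overline{\bm w}$ is a \emph{lower} bound on $(T^*)^{-1}$... which is the wrong direction. The resolution is that I am free to choose \emph{which} $a\in\cA_\epsilon$ to use: I pick the $a$ that the algorithm actually recommends (conditioning on $\IHat=a$), and for that $a$ the relevant quantity is $\min_{b\neq a}\inf_\lambda[\overline w_a d(\mu_a,\lambda_a)+\overline w_b d(\mu_b,\lambda_b)]$, which is $\le \max_{a'\in\cA_\epsilon}\min_{b\neq a'}\inf_\lambda[\overline w_{a'}d+\overline w_b d] \le \sup_w \max_a\min_b\inf_\lambda[\cdots] = (T^*)^{-1}$. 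So the chain is correct, but presenting it cleanly requires care; this is exactly the place where the high-level information-theoretic argument fails (it cannot localize to a single recommended $a$ while keeping the allocation fixed), which is why the theorem is only asymptotic and needs the converging hypothesis.
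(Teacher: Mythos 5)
Your proposal is correct and follows essentially the same route as the paper: the same $n_\delta$, the same decomposition over the recommended arm, the same choice of a two-coordinate alternative $\overline{\bm\lambda}$ built from the limiting allocation $\overline{\bm w}$ (with the value at $\overline{\bm w}$ bounded by $(T^*_\epsilon)^{-1}$ since $\overline{\bm w}$ is feasible for the sup), and the same application of the low-level inequality~\eqref{StrongConverse} with $x=(1-\beta/2)\ln(1/\delta)$. One small inaccuracy: $L_{n_\delta}(\bm\mu,\overline{\bm\lambda})$ is \emph{not} equal to $N_a(n_\delta)d(\mu_a,\overline\lambda_a)+N_b(n_\delta)d(\mu_b,\overline\lambda_b)$ (formula~\eqref{eq:llrformula} holds with $\hat{\bm\mu}(t)$, not $\bm\mu$, in the numerator); the a.s. limit of $L_t/t$ you need still holds, but it requires combining the convergence of the proportions with the strong law of large numbers applied to the per-arm sums, as the paper does.
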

	
	\begin{proof} To prove Theorem~\ref{thm:LBConvergent}, it is sufficient to prove that for every $\beta > 0$, 
		\begin{equation}
		\lim_{\delta \rightarrow 0} \bP\left(\tau_\delta \leq (1-\beta) T^*(\bm\mu)\ln\left(\frac{1}{\delta}\right) \right) = 0\;.
		\label{ToProve2}
		\end{equation}
		Indeed, the conclusion follows by Markov inequality and letting $\beta$ go to zero. We now fix $\beta > 0$ and introduce 
		
		\vspace{-0.3cm}
		
		\[n_\delta : = \left\lfloor (1-\beta) T^*_\epsilon(\bm\mu) \ln\left(\frac{1}{\delta}\right)\right\rfloor\;.\]
		Defining $\cC_\delta =(\tau_\delta \leq n_\delta)$, we establish that $\lim_{\delta \rightarrow 0} \bP_{\bm\mu}(C_\delta) = 0$.
		
		First, using that the strategy is $\delta$-correct, one can write 
		\begin{eqnarray*}\bP_{\bm\mu}(C_\delta) &=& \bP_{\bm\mu}\left(C_\delta, \IHat \in \cA_{\epsilon}(\bm\mu)\right) + \bP\left( C_\delta, \IHat \notin \cA_\epsilon(\bm\mu)\right) \\
			& \leq & \sum_{a \in \cA_\epsilon(\bm\mu)}\bP_{\bm\mu}(C_\delta, \IHat = a) + \delta\;.
		\end{eqnarray*}
		To conclude, it is sufficient to prove that $\forall a \in \cA_{\epsilon}(\bm\mu)$, $\lim_{\delta\rightarrow 0} \bP_{\bm\mu}(C_\delta, \IHat = a) = 0$.
		
		Fix $a \in \cA_{\epsilon}(\bm\mu)$. Just like in the proof of Theorem~\ref{thm:LBGeneral}, we use that by definition of $T_\epsilon^*(\bm\mu)$, there exists $b\neq a$ and an alternative model $\bm\lambda$ in which only arm $a$ and $b$ are changed and such that $\lambda_{a} <\lambda_{b} - \epsilon$ that satisfies 
		\begin{equation}T_\epsilon^*(\bm \mu)^{-1} \geq \overline{w}_a(\bm\mu) d\left(\mu_a,\lambda_a\right) + \overline{w}_b(\bm\mu) d\left(\mu_a,\lambda_b\right)\;, \label{SpecificChoice}\end{equation}
		where we recall that the weights vector $\overline{w}(\bm\mu)$ contains the limit fraction of samples allocated to each arm by the converging sampling rule $(A_t)$: for all $i$, it holds that $\bP_{\bm\mu}\left(\lim_{t\rightarrow \infty}N_i(t) / t = \overline{w}_i(\bm\mu)\right)=1$. 
		
		As arm $a$ is not $\epsilon$-optimal in the bandit instance $\bm\lambda$, one has $\bP_{\lambda}(C_\delta,\IHat = a) \leq \bP_{\lambda}(\IHat = a) \leq \delta$. Moreover, the event $C_\delta \cap (\IHat = a)$ belongs to $\cF_{n_\delta}$. Hence, using the low-level Inequality~\eqref{StrongConverse} in Lemma~\ref{lem:CD}, for all $x \in \R$, 
		\[\delta \geq e^{-x}\left[\bP_{\bm\mu} \left(C_\delta,\IHat = a\right) - \bP_{\bm\mu}\left(L_{n_\delta}(\bm\mu,\bm\lambda) \geq x\right)\right]\]
		which can be rewritten as
		\[\bP_{\bm\mu} \left(C_\delta,\IHat = a\right) \leq \delta e^{x} + \bP_{\bm\mu}\left(L_{n_\delta}(\bm\mu,\bm\lambda) \geq x\right)\;.\]
		The choice $x = (1-\beta/2) \ln (1/\delta)$ yields 
		\begin{equation}\bP_\mu \left(C_\delta,\IHat = a\right) \leq \delta^{\frac{\beta}{2}} + \bP_{\bm\mu}\left(\frac{L_{n_\delta}(\bm\mu,\bm\lambda)}{n_\delta} \geq \frac{1-\beta/2}{1-\beta} {T^*_\epsilon(\bm\mu)}^{-1}\right)\;.\label{eq:IntermN1}\end{equation}
		Because of the converging assumption, it holds $\bP_{\bm\mu}$-a.s. (independently of $\delta$) that 
		\[\frac{L_t(\bm\mu,\bm\lambda_a)}{t} \underset{t\rightarrow \infty}{\longrightarrow}  \overline{w}_a(\bm\mu) d\left(\mu_a,\lambda_a\right) + \overline{w}_b(\bm\mu) d\left(\mu_b,\lambda_b\right) \leq {T^*_\epsilon(\bm\mu)}^{-1} < \frac{1-\beta/2}{1-\beta} {T^*_\epsilon(\bm\mu)}^{-1}\;,\]
		where the first inequality holds by definition of $\bm\lambda$, which satisfies \eqref{SpecificChoice}. As $n_\delta \rightarrow \infty$ when $\delta\rightarrow 0$, it follows that
		\[\lim_{\delta\rightarrow 0} \bP_{\bm\mu}\left(\frac{L_{n_\delta}(\bm\mu,\bm\lambda)}{n_\delta} \geq \frac{1-\beta/2}{1-\beta} {T^*_\epsilon(\bm\mu)}^{-1}\right) = 0\]
		and $\lim_{\delta \rightarrow 0} \bP_\mu \left(C_\delta,\IHat = a\right) = 0$, which concludes the proof.
	\end{proof}
	
	\subsection{An improved lower bound for two arms}\label{subsec:TwoArmsImproved} $\epsilon$-best arm identification in a two-armed bandit model $\bm\mu = (\mu_1,\mu_2)$ coincides with an active testing procedure to select one of the hypotheses
	\[\cH_1 : (\mu_1 > \mu_2 - \epsilon) \ \ \ \text{or} \ \ \ \cH_2 : (\mu_2 > \mu_1 - \epsilon)\]
	and an $(\epsilon,\delta)$-PAC strategy should satisfy  $\bP\left(\IHat = 2\right) \leq \delta$ for all $\bm\mu$ such that $\mu_1 > \mu_2 + \epsilon$ and $\bP\left(\IHat = 1\right) \leq \delta$ for all $\bm\mu$ such that $\mu_2 > \mu_1 + \epsilon$. In that case, a first observation is that the complexity term $T^*_{\epsilon}(\bm\mu)$ can be made slightly more explicit. The proof of Lemma~\ref{prop:AlternativeComplexity} can be found in Appendix~\ref{proof:Complexity2arms}.
	
	\begin{lemma}\label{prop:AlternativeComplexity} If $\mu_a \geq \mu_b - \epsilon$, define $\mu_{\epsilon}^*(\mu_a,\mu_b)$ as the unique solution in $\lambda \in (\mu^-,\mu^+ - \epsilon)$ to
		\[d(\mu_a,\lambda) = d(\mu_b,\lambda + \epsilon)\;.\]
		Then
		\[T^*_\epsilon(\bm \mu)^{-1} := \left\{\begin{array}{lcl}
		d(\mu_1,\mu_{\epsilon}^*(\mu_1,\mu_2)) & \text{if} & \mu_1 > \mu_2 + \epsilon \;, \\
		\max[d(\mu_1,\mu_{\epsilon}^*(\mu_1,\mu_2)),d(\mu_2,\mu_{\epsilon}^*(\mu_2,\mu_1))] & \text{if} & \mu_1 - \mu_2 \in (-\epsilon,\epsilon)\;,\\
		d(\mu_2,\mu_{\epsilon}^*(\mu_2,\mu_1)) & \text{if} & \mu_2 > \mu_1 + \epsilon\;.
		\end{array}
		\right.\]
	\end{lemma}

	Note that the quantity $\mu^{*}_\epsilon(\mu_a,\mu_b)$ defined in Lemma~\ref{prop:AlternativeComplexity} is indeed well-defined when $\mu_a \geq \mu_b - \epsilon$, as the the mapping $g: \mu\mapsto d(\mu_a,\mu)-d(\mu_b,\mu+\epsilon)$ is decreasing on $[\mu_b-\epsilon,\mu_a]$ and satisfies $g(\mu_b-\epsilon)=d(\mu_a,\mu_b-\epsilon)>0$ and $g(\mu_a) = - d(\mu_b,\mu_a+\epsilon) <0$.

	For two armed bandits, we provide below an asymptotic sample complexity lower bound that relax the converging assumption required for Theorem~\ref{thm:LBConvergent}. Theorem~\ref{thm:2armsGeneralSampling} given below still requires the sampling rule $A_t^\delta$ to be independent from $\delta$ (we write $A_t$ instead of $A_t^\delta$), but doesn't need the fraction of selection of each arm to converge to a certain proportion under that sampling rule. Its proof is given in Appendix~\ref{proof:2armsGeneralSampling}.
	
	\begin{theorem}\label{thm:2armsGeneralSampling} Every $\epsilon$-PAC family of \emph{anytime} strategies (i.e., for which the sampling rule $(A_t)$ is independent from $\delta$) satisfies, for all $\bm \mu$,  
		\[\liminf_{\delta \rightarrow 0} \frac{\bE_{\bm \mu}[\tau_\delta]}{\ln(1/\delta)} \geq T^*_\epsilon(\bm \mu)\;.\] 
	\end{theorem}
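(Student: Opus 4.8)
The plan is to follow the template of the proof of Theorem~\ref{thm:LBConvergent} but to replace the use of a converging allocation by a device specific to the two-armed case. Fix $\beta>0$, set $n_\delta:=\lfloor (1-\beta)\,T^*_\epsilon(\bm\mu)\ln(1/\delta)\rfloor$, $C_\delta:=(\tau_\delta\le n_\delta)$, and $g^\star:=T^*_\epsilon(\bm\mu)^{-1}$. As before, by Markov's inequality it suffices to prove $\bP_{\bm\mu}(C_\delta)\to 0$ and then let $\beta\to 0$, and by the $(\epsilon,\delta)$-PAC property $\bP_{\bm\mu}(C_\delta)\le\sum_{a\in\cA_\epsilon(\bm\mu)}\bP_{\bm\mu}(C_\delta,\IHat=a)+\delta$, so it is enough to show $\bP_{\bm\mu}(C_\delta,\IHat=a)\to 0$ for each fixed $a\in\cA_\epsilon(\bm\mu)$; write $b$ for the other arm.

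The obstruction, compared with Theorem~\ref{thm:LBConvergent}, is that without the converging assumption $N_a(t)/t$ need not converge, so $L_t(\bm\mu,\bm\lambda)/t$ fails to converge for a generic alternative $\bm\lambda$. The idea is to pick the alternative on the ``diagonal'' $d(\mu_a,\lambda_a)=d(\mu_b,\lambda_b)$, where the convex combination $w_ad(\mu_a,\lambda_a)+w_bd(\mu_b,\lambda_b)$ no longer depends on $w$. Since $a\in\cA_\epsilon(\bm\mu)$ forces $\mu_a\ge\mu_b-\epsilon$, the quantity $\mu^*_\epsilon(\mu_a,\mu_b)$ of Proposition~\ref{prop:AlternativeComplexity} is well defined, and one checks from that proposition that $d\big(\mu_a,\mu^*_\epsilon(\mu_a,\mu_b)\big)\le g^\star$ in all three cases. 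Given a small $\gamma>0$, I would fix $\rho>0$ small enough that the alternative $\bm\lambda_\rho$ with $\lambda_a=\mu^*_\epsilon(\mu_a,\mu_b)$ and $\lambda_b=\mu^*_\epsilon(\mu_a,\mu_b)+\epsilon+\rho$ satisfies $\bm\lambda_\rho\in\cI^2$ and $\max\{d(\mu_a,\lambda_a),d(\mu_b,\lambda_b)\}\le(1+\gamma)g^\star$ (using $d(\mu_a,\lambda_a)=d(\mu_b,\lambda_b-\rho)$ and continuity of $d(\mu_b,\cdot)$). Because $\lambda_a<\lambda_b-\epsilon$, arm $a$ is not $\epsilon$-optimal under $\bm\lambda_\rho$, so $\bP_{\bm\lambda_\rho}(\tau_\delta<\infty,\IHat=a)\le\delta$.

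Next I would prove the almost-sure bound
\[
\limsup_{n\to\infty}\ \frac{L_n(\bm\mu,\bm\lambda_\rho)}{n}\ \le\ (1+\gamma)\,g^\star\qquad \bP_{\bm\mu}\text{-a.s.}
\]
Indeed $L_n(\bm\mu,\bm\lambda_\rho)=S_a(n)+S_b(n)$, where $S_c(n)$ is a sum of $N_c(n)$ i.i.d.\ increments of mean $d(\mu_c,\lambda_c)$; by the strong law of large numbers $S_c(n)/N_c(n)\to d(\mu_c,\lambda_c)$ on $\{N_c(n)\to\infty\}$, whereas if one arm is pulled only finitely often then its $S_c(n)/n\to 0$ while $N_{c'}(n)/n\to 1$ for the other arm. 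Since $N_a(n)/n+N_b(n)/n=1$, in every case $\limsup_nL_n(\bm\mu,\bm\lambda_\rho)/n\le\max\{d(\mu_a,\lambda_a),d(\mu_b,\lambda_b)\}\le(1+\gamma)g^\star$. Now apply the low-level change of distribution of Lemma~\ref{lem:CD} to $C=(C_\delta,\IHat=a)\in\cF_{n_\delta}$ with parameters $\bm\mu,\bm\lambda_\rho$ and level $x_\delta:=(1-\beta/2)\ln(1/\delta)$; using $\bP_{\bm\lambda_\rho}(C)\le\delta$,
\[
\bP_{\bm\mu}(C_\delta,\IHat=a)\ \le\ \delta e^{x_\delta}+\bP_{\bm\mu}\!\big(L_{n_\delta}(\bm\mu,\bm\lambda_\rho)\ge x_\delta\big)\ =\ \delta^{\beta/2}+\bP_{\bm\mu}\!\Big(\tfrac{L_{n_\delta}(\bm\mu,\bm\lambda_\rho)}{n_\delta}\ge\tfrac{x_\delta}{n_\delta}\Big).
\]
Choosing $\gamma$ so that $1+\gamma<\tfrac{1-\beta/2}{1-\beta}$ and observing that $x_\delta/n_\delta\to\tfrac{1-\beta/2}{1-\beta}g^\star$, the almost-sure bound above together with dominated convergence applied to $\ind\big(\sup_{n\ge n_\delta}L_n(\bm\mu,\bm\lambda_\rho)/n\ge c\big)$, for a fixed $c\in\big((1+\gamma)g^\star,\tfrac{1-\beta/2}{1-\beta}g^\star\big)$, forces the right-hand side to $0$ as $\delta\to 0$; this concludes.

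The crux is the second step: the single a.s.\ limit of $L_n/n$ available under the converging assumption is gone here, and it is the ``diagonal alternative'' — equivalently, the identity $w_av+w_bv=v$ for $w\in\Sigma_2$ — that restores a usable uniform upper bound. This is genuinely two-armed: it rests on Proposition~\ref{prop:AlternativeComplexity} and on there being a single competitor $b$ for $a$, with no direct analogue when $K\ge 3$, which is exactly why Theorem~\ref{thm:LBConvergent} must assume a converging sampling rule. The $\delta$-independence of $(A_t)$ is what makes the $\limsup$ statement refer to one well-defined trajectory along which $n_\delta\to\infty$; besides that, only minor checks remain (membership $\bm\lambda_\rho\in\cI^2$, and the finitely-pulled-arm sub-case already noted above).
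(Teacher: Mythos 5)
Your proof is correct and takes essentially the same route as the paper: the low-level change of measure of Lemma~\ref{lem:CD} applied to the equalizing alternative built from $\mu^*_\epsilon(\mu_a,\mu_b)$ of Proposition~\ref{prop:AlternativeComplexity}, whose (near-)equal per-arm divergences neutralize the unknown, possibly non-converging allocation. The only differences are minor: the paper controls $L_n(\bm\mu,\bm\lambda)/n$ via the martingale law of large numbers (exploiting the exact identity $d(\mu_a,\lambda_a)=d(\mu_b,\lambda_b)$) rather than your SLLN-plus-convexity $\limsup$ bound, handles the case of a unique $\epsilon$-optimal arm separately with the high-level form to get a non-asymptotic inequality there, and omits the $\rho$-perturbation that you rightly add to guarantee $a\notin\cA_\epsilon(\bm\lambda_\rho)$.
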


	\section{AYMPTOTICALLY OPTIMAL STRATEGIES FOR $\epsilon$-BEST ARM IDENTIFICATION}\label{sec:OptimalStrategy}
	
	The strategy described in this section hinges on the same ideas than the Track-and-Stop strategy proposed by \cite{GK16} for the case $\epsilon=0$. The sampling rule builds on the knowledge of ``optimal weights'' under which the arms should be sampled, whereas the stopping is a parallel GLRT test with an appropriate choice of threshold. We prove that the resulting strategy, called $\epsilon$-Track-and-Stop is asymptotically optimal for some instances of $\epsilon$-best arm identification. 
	
	\subsection{Optimal Weights and their Computation} \label{subsec:WeightsComputation} From the proof of our lower bounds, it appears that a converging strategy matching the lower bound should have its empirical proportions of draws converge to a vector that belongs to the set 
	\[\cW^*_{\epsilon}(\bm\mu)  =  \argmax{w \in \Sigma_K} \max_{a \in \cA_\epsilon} \min_{b \neq a} \inf_{(\lambda_a,\lambda_b) : \lambda_a \leq \lambda_b - \epsilon} \left[w_a d(\mu_a,\lambda_a) + w_b d(\mu_b,\lambda_b)\right]\;.
	\]
	To ease the notation, we assume that $\mu_1$ is an optimal arm, i.e. $\mu_1 \geq \mu_b$ for all $b\in \{1,\dots,K\}$. We define
	\[T_\epsilon^{*,a}(\bm\mu)^{-1} := \sup_{w \in \Sigma_K} \min_{b \neq a} \inf_{(\lambda_a,\lambda_b) : \lambda_a \leq \lambda_b - \epsilon} \left[w_ad(\mu_a,\lambda_a) + w_bd(\mu_b,\lambda_b)\right]\;.\]
	Observe that for $a$ such that there exists $b\neq a$ with $\mu_a = \mu_b-\epsilon$, then $T_\epsilon^{*,a}(\bm\mu) = +\infty$. For $a$ such that $\mu_a > \mu_b - \epsilon$ for all $b\neq a$, we argue below that the argmax in $w$ is unique and define 
	\begin{equation}
	w_\epsilon^{*,a}(\bm\mu) :=  \argmax{w \in \Sigma_K} \min_{b \neq a} \inf_{(\lambda_a,\lambda_b) : \lambda_a \leq \lambda_b - \epsilon} \left[w_ad(\mu_a,\lambda_a) + w_bd(\mu_b,\lambda_b)\right]\;. \label{def:WeightA}
	\end{equation}
	With this notation, for $\epsilon > 0$, one has
	\begin{equation}\cW_\epsilon^*(\bm \mu) = \left\{ w_\epsilon^{*,a}(\bm\mu)  \ \text{ for } \ a \in \argmin{i \in \cA_\epsilon(\bm\mu) : \mu_i > \max_{j\neq i} \mu_j - \epsilon} \ T_\epsilon^{*,i}(\bm\mu)\right\}\;.\label{def:SetWeights}\end{equation}
	For $\epsilon>0$, this set if always non-empty as arm 1 satisfy $\mu_1>\min_{j\neq 1}\mu_i-\epsilon$. For $\epsilon =0$, in the presence of a unique optimal arm, $\cW_\epsilon(\bm\mu) = \{w_0^{*,1}(\bm\mu)\}$, but in the presence of multiple arms with mean $\mu_1$, the optimal weights are not well defined. In this degenerate case, we define $\cW_\epsilon(\bm\mu) = \{|\cA_0(\bm\mu)|^{-1} \ind_{\cA_0(\bm\mu)}\}$.
	
	We now fix $a \in \cA_\epsilon(\bm\mu)$ such that $\mu_a > \max_{b\neq a}\mu_b - \epsilon$ and explain why $w_{\epsilon}^{*,a}(\bm \mu)$ is well-defined and how to compute this probability vector. Introducing the interval \[\cI_{a,b}^{\epsilon} = [\mu^- \vee (\mu_b-\epsilon) , \mu_a \wedge (\mu^+-\epsilon)]\;,\]  it follows from Lemma~\ref{prop:ComputingStuff} stated in Appendix~\ref{appendix:Computations} that  
	\begin{eqnarray*}w^{*,a}_{\epsilon}(\bm\mu) &=& \argmax{w \in \Sigma_K} \min_{b \neq a} \inf_{\lambda \in \cI_{a,b}^{\epsilon}} \left[w_ad(\mu_a,\lambda) + w_bd(\mu_b,\lambda+\epsilon)\right] \\
		& = & \argmax{w \in \Sigma_K} \min_{b \neq a} w_a g_b\left(\frac{w_b}{w_a}\right) 
	\end{eqnarray*}
	where for $b\neq a$ the application $g_b$ (also depending on $a$, $\bm\mu$ and $\epsilon$) is defined by 
	\begin{eqnarray}g_b(x) & = & \inf_{\lambda \in \cI_{a,b}^{\epsilon}} \left[d(\mu_a, \lambda) + x d(\mu_b, \lambda + \epsilon)\right]\;.\label{def:gb}\end{eqnarray}
	In the particular case in which $\mu_a > \mu^+ - \epsilon$ and $\mu_b = \mu^+$, the interval $\cI_{a,b}^{\epsilon}$ is reduced to the point $\mu^+ - \epsilon$ and the mapping $g_b(x) = d(\mu_a,\mu^+ - \epsilon)$ is constant. Besides this degenerate case, $\cI_{a,b}^{\epsilon}$ is a non-degenerate compact interval, and one can leverage this property to show that the mapping $x \mapsto g_b(x)$ is increasing. Moreover, it satisfies $g_b(0)=d(\mu_a,\mu_a \wedge (\mu^+-\epsilon))$ and $\lim_{x\rightarrow \infty} g_b(x) = d(\mu_a,(\mu_b-\epsilon)\vee \mu^-)$. 
	Therefore, one can define the inverse mapping \[x_b(y) = g_b^{-1}(y) \ \ \text{for} \ \ y \in \big[d(\mu_a,\mu_a \wedge (\mu^+-\epsilon)),d(\mu_a,(\mu_b-\epsilon)\vee \mu^-)\big).\]
	We furthermore define the constant mapping $x_a(y) = 1$ for all  $y \in \big[d(\mu_a,\mu_a \wedge (\mu^+-\epsilon)),d(\mu_a,(\max_{b\neq a} \mu_b - \epsilon)\vee\mu^-)\big)$.
	
	To state Theorem~\ref{prop:OptimalWeights} below, which provides a more explicit expression of the weights $w_\epsilon^{*,a}(\bm\mu)$, we also introduce the notation $\lambda_b(x)$ for the minimizer in~\eqref{def:gb}:  
	\[g_b(x) = d\big(\mu_a, \lambda_b(x)) + x d(\mu_b, \lambda_b(x) + \epsilon\big)\]
	The proof of Theorem~\ref{prop:OptimalWeights}, given in Appendix~\ref{proof:OptimalWeights}, hinges on the fact that at the optimum all the values of $g_b(w_b/w_a)$ are identical, which allows to re-parameterize the optimization problem by their common value.
	
	\begin{theorem}\label{prop:OptimalWeights} Fix a bandit instance $\bm\mu$ and $a \in \cA_\epsilon(\bm\mu)$ and let 
		\[\cI_a = \Big[d(\mu_a,\mu_a \wedge (\mu^+-\epsilon)),d(\mu_a,(\max_{b\neq a}\mu_b-\epsilon))\vee \mu^-\Big)\;.\]
		\begin{enumerate}
			\item For degenerate instances $(\bm\mu,\epsilon)$ such that $\mu_a > \mu^+ - \epsilon$ and there exists $b\neq a$ such that $\mu_b=\mu^+$, \[(w^{*,a}_\epsilon(\bm\mu))_b = \ind_{(b = a)}\;.\]
			\item For other instances $(\bm\mu,\epsilon)$, let $y^*$ be the unique solution on $\cI_a$ to the equation 
			\begin{equation}\sum_{b\neq a} \frac{d(\mu_a,\lambda_b(x_b(y)))}{d(\mu_b,\lambda_b(x_b(y)) + \epsilon)}=1\;.\label{eq:ComputeY}\end{equation}
			Then the argmax in \eqref{def:WeightA} is unique and is defined by 
			\[\left(w_\epsilon^{*,a}(\bm\mu)\right)_b = \frac{x_b(y^*)}{\sum_{b=1}^k x_b(y^*)}.\]
		\end{enumerate}
	\end{theorem}
	
	In practice, the optimal weights can be computed by solving \eqref{eq:ComputeY} using binary search. Each computation of the left-hand side requires to compute $\lambda_b(x_b(y))$. For each $x$, $\lambda_b(x)$ is the minimizer of a twice differentiable convex function and can be approximated numerically using for example Newton's method. In the Gaussian case, we may also use the closed form $\lambda_b(x) = \frac{\mu_a + x(\mu_b-\epsilon)}{1+x}$. Then the value of the inverse function $x_b(y)$ can be obtained by solving $g_b(x) = y$ using binary search. 
	
	\subsection{The $\epsilon$-Tracking rule} The Tracking sampling rule was originally proposed by \cite{GK16} for best arm identification ($\epsilon =0$), for which the set of optimal weights $\cW_0(\bm\mu)$ is reduced to a single vector, $w^*(\bm\mu)$. Building on our ability to compute those optimal weights, the Tracking rule is a mechanism that forces the empirical proportions of arm selections to converge to this oracle allocation, by relying on plug-in estimates $w^*(\hat{\bm\mu}(t))$ and ensuring enough exploration.
	
	We present below an extension of this rule, called $\epsilon$-Tracking, that can be used in any $\epsilon$-best arm identification problem. The difference with Tracking is that ties need to be broken when $\cW_*(\hat{\bm\mu}(t))$ contains multiple vectors. We do by relying on a fixed ordering of the arms, introducing the notation 
	\[w^*_\epsilon(\bm\mu) = w_{\epsilon}^{*,a^*}(\bm\mu) \ \ \text{ where} \ \ a^* = \min \Big\{ a \in \{1,\dots,K\} : w_\epsilon^{*,a}(\bm\mu) \in \cW_\epsilon(\bm\mu)\Big\}.\]
	At round $t+1$, $\epsilon$-Tracking first computes the set $U_t = \{a : N_a(t) < \sqrt{t} - K/2\}$ of arms that are currently under-sampled. Then it selects 
	\[A_{t+1} \in \left\{\begin{array}{ll}
	\underset{a \in U_t}{\text{argmin}}  \ N_a(t) \quad \text{if} \ U_t \neq \emptyset & (\textit{forced  exploration}), \text {or otherwise}\\
	\underset{1 \leq a \leq K}{\text{argmax}} \big[ t \times(w_\epsilon^*)_a(\hat{\bm\mu}(t)) -{N_a}(t)\big] & (\textit{tracking the plug-in estimate}).
	\end{array}
	\right.\]
	Observe that this sampling rule does not depend on $\delta$. Other Tracking rules exist, for example tracking the cumulative proportions (C-Tracking), which forces $N_a(t)$ to be close to $\sum_{s=1}^t w^*_a(\hat{\bm\mu}_s(t))$. Their relative merits is discussed in \citep{Degenne18Sticky}.

	\begin{definition}\label{def:regular} An instance $(\bm\mu,\epsilon)$ is regular if the set $\cW_\epsilon^*(\bm \mu)$ is of cardinality one. For a regular instance, $\cW_\epsilon^*(\bm\mu) =\left\{w_\epsilon^*(\bm\mu)\right\}$.  
	\end{definition}
	
	Using the same arguments as those used by \cite{GK16} for the Tracking rule, one can show that for regular instances $\epsilon$-Tracking is a converging strategy (in the sense of Definition~\ref{def:Converging}), which converges to the optimal proportions. 
	
	\begin{lemma}\label{lem:Converging} Let $(\bm\mu,\epsilon)$ be a regular instance of almost optimal best arm identification. Then, under the $\epsilon$-Tracking sampling rule, 
		\[\bP_{\bm\mu}\left(\forall a =1,\dots,K, \ \ \lim_{t\rightarrow\infty} \frac{N_a(t)}{t} = \left(w_\epsilon^*(\bm\mu)\right)_a\right)= 1\;.\] 
	\end{lemma}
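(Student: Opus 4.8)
The plan is to reproduce, in the $\epsilon$-BAI setting, the argument used by~\cite{GK16} to prove that the Tracking rule is converging in the case $\epsilon=0$. The proof rests on three ingredients: forced exploration, consistency of the empirical means, and a deterministic tracking lemma. First, I would exploit the forced-exploration branch of $\epsilon$-Tracking, which is triggered as soon as $U_t=\{a:N_a(t)<\sqrt t - K/2\}\neq\emptyset$ and then pulls the least-sampled such arm: by the same induction on $t$ as in~\cite{GK16}, this guarantees deterministically that $\min_a N_a(t)\ge \sqrt t - K/2$ for all $t$, hence $N_a(t)\to\infty$ for every $a$. Consequently, applying the strong law of large numbers arm by arm, $\hat{\bm\mu}(t)\to\bm\mu$ $\bP_{\bm\mu}$-almost surely; I denote by $\mathcal E$ this event of probability one, on which the rest of the argument is carried out pathwise.

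Second, I would transfer this to the plug-in optimal weights actually tracked by the algorithm. Since $(\bm\mu,\epsilon)$ is regular, $\cW^*_\epsilon(\bm\mu)=\{w^*_\epsilon(\bm\mu)\}$, and the key claim is that every optimal-weights selection from $\cW^*_\epsilon(\bm\mu')$ converges to $w^*_\epsilon(\bm\mu)$ as $\bm\mu'\to\bm\mu$; applied along $\hat{\bm\mu}(t)$ on $\mathcal E$ this gives $w^*_\epsilon(\hat{\bm\mu}(t))\to w^*_\epsilon(\bm\mu)$. This continuity statement follows from Berge's maximum theorem applied to the nested programs~\eqref{def:WeightA}--\eqref{def:SetWeights}: each value $\bm\mu'\mapsto T^{*,a}_\epsilon(\bm\mu')^{-1}$ is the value of $\sup_{w\in\Sigma_K}\min_{b\neq a}\inf_{\lambda}$ of a function jointly continuous in $(\bm\mu',w,\lambda)$ over $\Sigma_K$ and the intervals $\cI^\epsilon_{a,b}$, which vary continuously and stay compact near a regular instance, so these values are continuous; regularity then ensures that the outer argmax in $w$ is locally a singleton and that the argmin over admissible arms in~\eqref{def:SetWeights} is locally stable, which upgrades upper hemicontinuity of the correspondence into convergence of any selection.

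Third, I would invoke the deterministic tracking lemma of~\cite{GK16}: for any sampling rule obtained by alternating the forced-exploration step above with the choice $A_{t+1}\in\argmax{1\le a\le K}\big[t\,(w^*_\epsilon)_a(\hat{\bm\mu}(t))-N_a(t)\big]$, on any event where the tracked vector $w^*_\epsilon(\hat{\bm\mu}(t))$ converges to some $w^\star\in\Sigma_K$, one has $N_a(t)/t\to w^\star_a$ for all $a$. Applying this on $\mathcal E$ with $w^\star=w^*_\epsilon(\bm\mu)$ gives $N_a(t)/t\to(w^*_\epsilon(\bm\mu))_a$ $\bP_{\bm\mu}$-almost surely, i.e. $\epsilon$-Tracking is a converging sampling rule in the sense of Definition~\ref{def:Converging} with $\overline{w}(\bm\mu)=w^*_\epsilon(\bm\mu)$.

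The main obstacle is the continuity statement of the second step. Unlike the $\epsilon=0$ case with a unique optimal arm treated in~\cite{GK16}, here the set $\cA_\epsilon(\bm\mu')$ may jump with $\bm\mu'$, several arms may tie for the argmin in~\eqref{def:SetWeights}, the interval $\cI^\epsilon_{a,b}$ may degenerate (the case singled out in Proposition~\ref{prop:OptimalWeights}), and the selection $w^*_\epsilon$ is genuinely discontinuous in general. One must therefore verify that regularity of $(\bm\mu,\epsilon)$ precisely excludes these pathologies in a neighborhood of $\bm\mu$ — in particular that the admissible arm satisfies the strict inequality $\mu_a>\max_{b\neq a}\mu_b-\epsilon$ and that $\cA_\epsilon$ is locally constant — so that the explicit description of the weights in Proposition~\ref{prop:OptimalWeights} applies uniformly near $\bm\mu$. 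By contrast, the tracking lemma invoked in the third step is a purely combinatorial statement about integer sequences that can be quoted from~\cite{GK16} verbatim.
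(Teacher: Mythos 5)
Your proposal is correct and follows exactly the route the paper intends: the paper offers no written proof of this lemma beyond deferring to ``the same arguments as those used by \cite{GK16} for the Tracking rule,'' and your three ingredients --- forced exploration giving $N_a(t)\to\infty$ for every arm and hence $\hat{\bm\mu}(t)\to\bm\mu$ almost surely, continuity of the optimal-weight selection at a regular instance, and the deterministic tracking lemma of \cite{GK16} --- are precisely those arguments transposed to the $\epsilon$-BAI setting. You also correctly isolate the continuity of $\bm\mu'\mapsto w^*_\epsilon(\bm\mu')$ at a regular instance (upper hemicontinuity of the argmax correspondence plus the singleton value guaranteed by regularity) as the only step requiring verification beyond the $\epsilon=0$ case; this is the same fact the paper itself invokes without proof in its sketch of the proof of Theorem~\ref{thm:OptimalRegular}.
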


	\subsection{The Parallel GLRT Stopping Rule for $\epsilon$-Best Arm Identification}
	
	Introducing for all $a,b$ the statistic
	\[\hat{Z}^\epsilon_{a,b}(t) = \inf_{\bm\lambda : \lambda_a < \lambda_b - \epsilon} \big[ N_a(t)d(\hat{\mu}_a(t),\lambda_a) + N_b(t)d(\hat{\mu}_b(t),\lambda_b)\big]\]
	and $\hat{\cA}_{\epsilon}(t) = \{ a : \hat{\mu}_a(t) \geq \max_{j}\hat{\mu}_j(t) - \epsilon\}$, the parallel GLRT stopping rule can be written 
	\begin{equation}\tau_\delta  =   \inf \left\{ t \in \N : \max_{a \in \hat{\cA}_\epsilon(t)} \min_{b \neq a} \hat{Z}^\epsilon_{a,b}(t)  > \beta(t,\delta)\right\}
	\label{PGLRTeps} \end{equation}
	and the associated recommendation rule is 
	\begin{equation}\IHat \in \underset{{a \in \hat{\cA}_\epsilon(\tau_\delta)}}{\text{argmax}} \ \min_{b \neq a} \hat{Z}^\epsilon_{a,b}(\tau_\delta).\label{RecommendationRule}\end{equation} 
	
	In the Gaussian case, a more explicit expression of $\hat{Z}^\epsilon_{a,b}(t)$ can be given for $a$ and $b$ such that $\hat{\mu}_a(t) \geq \hat{\mu}_b(t) - \epsilon$ (which holds when $a \in \hat{\cA}_\epsilon(t)$): 
	\[\hat{Z}^\epsilon_{a,b}(t) = \frac{1}{2\sigma^2}\frac{N_a(t)N_b(t)}{N_a(t) + N_b(t)}\big(|\hat{\mu}_a(t) - \hat{\mu}_b(t)| + \epsilon\big)^2\;,\]
	Beyond the Gaussian case, when $\hat{\mu}_a(t) \geq \hat{\mu}_b(t) - \epsilon$ one can write 
	\begin{eqnarray*}\hat{Z}^\epsilon_{a,b}(t) &=& \inf_{\lambda \in ]\mu^-,\mu^+-\epsilon[} \big[N_a(t)d(\hat{\mu}_a(t),\lambda) + N_b(t)d(\hat{\mu}_b(t),\lambda+\epsilon)\big]\;,\end{eqnarray*}
	and the solution of this minimization problem has to be numerically approximated. 
	
	Lemma~\ref{lem:deltaCorrect} ensures that the parallel GLRT stopping rule defined in \eqref{PGLRTeps} based on the threshold 
	\[\beta(t,\delta) = 3 K \ln(1 + \ln t) + K\cT\left(\frac{\ln(1/\delta)}{K}\right)\]
	yields an $(\epsilon,\delta)$-PAC strategy when combined with any sampling rule. Using a more refined analysis tailored to particular structure of the parallel GLRT test for $\epsilon$-best arm identification permit to guarantee the $(\epsilon,\delta)$-PAC property for a smaller threshold. Note that the threshold $\beta(t,\delta) = \log\left(\frac{2(K-1)t}{\delta}\right)$ proposed by \cite{GK16} for $\epsilon =0$ (and that rely on different deviation inequalities) can also be used here, but the one given in Lemma~\ref{lem:PGLRTcorrect} can be smaller for large values of $t$.
	
	\begin{lemma}\label{lem:PGLRTcorrect} For any sampling rule, the parallel GLRT stopping rule with threshold  
		\begin{equation}\beta(t,\delta) = 6 \ln(1 + \ln t) + 2\cT\left(\frac{\ln((K-1)/\delta)}{2}\right)\label{RefinedThreshold}\end{equation}
		satisfies $\bP\left(\tau_\delta < \infty, \IHat\notin \cA_{\epsilon}\right) \leq \delta$.
	\end{lemma}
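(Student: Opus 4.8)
The plan is to exploit the pairwise structure of the statistics $\hat{Z}^\epsilon_{a,b}$ so as to replace the $K$-dimensional self-normalized deviation bound used in Lemma~\ref{thm:deltaCorrect} by a family of \emph{two-dimensional} ones; this is precisely what makes the smaller threshold \eqref{RefinedThreshold} sufficient. Fix $\bm\mu \in \cI^K$ and an arm $a^*$ with $\mu_{a^*} = \max_i \mu_i$.

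First I would establish a deterministic reduction. On the error event, write $a = \IHat \notin \cA_\epsilon(\bm\mu)$, so that $\mu_a < \mu_{a^*}-\epsilon$ and in particular $a \neq a^*$. By the stopping rule \eqref{PGLRTeps} and the definition \eqref{RecommendationRule} of $\IHat$ as an arm achieving the outer maximum in the stopping criterion, $\min_{b\neq a}\hat{Z}^\epsilon_{a,b}(\tau_\delta) > \beta(\tau_\delta,\delta)$; since $a^* \neq a$, this yields in particular $\hat{Z}^\epsilon_{a,a^*}(\tau_\delta) > \beta(\tau_\delta,\delta)$. Now the pair $(\mu_a,\mu_{a^*})$ satisfies the constraint $\lambda_a < \lambda_{a^*}-\epsilon$ appearing in the infimum that defines $\hat{Z}^\epsilon_{a,a^*}(\tau_\delta)$, hence is feasible, so
\[\hat{Z}^\epsilon_{a,a^*}(\tau_\delta) \;\leq\; N_a(\tau_\delta)\, d(\hat{\mu}_a(\tau_\delta),\mu_a) + N_{a^*}(\tau_\delta)\, d(\hat{\mu}_{a^*}(\tau_\delta),\mu_{a^*})\;.\]
Putting these together shows that
\[\bigl\{\tau_\delta < \infty,\ \IHat \notin \cA_\epsilon(\bm\mu)\bigr\}\ \subseteq\ \bigcup_{a\,:\,\mu_a < \mu_{a^*}-\epsilon}\ \Bigl\{\exists t\in\N:\ N_a(t)\, d(\hat{\mu}_a(t),\mu_a) + N_{a^*}(t)\, d(\hat{\mu}_{a^*}(t),\mu_{a^*}) > \beta(t,\delta)\Bigr\}\;.\]

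Next I would control each event in this union. Since $a^*$ is $\epsilon$-optimal, the index set $\{a:\mu_a<\mu_{a^*}-\epsilon\}$ has at most $K-1$ elements, so it suffices to bound each event by $\delta/(K-1)$. For a fixed such $a$, the deviation only involves arms $a$ and $a^*$, so I would invoke Theorem 14 in \cite{KK18Mixture} with two arms at confidence $\delta/(K-1)$: with probability at least $1-\delta/(K-1)$, for every $t$,
\[N_a(t)\, d(\hat{\mu}_a(t),\mu_a) + N_{a^*}(t)\, d(\hat{\mu}_{a^*}(t),\mu_{a^*})\ \leq\ 3\bigl[\ln(1+\ln N_a(t)) + \ln(1+\ln N_{a^*}(t))\bigr] + 2\,\cT\!\left(\tfrac{\ln((K-1)/\delta)}{2}\right).\]
Because $N_a(t)\le t$, $N_{a^*}(t)\le t$, and $u\mapsto\ln(1+\ln u)$ is nondecreasing, the right-hand side is at most $6\ln(1+\ln t) + 2\cT(\ln((K-1)/\delta)/2) = \beta(t,\delta)$, so each event in the union has probability at most $\delta/(K-1)$. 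A union bound over the at most $K-1$ offending arms then gives $\bP_{\bm\mu}(\tau_\delta < \infty,\ \IHat \notin \cA_\epsilon(\bm\mu)) \le \delta$, which is the claim.

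The conceptual step — and the only one I expect to require any thought — is the realization that the full $K$-dimensional concentration of Lemma~\ref{thm:deltaCorrect} is wasteful for $\epsilon$-best arm identification, and that pairing every wrongly recommended arm with a single fixed optimal arm $a^*$ collapses the analysis to two-arm concentration. The remaining points are bookkeeping: one should double-check that the inequality $\hat{Z}^\epsilon_{a,a^*}(t) \le N_a(t)\,d(\hat{\mu}_a(t),\mu_a)+N_{a^*}(t)\,d(\hat{\mu}_{a^*}(t),\mu_{a^*})$ still holds under the convention that a summand vanishes when its arm has not yet been sampled (the pair $(\mu_a,\mu_{a^*})$ remaining feasible since $\mu_a+\epsilon<\mu_{a^*}\le\mu^+$), and that Theorem 14 in \cite{KK18Mixture} is applied to the correct pair of arms with the correct per-arm confidence level.
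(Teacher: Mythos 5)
Your proposal is correct and follows essentially the same route as the paper's proof: decompose the error event over the at most $K-1$ arms outside $\cA_\epsilon(\bm\mu)$, pair each such arm with the optimal arm so that the true parameter pair is feasible in the infimum defining $\hat{Z}^\epsilon_{a,a^*}$, and apply the two-dimensional deviation bound of Theorem 14 in \cite{KK18Mixture} at level $\delta/(K-1)$. Your write-up is in fact somewhat more explicit than the paper's (which leaves the reduction from the stopping rule to the pairwise statistic and the monotonicity step $N_a(t)\le t$ implicit), but no new idea is involved.
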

	
	\begin{proof}
		
		\begin{align*}
		\bP_{\bm\mu}\left( \mu_{\hat{a}_\tau} < \mu_1 - \epsilon\right)  &\leq  \sum_{b \notin \cA_\epsilon} \bP_{\bm\mu} (\hat{a}_\tau = b, \tau < \infty) \\
		& \leq  \sum_{b \notin \cA_\epsilon} \bP_{\bm\mu}\left(\exists t\in \N: \inf_{ \mu_b' - \mu_1' < - \epsilon} \left[N_1(t) d(\hat{\mu}_1(t),\mu_1') + N_b(t) d(\hat{\mu}_b(t),\mu_b') \right] > \beta(t,\delta)\right) \\
		& \leq  \sum_{b \notin \cA_\epsilon} \bP_{\bm\mu}\left( \exists t \in \N: N_1(t) d(\hat{\mu}_1(t),\mu_1) + N_b(t) d(\hat{\mu}_b(t),\mu_b) > \beta(t,\delta)\right) \\
		& \leq  \sum_{b \notin \cA_\epsilon} \frac{\delta}{K-1} = \delta\;, 
		\end{align*}
		where the last inequality relies on Theorem 14 in \citep{KK18Mixture}.
	\end{proof}

	\subsection{The $\epsilon$-Track-and-Stop Strategy} We define the $\epsilon$-Track-and-Stop strategy with parameter $\delta$ as the following strategy for $\epsilon$-Best Arm Identification: 
	\begin{itemize}
		\item its sampling rule is the $\epsilon$-Tracking rule
		\item its stopping rule is the parallel GLRT with threshold $\beta(t,\delta)$ given in~\eqref{RefinedThreshold}
		\item its recommendation rule is the one associated to the parallel GLRT, that is $\hat \imath_{\tau_\delta}$ defined in \eqref{RecommendationRule}.
	\end{itemize}
	This strategy, that we denote by $\epsilon$-TaS($\delta$) has the following properties. 
	
	\begin{theorem}\label{thm:OptimalRegular} Fix $\delta \in (0,1]$. Then $\epsilon$-TaS($\delta$) is $(\epsilon,\delta)$-PAC. Moreover, for every \emph{regular} instance $(\bm\mu,\epsilon)$, if $\tau_\delta$ denotes the stopping rule of $\epsilon$-TaS($\delta$),
		\[\limsup_{\delta\rightarrow 0}\frac{\bE_{\bm\mu}[\tau_\delta]}{\ln(1/\delta)} \leq T^*_\epsilon(\bm\mu)\;.\]
	\end{theorem}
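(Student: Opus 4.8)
The plan is to dispatch the $(\epsilon,\delta)$-PAC claim immediately and then spend all the effort on the asymptotic upper bound for a regular instance. The stopping and recommendation rules of $\epsilon$-TaS($\delta$) are exactly those of the parallel GLRT with threshold \eqref{RefinedThreshold}, and Lemma~\ref{lem:PGLRTcorrect} states that this test satisfies $\bP_{\bm\mu}\bigl(\tau_\delta<\infty,\IHat\notin\cA_\epsilon(\bm\mu)\bigr)\le\delta$ for \emph{any} sampling rule; applying it with $\epsilon$-Tracking gives the PAC property. For the complexity bound, fix a regular instance $(\bm\mu,\epsilon)$ and write $w^\star=w^*_\epsilon(\bm\mu)$; by regularity $\cW^*_\epsilon(\bm\mu)=\{w^\star\}$ and $w^\star=w_\epsilon^{*,a^*}(\bm\mu)$ for the arm $a^*$ defined in the $\epsilon$-Tracking rule, so that $F_{a^*}(w^\star,\bm\mu)=T_\epsilon^{*,a^*}(\bm\mu)^{-1}$, where $F_a(w,\bm\mu)=\min_{b\neq a}\inf_{\lambda_a\le\lambda_b-\epsilon}[w_a d(\mu_a,\lambda_a)+w_b d(\mu_b,\lambda_b)]$. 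Since the $\max$ over the finite set of arms and the $\sup$ over $w\in\Sigma_K$ commute, and since any arm $a$ with $\mu_a=\max_j\mu_j-\epsilon$ has $F_a(\,\cdot\,,\bm\mu)\equiv 0$ (hence $T_\epsilon^{*,a}(\bm\mu)=+\infty$), one gets $T_\epsilon^{*,a^*}(\bm\mu)^{-1}=\max_{a\in\cA_\epsilon(\bm\mu)}\sup_w F_a(w,\bm\mu)=T^*_\epsilon(\bm\mu)^{-1}$. By Lemma~\ref{lem:Converging}, $N_a(t)/t\to w^\star_a$ for all $a$ holds $\bP_{\bm\mu}$-almost surely, and the forced exploration of $\epsilon$-Tracking guarantees $N_a(t)\ge(\sqrt t-K/2)_+$, so $\hat{\bm\mu}(t)\to\bm\mu$ a.s.\ as well; call $\mathcal E$ the almost-sure event on which both convergences hold.

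Next I would analyse the self-normalized statistic driving the stopping rule. Set $Z(t)=\max_{a\in\hat\cA_\epsilon(t)}\min_{b\neq a}\hat Z^\epsilon_{a,b}(t)$, so that $\tau_\delta=\inf\{t:Z(t)>\beta(t,\delta)\}$. On $\mathcal E$, because $\mu_{a^*}>\max_{j\neq a^*}\mu_j-\epsilon$ strictly, one has $a^*\in\hat\cA_\epsilon(t)$ for all large $t$; and since $\hat Z^\epsilon_{a^*,b}(t)/t=\inf_{\lambda_a\le\lambda_b-\epsilon}\bigl[\tfrac{N_{a^*}(t)}{t}d(\hat\mu_{a^*}(t),\lambda_a)+\tfrac{N_b(t)}{t}d(\hat\mu_b(t),\lambda_b)\bigr]$, the continuity on compact sets of the value of this one-dimensional convex program in $(N(t)/t,\hat{\bm\mu}(t))$ yields $\min_b\hat Z^\epsilon_{a^*,b}(t)/t\to F_{a^*}(w^\star,\bm\mu)=T^*_\epsilon(\bm\mu)^{-1}$, hence $\liminf_t Z(t)/t\ge T^*_\epsilon(\bm\mu)^{-1}$ on $\mathcal E$. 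Only this one-sided bound is needed (the matching upper limit would follow from $\hat\cA_\epsilon(t)\to\cA_\epsilon(\bm\mu)$ and $\max_a F_a(w^\star,\bm\mu)=T^*_\epsilon(\bm\mu)^{-1}$).

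The quantitative part follows~\cite{GK16}. Fix $\xi\in(0,1)$. By the continuity above and compactness there are $\rho(\xi)>0$ and $N_0(\xi)\in\N$ such that whenever $t\ge N_0(\xi)$, $\|\hat{\bm\mu}(t)-\bm\mu\|\le\rho(\xi)$ and $|N_a(t)/t-w^\star_a|\le\rho(\xi)$ for all $a$, then $Z(t)\ge(1-\xi)\,t\,T^*_\epsilon(\bm\mu)^{-1}$. Let $T_\xi=\inf\{T:\forall t\ge T,\ \|\hat{\bm\mu}(t)-\bm\mu\|\le\rho(\xi)\text{ and }|N_a(t)/t-w^\star_a|\le\rho(\xi)\ \forall a\}$; on $\mathcal E$, $T_\xi<\infty$, and the crucial fact is that $\bE_{\bm\mu}[T_\xi]<\infty$. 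Also let $t^*_\xi(\delta)=\inf\{t:\forall s\ge t,\ (1-\xi)\,s\,T^*_\epsilon(\bm\mu)^{-1}>\beta(s,\delta)\}$; since $\beta(s,\delta)=6\ln(1+\ln s)+2\cT(\ln((K-1)/\delta)/2)$ is sublinear in $s$, $t^*_\xi(\delta)<\infty$, and using $2\cT(\ln((K-1)/\delta)/2)\sim\ln(1/\delta)$ together with $\ln\ln t^*_\xi(\delta)=o(\ln(1/\delta))$ one gets $t^*_\xi(\delta)\sim T^*_\epsilon(\bm\mu)\ln(1/\delta)/(1-\xi)$ as $\delta\to0$. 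For every $t\ge\max(T_\xi,N_0(\xi),t^*_\xi(\delta))$ one has $Z(t)>\beta(t,\delta)$, so on $\mathcal E$, $\tau_\delta\le T_\xi+N_0(\xi)+t^*_\xi(\delta)$; taking expectations, $\bE_{\bm\mu}[\tau_\delta]\le\bE_{\bm\mu}[T_\xi]+N_0(\xi)+t^*_\xi(\delta)$. Dividing by $\ln(1/\delta)$, letting $\delta\to0$ and then $\xi\to0$ gives $\limsup_{\delta\to0}\bE_{\bm\mu}[\tau_\delta]/\ln(1/\delta)\le T^*_\epsilon(\bm\mu)$.

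The main obstacle is the integrability $\bE_{\bm\mu}[T_\xi]<\infty$: Lemma~\ref{lem:Converging} only delivers almost-sure finiteness, and upgrading it requires the non-asymptotic concentration of the $\epsilon$-Tracking rule. One bounds $\bP_{\bm\mu}(\|\hat{\bm\mu}(t)-\bm\mu\|>\rho)$ via the $\Omega(\sqrt t)$ forced exploration and a Chernoff bound (faster than any polynomial decay), then transfers this to $\bP_{\bm\mu}(|N_a(t)/t-w^\star_a|>\rho)$ using that $w_\epsilon^{*,a^*}$ is continuous at $\bm\mu$ — valid precisely because $\mu_{a^*}>\max_{j\neq a^*}\mu_j-\epsilon$ on a regular instance — together with the deterministic tracking bound $|N_a(t)-\sum_{s<t}(w^*_\epsilon)_a(\hat{\bm\mu}(s))|=O(\sqrt t)$; summing these tails over $t$ bounds $\bE_{\bm\mu}[T_\xi]$. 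A lesser point, also handled as in~\cite{GK16}, is that $\hat\cA_\epsilon(t)$ need not stabilise when an arm sits exactly on the boundary $\mu_a=\max_j\mu_j-\epsilon$; this is harmless since such arms contribute $0$ to $Z(t)$ in the limit and the decisive arm $a^*$ is eventually interior to $\hat\cA_\epsilon(t)$, and in the degenerate case $\epsilon=0$ with several optimal arms one uses the convention $\cW^*_0(\bm\mu)=\{|\cA_0(\bm\mu)|^{-1}\ind_{\cA_0(\bm\mu)}\}$ exactly as in~\cite{GK16}.
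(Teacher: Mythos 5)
Your proposal is correct and follows essentially the same route as the paper, which itself only sketches this proof: reduce the PAC claim to Lemma~\ref{lem:PGLRTcorrect}, use Lemma~\ref{lem:Converging} plus continuity of $w^*_\epsilon$ at regular instances to get $\hat Z(t)/t \to T^*_\epsilon(\bm\mu)^{-1}$, and conclude via the deterministic inversion $\tau_\delta \lesssim \inf\{t : t \geq T^*_\epsilon(\bm\mu)\beta(t,\delta)\}$, with the expectation controlled as in Theorem~14 of \cite{GK16}. You correctly identify and outline the one genuinely delicate step that the paper defers to \cite{GK16} — upgrading the almost-sure convergence to $\bE_{\bm\mu}[T_\xi]<\infty$ via forced exploration and Chernoff bounds — so your write-up is, if anything, more complete than the paper's sketch.
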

	
	This result shows that the family of $\epsilon$-TaS($\delta$) strategies for $\delta$ in $(0,1)$, which is a family of converging strategies by Lemma~\ref{lem:Converging}, is $\epsilon$-PAC and is matching the lower bound given in Theorem~\ref{thm:LBConvergent}, for regular instances. 
	
	Beyond regular instances, the picture is intriguingly more complex, and was recently studied in details by \cite{Degenne18Sticky}, who consider a general active testing framework with overlapping hypotheses. They show that $\epsilon$-Tracking can fail to converge to an element in $\cW_\epsilon^*(\bm\mu)$ and propose a ``sticky'' variant fixing this problem. This alternative sampling rule turns out not to be a converging strategy anymore, as under two different runs it may converge to a different element in $\cW_\epsilon^*(\bm\mu)$. However, the authors manage to extend the lower bound of Theorem~\ref{thm:LBConvergent} to \emph{any} strategy (using a nice game-theoretic argument). Hence the quantity $T^*_\epsilon(\bm\mu)$ is indeed the right characteristic number of samples for $\epsilon$-best arm identification. 
	
	\paragraph{Sketch of proof} To understand why the parallel GLRT attains the lower bound of Theorem~\ref{thm:LBConvergent} if coupled with a good sampling rule, we first rewrite its stopping rule 
	\[\tau_\delta = \inf\left\{ t \in \N^* : \hat{Z}(t) \geq \beta(t,\delta)\right\}\;,\]
	where we introduce
	\begin{align*}&\hat Z(t)  =  \max_{a \in \hat{\cA}_\epsilon(t)} \min_{b\neq a} \inf_{\lambda \in (\mu^-,\mu^+-\epsilon)} \left[N_a(t) d (\hat{\mu}_a(t),\lambda) + N_b(t)d(\hat{\mu}_b(t),\lambda + \epsilon)\right]\\
	& \hspace{0.3cm} =  t \times \left(\max_{a \in \hat{\cA}_\epsilon(t)} \min_{b\neq a} \inf_{\lambda \in (\mu^-,\mu^+-\epsilon)} \left[\frac{N_a(t)}{t} d (\hat{\mu}_a(t),\lambda) + \frac{N_b(t)}{t}d(\hat{\mu}_b(t),\lambda + \epsilon)\right]\right)\;.
	\end{align*}
	By Lemma~\ref{lem:Converging}, under a regular instance for which $\cW^*_\epsilon(\bm\mu) = \{w_\epsilon^*(\bm\mu)\}$ the $\epsilon$-Tracking sampling rule satisfies that, for every arm $a$, $\hat{\mu}_a(t)$ converges almost surely to $\mu_a$ and the sampling proportion $N_a(t)/t$ converges almost surely to the target $(w_\epsilon^*(\bm\mu))_a$. Using the fact that $\bm\mu \mapsto w_\epsilon^*(\bm\mu)$ is continuous in any regular instance $\bm\mu$, for large values of $t$ it holds that
	\begin{align*}\hat Z(t) & \simeq  t \times \left(\max_{a \in {\cA}_\epsilon} \min_{b\neq a} \inf_{\lambda \in (\mu^-,\mu^+-\epsilon)} \left[(w_\epsilon^*(\bm\mu))_a d ({\mu}_a,\lambda) + (w_\epsilon^*(\bm\mu))_bd({\mu}_b,\lambda + \epsilon)\right]\right) \\
	& =  t \times (T^*_\epsilon(\bm\mu))^{-1}\;.
	\end{align*}
	Therefore one obtains \begin{equation}\tau_\delta \simeq \inf\left\{ t\in \N^* : t \geq T^*_\epsilon(\bm\mu) \beta(t,\delta)\right\}\;.\label{Quantity}\end{equation}
	If the threshold function were chosen to be $\beta(t,\delta) = \ln(1/\delta)$ one would immediately get the upper bound $T^*_\epsilon(\bm\mu) \ln(1/\delta)$. However, a slightly larger threshold is needed to ensure the $(\epsilon,\delta)$-PAC property of $\epsilon$-TaS($\delta$). Still, the threshold defined in \eqref{RefinedThreshold} satisfies  $\beta(t,\delta) \leq \ln(1/\delta) + \ln(t)$ for large values of $t$ hence Lemma~\ref{lem:Aurelien} can be applied to show that the right hand side of \eqref{Quantity} can be upper bounded by\[T^*_\epsilon(\bm\mu)\ln\left(\frac{1}{\delta}\right) + 2T_\epsilon^*(\bm\mu) \ln\left(T_\epsilon^*(\bm\mu)\ln\left(\frac{1}{\delta}\right)\right)\;.\]
	This argument can be made rigorous to prove that on every regular instance $\bm\mu$,
	\[\bP_{\bm\mu}\left(\lim_{\delta \rightarrow 0} \frac{\tau_\delta}{\ln(1/\delta)} \leq T^*_\epsilon(\bm\mu)\right) = 1\;, \]
	in the spirit of the proof of Proposition 13 in \citep{GK16}. 
	The proof of Theorem~\ref{thm:OptimalRegular} is a bit more complex as it requires to control the expectation $\bE[\tau_\delta]$. It can be obtained by following similar arguments as the proof of Theorem 14 in \citep{GK16}.

	\section{NUMERICAL EXPERIMENTS} 
	
	We report results of some experiments comparing $\epsilon$-Track-and-Stop (TaS) to some state-of-the-art algorithms for $\epsilon$-best arm identification for Bernoulli distributed arms. These competitors are KL-LUCB, KL-Racing \citep{COLT13} and UGapE \citep{Gabillon12UGapE}, briefly described below. For instances having a unique optimal arm, we also include $0$-TaS in our study (designed for finding the unique best arm) in order to assess the gain in sample complexity achieved when studying the $\epsilon$ relaxation. 
	
	We experiments with two regular instances, 
	\[
	\begin{array}{cclrcl}
	\bm\mu_1 &=& [0.2 \ \ 0.4 \ \ 0.5 \ \  0.55 \ \ 0.7] & \epsilon_1 &= &0.1 \\
	\bm\mu_2 &=& [0.4 \ \ 0.5 \ \  0.6 \ \  0.7 \ \  0.75 \ \ 0.8]& \epsilon_2 &=&0.15\;,\\
	\end{array}
	\]
	and one non-regular instance $\bm\mu_3 = [0.2 \ \ 0.3 \ \ 0.45 \ \ 0.55 \ \ 0.6 \ \ 0.6]  \ \ \ \epsilon_3=0.1\;$.

	In the experiments we set $\delta = 0.1$ and perform $N=1000$ independent replications in order to estimate the probability of error $\bP_{\bm\mu_i}(\bm\mu_{\hat\imath_{\tau_\delta}} \notin \cA_{\epsilon_i}(\bm\mu_i))$, and the expected number of samples $\bE_{\bm\mu_i}[\tau_\delta]$ for each algorithm and problem instance. First, Table~\ref{tab:Reco} permits to check that all algorithms are indeed $\delta$-correct: their empirical probability of error is (much) smaller than the maximal value $\delta=0.1$ which is prescribed. 
	
		\begin{table}[h]
		\centering
		\begin{tabular}{|c|c|c|c|c|c|}
			\hline
			&  $\epsilon$-TaS & KL-LUCB &  UGapE & KL-Racing & 0-TaS \\
			\hline
			$\bm\mu_1$, $\epsilon_1=0.1$  & 0.015 & 0.003   & 0.002 & 0.003 & 0.003 \\ \hline
                $\bm \mu_2$, $\epsilon_2 = 0.15$ & 0 & 0 & 0 & 0 & 0\\ \hline
			$\bm \mu_3$, $\epsilon_3 = 0.1$ & 0.001 & 0 & 0 & 0 &  -\\
			\hline
		\end{tabular}
		
		\smallskip
		
		\caption{\label{tab:Reco}\label{tab:results} Empirical error probabilities for various algorithms estimated on $N=1000$ repetitions.
		}
	\end{table}
	
	In those experiments, the stopping rule of Track-and-Stop is the parallel GLRT with threshold $\beta(t,\delta) = \ln((1+\ln(t))/\delta)$. This threshold is an approximation of the theoretical threshold given in Lemma~\ref{lem:PGLRTcorrect} that we recommend to use in practice. All the other algorithms rely on confidence bounds, and we implement them with upper and lower confidence bounds  based on the Kullback-Leibler divergence
	\begin{eqnarray*}
		u_a(t) & = & \max \left\{ q \in [0,1] : N_a(t) d(\hat{\mu}_a(t),q) \leq \beta(t,\delta) \right\} \ \ \text{and} \\
		\text{and} \ \ \ell_a(t) & = & \min \ \left\{ q \in [0,1] : N_a(t) d(\hat{\mu}_a(t),q) \leq \beta(t,\delta) \right\}\;,
	\end{eqnarray*}
	for the same choice of $\beta(t,\delta)=\ln((1+\ln(t))/\delta)$. The stopping rule of KL-LUCB and UGapE consists in waiting for the lower confidence bound of the candidate best arm to overlap by at most $\epsilon$ with the upper-confidence bound of all other arms. KL-Racing is an algorithm based on eliminations that also exploits those confidence intervals: arms in a list of candidate arms are selected in a round robin fashion and when some arm $a$ satisfies $\ell_b(t) > u_a(t) - \epsilon$ for some other arm $b$, it is removed from the list of candidate arms. The algorithm stops when all candidate arms are empirically $\epsilon$-optimal. Those confidence-based stopping or elimination rules seem to be far too conservative compared to the parallel GLRT stopping rule, as the sample complexity results now reveal.

	\begin{table}[h]
		\centering
		\begin{tabular}{|c|c|c|c|c|c|c|}
			\hline
			& $T^*(\bm\mu) \ln(1/\delta)$ & $\epsilon$-TaS & KL-LUCB &  UGapE & KL-Racing & 0-TaS \\
			\hline
			$\bm\mu_1$, $\epsilon_1=0.1$ & 97&  171 \ (104)& 322  \ (137) & 324  \ (143)& 372 \ (159)& 433 \ (277) \\ 
			\hline
			$\bm \mu_2$, $\epsilon_2 = 0.15$ & 108 & 162 \ (83) & 345 \  (135)& 344 \ (141)  & 402 \ (146) &  2659 \ (1863)\\ 
			\hline
			$\bm \mu_3$, $\epsilon_3 = 0.1$ & 531 & 501 \ (261) & 1236 \ (403) & 1199 \  (414) & 1348 \ (436)&  -\\
			\hline
		\end{tabular}
		
		\smallskip
		
		\caption{\label{tab:SC}\label{tab:results2} Estimated values of $\bE_{\bm\mu_i}[\tau_\delta]$ based on $N=1000$ repetitions for different instances and algorithms (standard deviation indicated in parenthesis).
		}
	\end{table}

	Indeed, it can be seen in Table~\ref{tab:SC} that $\epsilon$-TaS requires about half the number of samples needed by other methods before stopping, and is sometimes getting very close to the ideal sample complexity $T^*(\bm\mu)\ln(1/\delta)$. In the first instance, in which there is a single $\epsilon$-best arm, one can see that $\epsilon$-TaS slightly improves over $0$-TaS due to the $\epsilon$-relaxation. This improvement is much more flagrant in the second instance, which contains 3 $\epsilon$-best arms.

	For the two instances with 3 $\epsilon$-best arms $\bm\mu_2$ and $\bm\mu_3$, we now report in Table~\ref{tab:Allocation} the optimal weights in $\cW_\epsilon(\bm\mu)$. There is one such weight for $\bm\mu_2$ and two for $\bm\mu_3$ in which the fraction allocation to the two best arms are exchanged. We also report in this table the empirical fraction of selection of each arm by $\epsilon$-TaS.

	\begin{table}[h]
		\centering
		\begin{tabular}{|c|c|c|c|c|c|c|}
			\hline
			$(\bm\mu_2)_a$ & 0.4 & 0.5 & 0.6 & \textbf{0.7} & \textbf{0.75} & \textbf{0.8} \\ \hline
			$\bm \left(w^*_{\epsilon_2}(\bm\mu_2)\right)_a$ & 0.024 & 0.036 & 0.060 & 0.136 & 0.275 & 0.469 \\ \hline
			$\bE_{\bm\mu_2}[N_a(\tau_\delta)/\tau_\delta]$ & 
			0.079 & 0.077 & 0.099 & 0.156 & 0.235 & 0.353 \\
			\hline
		\end{tabular}
		
		\vspace{0.2cm}
		
		\begin{tabular}{|c|c|c|c|c|c|c|}
			\hline
			$(\bm\mu_3)_a$ & 0.2 & 0.3 & 0.45 & \textbf{0.55} & \textbf{0.6} & \textbf{0.6} \\ \hline
			$\bm \left(w^{*,5}_{\epsilon_3}(\bm\mu_3)\right)_a$ & 0.008 & 0.0133 & 0.035 & 0.114 & 0.436 & 0.393\\ \hline
			$\bm \left(w^{*,4}_{\epsilon_3}(\bm\mu_3)\right)_a$ & 0.008 & 0.0133 & 0.035 & 0.114  & 0.393 & 0.436\\ \hline
			$\bE_{\bm\mu_3}[N_a(\tau_\delta)/\tau_\delta]$ & 0.049 & 0.049 & 0.081 & 0.194 & 0.317 & 0.309 \\ \hline
		\end{tabular}
		\caption{\label{tab:Allocation} Optimal proportions and empirical proportions achieved by $\epsilon$-TaS ($\epsilon$-optimal arms in bold)}
	\end{table}

	For the regular instance $\bm\mu_2$, according to Lemma~\ref{lem:Converging} the empirical proportions converge to the true proportions. However, as can be seen from Table~\ref{tab:Allocation} the convergence is slow and did not yet occur at the stopping time $\tau_\delta$ (all sub-optimal arms are slightly over sampled which may be a consequence of the forced exploration scheme). For the non-regular instance $\bm\mu_3$ in which there are two candidate optimal weights, $\epsilon$-TaS has no convergence guarantee towards one of these weights (and \cite{Degenne18Sticky} propose a fix). Just like for $\bm\mu_2$, we see that all sub-optimal arms are slightly over-sampled under $\bm\mu_3$. On average over many simulations, the strategy spends about the same time on the two optimal arms, which could indicate that ``convergence'' occurs towards $w^{*,4}_{\epsilon_3}(\bm\mu_3)$ or $w^{*,5}_{\epsilon_3}(\bm\mu_3)$ half of the time. Even without proper convergence guarantees, $\epsilon$-TaS performs very well on $\bm\mu_3$ as can be noted from Table~\ref{tab:SC}.

	\section{CONCLUSION} 
	
	We proposed a non-asymptotic way to analyze parallel Generalized Likelihood Ratio Tests, which can be used is various adaptive testing settings, from sequential to active testing, even in the presence of overlapping hypotheses. We also presented a way to derive lower bounds providing the exact asymptotic sample complexity of some testing problems. Remarkably, for problem instances that belong to multiple hypotheses, non-asymptotic lower bounds are still out of reach, maybe because of the use of the low-level change of low formula. An interesting open question is thus to know whether lower bounds can always be derived using only the high-level form, in a way that provides non-asymptotic results. So far, we were not able to achieve this goal, nor to show that it is impossible. 
	
	Natural extensions of our approach can be obtained for other sequential and active testing problems. For example, in bandit models, our approach  is likely to apply also for thresholding bandits, or for best-arm identification with a structure on the set of arms. Moreover, if a regularity constraint like $\mu_{i+1}-\mu_i \leq L$ could be handled, this would open the way to the study of the sample complexity of optimizing a Lipschitz continuous function. In that context, one can hope to exhibit a optimal density of sampling  (corresponding to the optimal weights in the finite case), depending on the function only, which an optimal optimization procedure would need to track. All these developments are left for future investigations.

\section*{ACKNOWLEDGEMENTS}

Emilie Kaufmann acknowledges the support of the Agence Nationale de la Recherche (ANR) under grants ANR-16-CE40-0002 (BADASS project), ANR-19-CE23-0026-04 (BOLD project) and the European CHIST-ERA project DELTA. Aurélien Garivier acknowledges the support of the Project IDEXLYON of the University of Lyon, in the framework of the Programme Investissements d'Avenir (ANR-16-IDEX-0005).

\newpage

	\appendix 
	
	\section{NON-ASYMPTOTIC SAMPLE COMPLEXITY ANALYSIS FOR A GAUSSIAN TEST} \label{proof:GaussianNonAsymptotic}

	We provide a general sample complexity analysis of the parallel GLRT test using a generic threshold function $\beta(t,\delta)$. Introducing the function 
	\[f(t,\delta):=\frac{|\mu| + \epsilon}{\sigma}\sqrt{t} -\sqrt{2\beta(t,\delta)}\;,\]
	we make the following assumptions: 
	\begin{enumerate}
		\item $\beta(t,\delta)  \leq \ell(\delta) + c \ln(t)$ where $\ell(\delta) \geq \ln(1/\delta)$. This permits to define $T_0(\delta)$ to be any constant such that 
		\[\forall t \geq T_0(\delta), \ \  f(t,\delta) \geq 0\;.\]
		\item For all $\delta \in (0,1], \forall t \geq T_0(\delta),$
		\begin{equation}  \frac{\partial }{\partial t}(\sqrt{2 \beta(t,\delta)}) \leq  \frac{1}{2}\frac{\partial }{\partial t}\sqrt{t\frac{(|\mu| + \epsilon)^2}{\sigma^2}} =\frac{|\mu| + \epsilon}{4\sigma^2}\frac{1}{\sqrt{t}}\;.\label{ass:Beta}\end{equation}
	\end{enumerate}
	A consequence of Assumption~2 is that $t \mapsto f(t,\delta)$ is non-decreasing, which permits to upper bound the expectation of $T_\delta$ as follows, introducing $(Z_t)$ a sequence of standard normal random variables.
	\begin{align*}
	\bE[T_{\delta}] & \leq \sum_{t=1}^{\infty} \bP\left(t \frac{(|\hat{\mu}_t| + \epsilon)^2}{2\sigma^2} \leq \beta(t,\delta)\right) \\
	& \leq  \sum_{t=1}^{\infty}\bP\left(\sqrt{t} \left(\left|\mu + \frac{\sigma Z_t}{\sqrt{t}}\right| + \epsilon\right) \leq \sqrt{2\sigma^2\beta(t,\delta)}\right) \\
	& \leq  \sum_{t=1}^{\infty}\bP\left(\sqrt{t} \left(|\mu| - |\sigma Z_t / \sqrt{t}|+\epsilon\right) \leq \sqrt{2\sigma^2\beta(t,\delta)}\right) \\
	& \leq  \sum_{t=1}^{\infty}\bP\left(|Z_t| \geq  \frac{|\mu| + \epsilon}{\sigma}\sqrt{t} -\sqrt{2\beta(t,\delta)}\right) \\
	& \leq  \lceil T_0(\delta)\rceil + \sum_{t=\lceil T_0(\delta)\rceil + 1}^\infty\bP\left(|Z_t| \geq  f(t,\delta)\right)dt \\
	& \leq  \lceil T_0(\delta)\rceil + \int_{T_0(\delta)}^\infty\bP\left(|Z_t| \geq  f(t,\delta)\right) dt\;.
	\end{align*}
	Using assumption \eqref{ass:Beta} one can write 
	\begin{align*}
	f(t,\delta)&=   f(T_0(\delta),\delta) + \int_{T_0(\delta)}^tf'(s,\delta)ds \\
	& \geq  0 + \frac{1}{2}\int_{T_0(\delta)}^t \left(\sqrt{t\frac{(|\mu| + \epsilon)^2}{\sigma^2}}\right)'ds \\
	& = 
	\int_{T_0(\delta)}^t\frac{|\mu| + \epsilon}{4\sigma}\frac{1}{\sqrt{s}}ds\;. \\
	\end{align*}
	For $t \in [T_0(\delta),2T_0(\delta)]$, observe that \[f(t,\delta) \geq (t-T_0(\delta)) \frac{|\mu| + \epsilon}{4\sqrt{2}\sigma}\frac{1}{\sqrt{T_0(\delta)}}\;,\]
	while for $t > 2T_0(\delta)$, 
	\[f(t,\delta) \geq (t-T_0(\delta)) \frac{|\mu| + \epsilon}{4\sigma}\frac{1}{\sqrt{t}} \geq 
	\sqrt{t-T_0(\delta)} \frac{|\mu| + \epsilon}{4\sqrt{2}\sigma}\;.\]
	
	Hence 
	\begin{align*}
	&\bE[T_{\delta}]
	\leq \lceil T_0(\delta)\rceil + \int_{T_0(\delta)}^{2T_0(\delta)}2\overline{\Phi}(f(t,\delta))dt + \int_{2T_0(\delta)}^\infty2\overline{\Phi}(f(t,\delta))dt \\
	& \leq  \lceil T_0(\delta)\rceil + 2\int_{T_0(\delta)}^{\infty}\overline{\Phi}\left((t-T_0(\delta)) \frac{|\mu| + \epsilon}{4\sqrt{2}\sigma}\frac{1}{\sqrt{T_0(\delta)}}\right)dt + 2\int_{2T_0(\delta)}^\infty\overline{\Phi}\left(\sqrt{t-T_0(\delta)} \frac{|\mu| + \epsilon}{4\sqrt{2}\sigma}\right)dt \\
	& \leq  \lceil T_0(\delta)\rceil + \frac{8\sqrt{2}\sigma}{|\mu|+\epsilon}\sqrt{T_0(\delta)}\int_{0}^{\infty}\overline{\Phi}\left(u\right)du + \frac{64\sigma^2}{(|\mu|+\epsilon)^2}\int_{T_0(\delta)\frac{(|\mu|+\epsilon)^2}{8\sigma^2}}^\infty\overline{\Phi}\left(\sqrt{u}\right)du \\
	& \leq  \lceil T_0(\delta)\rceil + \frac{8\sqrt{2}\sigma}{|\mu|+\epsilon}\sqrt{T_0(\delta)} \frac{1}{\sqrt{2\pi}} + \frac{64\sigma^2}{(|\mu|+\epsilon)^2}\int_{T_0(\delta)\frac{(|\mu|+\epsilon)^2}{8\sigma^2}}^\infty e^{-\frac{u}{2}}du \\
	& \leq  1 + T_0(\delta) + \frac{8\sqrt{2}\sigma}{|\mu|+\epsilon}\sqrt{T_0(\delta)} \frac{1}{\sqrt{2\pi}} + \frac{64\sigma^2}{(|\mu|+\epsilon)^2} e^{-T_0(\delta)\frac{(|\mu|+\epsilon)^2}{8\sqrt{2}\sigma^2}}\;.
	\end{align*}
	If the exploration function $\beta$ satisfies 
	\[\beta(t,\delta) \leq \ell(\delta) + c\ln(t)\;,\]
	Lemma~\ref{lem:Aurelien} below permits to prove that one can pick
	\[T_0(\delta) = \frac{2\sigma^2}{(|\mu|+\epsilon)^2}\ell(\delta) + \frac{4c\sigma^2}{(|\mu|+\epsilon)^2} \ln\left(\frac{2\sigma^2}{(|\mu|+\epsilon)^2}\ell(\delta)\right)\;.\]
	In that case,
	\begin{multline*}
	\bE[T_\delta]  \leq  \frac{2\sigma^2}{(|\mu|+\epsilon)^2}\left[\ell(\delta) + 2c \ln\Bigg(\frac{2\sigma^2}{(|\mu|+\epsilon)^2}\ell(\delta)\right) \\+ 8\sqrt{\ell(\delta) + 2c \ln\left(\frac{2\sigma^2}{(|\mu|+\epsilon)^2}\ell(\delta)\right)}+ {32\delta^{1/8}}\Bigg]+ 1 \;. \\
	\end{multline*}
	Theorem~\ref{thm:GaussianNonAsymptotic} follows from observing that the threshold \eqref{ThresholdGLRTGaussian} for which the parallel GLRT is $\delta$-correct satisfies 
	\[\beta(t,\delta) \leq \cT\left(\ln(1/\delta)\right) + 3\frac{1 + \ln(t)}{e}\;,\]
	and thus Assumption 1 is satisfied with $c=3/e$ and $\ell(\delta) = \cT\left(\ln(1/\delta)\right) + 3/e$. It can  be checked by calculus that Assumption 2 is  also satisfied.

	\begin{lemma}\label{lem:Aurelien} Fix $\alpha\geq 0$ and $\gamma \geq 1 + \alpha$. Then for all $t > 0$,
		\[t \geq \gamma + 2\alpha \ln (\gamma) \ \ \Rightarrow \ \ \ t \geq \gamma + \alpha \ln(t)\;.\] 
	\end{lemma}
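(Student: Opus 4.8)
The plan is to fix $\gamma$, treat the target inequality as a statement about a single function of $t$, and exploit monotonicity so that only one boundary value needs checking. First I would dispose of the trivial case $\alpha = 0$: then both sides of the implication read $t \geq \gamma$, so there is nothing to prove. Hence assume $\alpha > 0$; the hypothesis $\gamma \geq 1 + \alpha$ then forces $\gamma > 1$, and in particular $\ln \gamma > 0$.

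Next I would introduce $\phi(t) := t - \gamma - \alpha \ln t$ on $(0,\infty)$, so that the desired conclusion is exactly $\phi(t) \geq 0$. Since $\phi'(t) = 1 - \alpha/t \geq 0$ for $t \geq \alpha$, the function $\phi$ is non-decreasing on $[\alpha,\infty)$. Writing $t_0 := \gamma + 2\alpha \ln \gamma$, the fact that $\ln\gamma \geq 0$ gives $t_0 \geq \gamma \geq 1 + \alpha > \alpha$, so every $t \geq t_0$ satisfies $\phi(t) \geq \phi(t_0)$. It therefore suffices to prove $\phi(t_0) \geq 0$.

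Finally I would check $\phi(t_0) \geq 0$ by a short computation. Since $\phi(t_0) = 2\alpha\ln\gamma - \alpha\ln t_0$ and $\alpha > 0$, this amounts to $\gamma^2 \geq t_0 = \gamma + 2\alpha\ln\gamma$, i.e. $\gamma(\gamma-1) \geq 2\alpha\ln\gamma$. Using $\alpha \leq \gamma - 1$ and $\ln\gamma \geq 0$, it is enough to show $\gamma(\gamma-1) \geq 2(\gamma-1)\ln\gamma$, and dividing by $\gamma - 1 > 0$ reduces this to the elementary inequality $\gamma \geq 2\ln\gamma$, which holds for all $\gamma > 0$ since $\gamma \mapsto \gamma - 2\ln\gamma$ attains its minimum at $\gamma = 2$ with value $2 - 2\ln 2 > 0$. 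There is no genuine obstacle here: the monotonicity argument does all the work, and the only point to watch is bookkeeping of the sign conditions $\alpha > 0$, $\ln\gamma \geq 0$ and $\gamma - 1 > 0$ that make the reductions legitimate.
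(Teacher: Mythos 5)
Your proof is correct and follows essentially the same route as the paper: both define $\phi(t)=t-\gamma-\alpha\ln t$, use $\phi'(t)=1-\alpha/t\ge 0$ for $t\ge\alpha$ to reduce the claim to checking $\phi(t_0)\ge 0$ at $t_0=\gamma+2\alpha\ln\gamma$, and then verify that boundary value with an elementary bound on the logarithm. The only (immaterial) difference is the final estimate — you reduce to $\gamma\ge 2\ln\gamma$ after using $\alpha\le\gamma-1$, whereas the paper uses $\ln\gamma\le\gamma/e$ to reach $\gamma-1-\alpha\ge 0$ — and your explicit handling of the trivial case $\alpha=0$ is a harmless extra precaution.
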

	
	\begin{proof}
		Let $f(t)=t-\gamma-\alpha\ln(t)$.
		Then $f'(t)=1-\alpha/t\geq 0$ for $t \geq \alpha$. Hence, for all $t\geq t_0:=\gamma+2\alpha\ln(\gamma)\geq \alpha$, 
		\begin{align*}
		f(t)\geq f(t_0)&=\gamma + 2\alpha\ln(\gamma)-\gamma - \alpha\ln\big(\gamma+ 2\alpha \ln(\gamma) \big)\\
		&=\alpha\ln(\gamma) - \alpha \ln\left(1+\frac{2\alpha}{\gamma}\ln(\gamma)\right)
		\end{align*}
		which has the same sign as
		\[\gamma-1-\frac{2\alpha}{\gamma}\ln(\gamma) \geq\gamma-1-\frac{2\alpha\gamma}{e\,\gamma} \geq \gamma-1-\alpha \geq 0\;.  \]
		
	\end{proof}
	
	\section{COMPUTING THE CHARACTERISTIC TIME}\label{appendix:Computations}
	
	The following result is useful at several places to provide more explicit expressions of the characteristic time $T^*_\epsilon(\bm\mu)$. It follows easily from monotonicity properties of $\lambda \mapsto d(\mu_a,\lambda)$ and $\lambda \mapsto d(\mu_b, \lambda)$. 
	
	\begin{lemma} \label{prop:ComputingStuff} Fix $w_a,w_b \in \R$ and $\mu_a$, $\mu_b$ such that $\mu_a \geq \mu_b - \epsilon$. 
		\begin{align*}
		&\inf_{\lambda_a < \lambda_b - \epsilon} \big[w_ad(\mu_a,\lambda_a) + w_bd(\mu_b,\lambda_b)\big]\\ 
		& \hspace{0.5cm}=\inf_{\lambda \in \big[\mu^-\vee (\mu_b-\epsilon),\mu_a \wedge(\mu^+-\epsilon)\big]} \big[w_ad(\mu_a,\lambda) + w_bd(\mu_b,\lambda+\epsilon)\big]\\
		& \hspace{0.5cm} =  \inf_{\lambda \in (\mu^-,\mu^+-\epsilon)} \big[w_ad(\mu_a,\lambda) + w_bd(\mu_b,\lambda+\epsilon)\big]\;.
		\end{align*}
	\end{lemma}
	\noindent This result follows from the monotonicity properties of the mapping $\lambda \mapsto d(\mu_a,\lambda)$ and $\lambda \mapsto d(\mu_b,\lambda)$ that are illustrated in Figure~\ref{fig:KL}. 
	
	\begin{figure}[h]
		\centering
		\includegraphics[height=6cm]{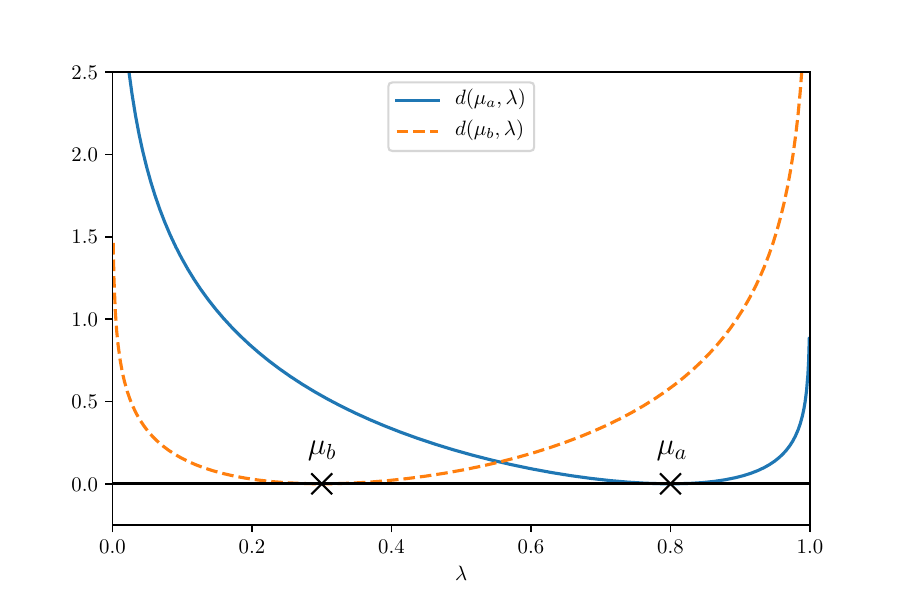}
		\caption{Monotonicity properties of the Kullback-Leibler divergence in the Bernoulli case \label{fig:KL}}
	\end{figure}

	First, we show that the minimizer in $(\lambda_a,\lambda_b)$ of 
	\[g(\lambda_a,\lambda_b) = w_ad(\mu_a,\lambda_a) + w_bd(\mu_b,\lambda_b)\]
	under the constraints $\lambda_a \leq \lambda_b - \epsilon$ is such that $\lambda_a \in \big[\mu^-\vee (\mu_b-\epsilon),\mu_a \wedge(\mu^+-\epsilon)\big]$. Indeed, if $\mu^- < \lambda_a < \mu_b - \epsilon$, 
	\[\inf_{\lambda : \lambda_b > \lambda_a + \epsilon} g(\lambda_a,\lambda) = g(\lambda_a,\mu_b   ) \geq g(\mu_b - \epsilon, \mu_b)\;.\]
	Similarly, if $\lambda_a > \mu_a$ (and $\lambda_a < \mu^+ - \epsilon$),
	\[\inf_{\lambda : \lambda > \lambda_a + \epsilon} g(\lambda_a,\lambda_b) = g(\lambda_a,\lambda_a + \epsilon) \geq g(\mu_a, \mu_a + \epsilon)\;.\]
	Now for $\lambda_a \in \big[\mu^-\vee (\mu_b-\epsilon),\mu_a \wedge(\mu^+-\epsilon)\big]$, $\lambda_b$ satisfying the constraint $\lambda_b \geq \lambda_a + \epsilon$ is larger than $\mu_b$. As $d(\mu_b,\lambda)$ is decreasing for $\lambda \geq \mu_b$, the value $\lambda_b$ that minimizes $w_bd(\mu_b,\lambda_b)$ is the smallest admissible value, that is $\lambda_b = \lambda_a + \epsilon$. This justifies the first equality in Lemma~\ref{prop:ComputingStuff}. 
	
	The second equality follows from the same monotonicity properties: letting $g(\lambda) = w_ad(\mu_a,\lambda) + w_bd(\mu_b,\lambda+\epsilon)$, one can easily show that the minimum of this function on $(\mu^-,\mu^+-\epsilon)$ is attained on $\big[\mu^-\vee (\mu_b-\epsilon),\mu_a \wedge(\mu^+-\epsilon)\big]$. Indeed, for all $\lambda < \mu_b - \epsilon$, $g(\lambda) > g(\mu_b-\epsilon)$ (as the two mappings $\lambda \mapsto d(\mu_a,\lambda)$ and $\lambda \mapsto d(\mu_b,\lambda+\epsilon)$ are decreasing for $\lambda < \mu_b - \epsilon$) and for all $\lambda > \mu_a$, $g(\lambda) > g(\mu_a)$ (the two mappings are increasing for $\lambda > \mu_a$).

	\subsection{Computing the Characteristic Time for Gaussian Distributions} \label{proof:GaussianComputation}
	
	In the Gaussian case, there is no edge effect as $\mu^- = -\infty$ and $\mu^+=+\infty$. Therefore one can write
	\begin{align*}
	T_\epsilon^*(\bm\mu)^{-1} & =  \sup_{w\in \Sigma_K}\max_{a \in \cA_\epsilon(\bm\mu)} \min_{b\neq a} \inf_{\lambda\in \R} \big[w_a d(\mu_a,\lambda) + w_b d(\mu_b,\lambda + \epsilon)\big]\\
	& =  \sup_{w\in \Sigma_K}\max_{a \in \cA_\epsilon(\bm\mu)} \min_{b\neq a} \inf_{\lambda\in \R} \big[w_a d(\mu_a,\lambda-\epsilon) + w_b d(\mu_b,\lambda)\big] \\
	& =  \max_{a \in \cA_\epsilon(\bm\mu)}\sup_{w\in \Sigma_K} \min_{b\neq a} \inf_{\lambda\in \R} \big[w_a d(\mu_a+\epsilon,\lambda) + w_b d(\mu_b,\lambda)\big]\\
	& =  \max_{a \in \cA_\epsilon(\bm\mu)} T^*_0\left(\bm\mu^{a,\epsilon}\right)^{-1}\;,
	\end{align*}
	where the last but one equality uses that $d(\mu_a,\lambda - \epsilon)=d(\mu_a+\epsilon,\lambda)$, which is a special feature of the divergence function $d(x,y) = (x-y)^2/(2\sigma^2)$. Recall that $\bm\mu^{a,\epsilon}$ is a bandit instance identical to $\bm\mu$ expect for arm $a$ that is set to $\mu_a + \epsilon$.
	
	Now, exploiting further the explicit form of the Gaussian divergence, one can write for any $\bm\lambda$ with a unique optimal arm $a^*=a^*(\bm\lambda)$  
	\[T^*_0(\bm\lambda)^{-1} = \sup_{w\in \Sigma_K} \inf_{b\neq a^*} \frac{w_{a^*}w_b}{w_{a^*}+w_b}\frac{\Delta_b(\bm\lambda)}{2\sigma^2} \ \ \ \ \text{with} \ \ \ \ \Delta_b(\bm\lambda) = \lambda^{a^*(\bm\lambda)} - \lambda_b\;.\]
	Moreover, if $\lambda$ does not have a unique optimal arm, $T_\epsilon(\bm\lambda)^{-1} = 0$. Letting 1 be an optimal arm in $\bm\mu$, it can be observed that for all $a$ such that $\mu_a > \mu_1 - \epsilon$, the instance $\bm\mu^{a,\epsilon}$ as a unique optimal arm and the vector of gaps $(\Delta_b(\bm\mu^{a,\epsilon}))_{b\neq a}$ is dominated by the vector of gaps $(\Delta_b(\bm\mu^{1,\epsilon}))_{b\neq 1}$, in the sense that there exists a permutation $\sigma$ such that $\Delta_b(\bm\mu^{a,\epsilon}) \leq \Delta_{\sigma(b)}(\bm\mu^{1,\epsilon})$ (intuitively, adding $\epsilon$ to $\mu_1$ creates the overall largest gaps in the bandit instance compared to adding it elsewhere). This yields $T_\epsilon(\bm\mu^{a,\epsilon})^{-1} \leq T_\epsilon(\bm\mu^{1,\epsilon})^{-1}$. For $a$ such that $\mu_a = \mu_1 - \epsilon$, $T_\epsilon(\bm\mu^{a,\epsilon})^{-1} = 0 \leq T_\epsilon(\bm\mu^{1,\epsilon})^{-1}$. 
	
	Putting things together, we proved that 
	\[T_\epsilon^*(\bm\mu) ^{-1} = \max_{a\in \cA_\epsilon(\bm\mu)} T_0^{*}(\bm\mu^{a,\epsilon})^{-1} =T_0^{*}(\bm\mu^{1,\epsilon})^{-1}\;,\]
	which easily yields the equality \eqref{GaussianSolved} stated in Section~\ref{subsec:ComplexityTerm}. 
	
	\subsection{Computing the Characteristic Time for Two-Armed bandits: proof of Lemma~\ref{prop:AlternativeComplexity}}\label{proof:Complexity2arms}
	
	The optimization problem defining $T^*_\epsilon(\bm\mu)^{-1}$ can rewritten 
	\begin{eqnarray}T^*_\epsilon(\bm\mu)^{-1} &=& \sup_{w \in \Sigma_K} \max_{a \in \cA_\epsilon(\bm\mu)} \min_{b \neq a} \inf_{(\lambda_a,\lambda_b) : \lambda_a \leq \lambda_b - \epsilon} \big[w_a d(\mu_a,\lambda_a) + w_b d(\mu_b,\lambda_b)\big]\nonumber\\
	&=& \sup_{w \in \Sigma_K} \max_{a \in \cA_\epsilon(\bm\mu)} \min_{b \neq a} \inf_{\lambda \in (\mu^-,\mu^+-\epsilon)} \big[w_ad(\mu_a,\lambda) + w_bd(\mu_b,\lambda+\epsilon)\big]\label{OptimRewriting}\;,
	\end{eqnarray}
	where the second equality follows from Lemma~\ref{prop:ComputingStuff}.
	
	The proof relies on the following key observation: if $\mu_a \geq \mu_b - \epsilon$,
	\begin{equation}\label{KeyToProve}
	\sup_{\alpha \in [0,1]} \inf_{\lambda \in (\mu^-,\mu^+-\epsilon)} \big[\alpha d(\mu_a,\lambda) + (1-\alpha)d(\mu_b,\lambda + \epsilon)\big] = d\big(\mu_a,\mu^*_\epsilon(\mu_a,\mu_b)\big)\;. 
	\end{equation}
	Using the  definition of $T^*_\epsilon(\bm\mu)$, Equality~\eqref{OptimRewriting} and the notation $b = 3 - a$ it holds that 
	\begin{eqnarray*}
		T^*_\epsilon(\bm\mu)^{-1} & = & \sup_{w \in \Sigma_2} \max_{a \in \cA_\epsilon} \inf_{\lambda \in (\mu^-,\mu^+-\epsilon)} \big[w_ad(\mu_a,\lambda) + w_bd(\mu_b,\lambda+\epsilon)\big] \\
		& = &   \max_{a \in \cA_\epsilon} \sup_{\alpha \in [0,1]} \inf_{\lambda \in (\mu^-\mu^+-\epsilon)} \big[\alpha d(\mu_a,\lambda) + (1-\alpha)d(\mu_b,\lambda+\epsilon)\big]\;.
	\end{eqnarray*}
	The result follows by using Equation~\eqref{KeyToProve} for all $a \in \cA_\epsilon$ (that is either equal to $\{1\},\{2\}$ or $\{1,2\}$).  
	
	We now prove \eqref{KeyToProve} as a double inequality. First, 
	\begin{align*}
	&\sup_{\alpha \in [0,1]} \inf_{\lambda \in (\mu^-,\mu^+-\epsilon)} \big[\alpha d(\mu_a,\lambda) + (1-\alpha)d(\mu_b,\lambda + \epsilon)\big] \\ &\leq \inf_{\lambda \in (\mu^-,\mu^+-\epsilon)} \max \big[d(\mu_a,\lambda),d(\mu_b,\lambda + \epsilon)\big] =  d(\mu_a,\mu^*_\epsilon(\mu_a,\mu_b))\;.
	\end{align*}
	To prove the second inequality, we denote by \[\lambda^*_{\alpha}(\mu_a,\mu_b) = \argmin{\lambda\in (\mu^-,\mu^+-\epsilon)} \  \big[\alpha d(\mu_a,\lambda) + (1-\alpha)d(\mu_b,\lambda + \epsilon)\big] \]
	and prove that there exists $\alpha \in [0,1]$ such that $\lambda^*_{\alpha}(\mu_a,\mu_b) = \mu^*_\epsilon(\mu_a,\mu_b)$. We first prove that the mapping $\alpha \mapsto \lambda^*_{\alpha}(\mu_a,\mu_b)$ is continuous. This can be established using that the minimizer of the convex function $\lambda \mapsto \alpha d(\mu_a,\lambda) + (1-\alpha)d(\mu_b,\lambda + \epsilon)$ which is attained on the compact set $[\mu_b - \epsilon,\mu_a]$, is unique together with the continuity of $(\alpha,\lambda) \mapsto \alpha d(\mu_a,\lambda) + (1-\alpha)d(\mu_b,\lambda + \epsilon)$. As $\lambda^*_0(\mu_a,\mu_b) = \mu_b - \epsilon$, $\lambda^*_1(\mu_a,\mu_b) = \mu_a$ and $\mu^*_\epsilon(\mu_a,\mu_b) \in (\mu_b - \epsilon,\mu_a)$, from the intermediate values theorem there exists $\alpha^*\in [0,1]$ such that $\lambda^*_{\alpha^*}(\mu_a,\mu_b) = \mu^*_\epsilon(\mu_a,\mu_b) \in (\mu_b - \epsilon,\mu_a)$.
	With this value $\alpha^*$ one can write\begin{align*}
	& \sup_{\alpha \in [0,1]} \inf_{\lambda \in (\mu^-,\mu^+-\epsilon)} \big[\alpha d(\mu_a,\lambda) + (1-\alpha)d(\mu_b,\lambda + \epsilon)\big] \\
	& \hspace{1cm}\geq \alpha^* d(\mu_a,\lambda_{\alpha^*}(\mu_a,\mu_b)) + (1-\alpha^*)d(\mu_b,\lambda_{\alpha^*}(\mu_a,\mu_b) + \epsilon) \\
	& \hspace{1cm} =  \alpha^* d(\mu_a,\mu^*_\epsilon(\mu_a,\mu_b)) + (1-\alpha^*)d(\mu_b,\mu^*_\epsilon(\mu_a,\mu_b) + \epsilon) \\
	& \hspace{1cm} =  d(\mu_a,\mu^*_\epsilon(\mu_a,\mu_b))\;,
	\end{align*}
	which together with the first inequality proves \eqref{KeyToProve}.

	\section{Proof of Theorem~\ref{thm:2armsGeneralSampling}}\label{proof:2armsGeneralSampling}
	
Assume to fix the ideas that $\mu_1 \geq \mu_2$.
		\paragraph{Case 1: ${\mu_1 \geq \mu_2+\epsilon}$} To simplify the notation we use the notation $\mu^*_\epsilon$ to denote the quantity $\mu^*_\epsilon(\mu_1,\mu_2)$ defined in Lemma~\ref{prop:AlternativeComplexity}.  Considering the alternative bandit model $\bm\lambda$ such that $\lambda_1 = \mu^{*}_\epsilon$ and $\lambda_2 = \mu^{*}_\epsilon +\epsilon$, one has $\lambda_1 - \lambda_2 = -\epsilon$, hence $\bP_{\bm\lambda}\left(\IHat = 1\right) \leq \delta$, while $\bP_{\bm\mu}\left(\IHat = 1\right) \geq 1 - \delta$. Inequality~\eqref{StrongConverse} in Lemma~\ref{lem:CD} yields 
		\begin{align*}
		\bE[N_1(\tau_\delta)]d(\mu_1,\lambda_1) + \bE[N_2(\tau_\delta)]d(\mu_2,\lambda_2) &\geq \mathrm{kl}(\delta,1-\delta) \\
		(\bE[N_1(\tau_\delta)] + \bE[N_2(\tau_\delta)])d(\mu_1,\mu^{*}_\epsilon) &\geq \mathrm{kl}(\delta,1-\delta) \\
		\bE[\tau_\delta] & \geq  \frac{1}{d(\mu_1,\mu^{*}_\epsilon)}\mathrm{kl}(\delta,1-\delta)\;,
		\end{align*}
		and the conclusion follows by letting $\delta$ go to zero and using $\mathrm{kl}(\delta,1-\delta) \sim \ln(1/\delta)$.
		
		\paragraph{Case 2: ${0<\mu_1 - \mu_2 < \epsilon}$} As in the proof of Theorem~\ref{thm:2armsGeneralSampling}, we fix $\beta \in (0,1)$ and prove that  
		\begin{equation}\lim_{\delta \rightarrow 0}\bP_{\bm \mu}\Big(\tau_\delta \leq \frac{1-\beta}{d_\epsilon^*(\mu_1,\mu_2)} \ln\left(\frac{1}{\delta}\right)\Big) = 0\;.\label{equ:HighProba}\end{equation}
		Introducing the notation $n_\delta = \lfloor \frac{1-\beta}{d_\epsilon^*(\mu_1,\mu_2)} \ln\left(\frac{1}{\delta}\right)\rfloor$ and $C_\delta = (\tau_\delta \leq n_\delta)$, we prove that $\lim_{\delta \rightarrow 0} \bP_{\bm\mu}(C_\delta)=0$.
		To prove this, we show that $\bP_{\bm\mu}\left(\cC_\delta,\IHat=1\right)$ and $\bP_{\bm\mu}\left(\cC_\delta,\IHat=2\right)$ tend to zero when $\delta$ goes to zero, by invoking Inequality~\ref{WeakConverse} in Lemma~\ref{lem:CD} with a different alternative model $\bm \lambda$ in each case. 
		
		To control $\bP_{\bm\mu}\left(\cC_\delta,\IHat=1\right)$ we consider the alternative model $\bm \lambda$ in which $\lambda_1 = \mu^{*}_\epsilon(\mu_1,\mu_2)$ and $\lambda_2 = \mu^{*}_\epsilon(\mu_1,\mu_2) + \epsilon$, under which $\bP_{\bm\lambda}(\IHat = 1) \leq \delta$. We let $L_t(\bm \mu,\bm \lambda)$ be the log-likelihood ratio of the observations under the two models: 
		\[L_t(\bm\mu,\bm\lambda) = \sum_{s=1}^{N_1(t)} \ln \frac{f_{\mu_1}(Y_{1,s})}{f_{\lambda_1}(Y_{1,s})} + \sum_{s=1}^{N_2(t)} \ln \frac{f_{\mu_2}(Y_{2,s})}{f_{\lambda_2}(Y_{2,s})}.\]
		The choice of $\bm \lambda$ permits to write
		\[\bE_{\bm\mu}\left[\ln \frac{f_{\mu_1}(Y_{1,s})}{f_{\lambda_1}(Y_{1,s}))}\right]=\bE_{\bm\mu}\left[\ln \frac{f_{\mu_2}(Y_{2,s})}{f_{\lambda_2}(Y_{2,s}))}\right] = d(\mu_1,\mu^{*}_\epsilon(\mu_1,\mu_2))\;,\]
		thus $M_t = L_t(\bm\mu,\bm\lambda) - t d( \mu_1,\mu^*_\epsilon(\mu_1,\mu_2))$ is a martingale whose increments have bounded variance, and that does not depend on $\delta$ (as the sampling rule $A_t$ is independent from $\delta$). By the law of large number for martingales, it holds that 
		\begin{equation}\frac{L_t(\bm\mu,\bm\lambda)}{t} \overset{\bP_{\bm\mu} - a.s.}{\underset{t\rightarrow \infty}{\longrightarrow}} d(\mu_1,\mu_\epsilon^{*}(\mu_1,\mu_2)).\label{equ:LLMart1}\end{equation}
		Proceeding as in the previous proofs, it follows from Inequality~\eqref{WeakConverse} with $x = (1-\beta/2) \ln (1/\delta)$ that
		\[\bP_\mu \left(C_\delta,\IHat = 1\right) \leq \delta^{\frac{\beta}{2}} + \bP_{\mu}\left(\frac{L_{n_\delta}(\mu,\epsilon)}{n_\delta} \geq \frac{1-\beta/2}{1-\beta} d_\epsilon^*(\mu_1,\mu_2)\right).\]
		The second term in the RHS tends to zero when $\delta$ goes to zero using \eqref{equ:LLMart1} and the fact that $n_\delta \rightarrow \infty$ and $\frac{1 - \beta/2}{1 - \beta} d^*_\epsilon(\mu_1,\mu_2) > d(\mu_1,\mu_\epsilon^{*}(\mu_1,\mu_2))$. Hence, one obtains $\lim_{\delta \rightarrow 0}\bP_\mu \left(C_\delta,\IHat = 1\right) = 0$.
		
		To control $\bP_{\bm\mu}\left(\cC_\delta,\IHat=2\right)$, we proceed similarly and consider the alternative model $\bm\lambda'$ in which $\lambda_1'=\mu_\epsilon^{*}(\mu_2,\mu_1)$ and $\lambda_2' = \mu_\epsilon^{*}(\mu_2,\mu_1) - \epsilon$, under which $\bP_{\bm \lambda'}(\IHat=2) \leq \delta$. For this particular choice, the log-likelihood ratio satisfies 
		\begin{equation}\lim_{t\rightarrow \infty}\frac{L_t(\bm\mu,\bm\lambda')}{t}\overset{\bP_{\bm\mu} - a.s.}{\underset{t\rightarrow\infty}{\longrightarrow}} d(\mu_2,\mu_\epsilon^{*}(\mu_2,\mu_1))\;.\label{eq:LLMart2}\end{equation}
		Applying Inequality~\ref{WeakConverse} as above yields 
		\[\bP_\mu \left(C_\delta,\IHat = 2\right) \leq \delta^{\frac{\beta}{2}} + \bP_{\mu}\left(\frac{L_{n_\delta}(\mu,\epsilon)}{n_\delta} \geq \frac{1-\beta/2}{1-\beta} d_\epsilon^*(\mu_1,\mu_2)\right)\]
		and noting that $\frac{1-\beta/2}{1-\beta}d_\epsilon^*(\mu_1,\mu_2) > d(\mu_2,\mu_\epsilon^{*}(\mu_2,\mu_1))$ one can prove using \eqref{eq:LLMart2} that the second term in the RHS goes to zero. Hence $\lim_{\delta \rightarrow 0}\bP_\mu \left(C_\delta,\IHat = 2\right) = 0$.

	\section{COMPUTATION OF THE OPTIMAL WEIGHTS}\label{proof:OptimalWeights}
	
	In this section, we prove Theorem~\ref{prop:OptimalWeights}. 
	Let $a\in \cA_\epsilon(\bm\mu)$ be an $\epsilon$-optimal such that $\mu_a > \mu_b - \epsilon$ and recall the definition of the interval 
	\[\cI_{a,b}^{\epsilon} = \big[\mu^-\vee (\mu_b-\epsilon),\mu_a \wedge(\mu^+-\epsilon)\big]\] and of the optimal weights
	\begin{align}
	w_\epsilon^{*,a}(\bm\mu) & =   \argmax{w \in \Sigma_K} \min_{b \neq a} \inf_{\lambda \in \cI_{a,b}^{\epsilon}} \big[w_ad(\mu_a,\lambda) + w_bd(\mu_b,\lambda+\epsilon)\big]. \nonumber\\
	& = \argmax{w \in \Sigma_K} \min_{b \neq a} w_a g_b\left(\frac{w_b}{w_a}\right)\;, \label{Optimumw}
	\end{align}
	where
	\begin{align*}g_b(x) & =  d\big(\mu_a, \lambda_b(x)) + x d(\mu_b, \lambda_b(x) + \epsilon\big) \ \ \text{and}\\
	\lambda_b(x) & =  \argmin{\lambda \in \cI_{a,b}^{\epsilon}} \big[d(\mu_a, \lambda) + x d(\mu_b, \lambda + \epsilon)\big]\;.
	\end{align*}
	
	\paragraph{A particular case} We first study the particular case in which $\mu_a \geq \mu^+-\epsilon$ and there exists $\tilde b \neq a$ such that $\mu_{\tilde b} = \mu^+$. Among common exponential families, this may happen for Bernoulli distribution with $\mu_a = \mu_{\tilde b} = 1$. 
	In that case $\cI_{a,{\tilde b}}^{\epsilon}$ is reduced to the singleton $\{\mu^+ - \epsilon\}$ and 
	\begin{align*}\lambda_{\tilde b}(x) &= d(\mu_a, \mu^+ - \epsilon) + x d(\mu^+,\mu^+) = d(\mu_a,\mu^+ - \epsilon)
	\end{align*}
	is a constant mapping.
	
	Observe that, for all $b\neq a$, as $\lambda \mapsto d(\mu_a, \lambda)$ is decreasing on $(\mu^-,\mu^+ - \epsilon)$,
	\begin{align*}
	w_a d(\mu_a,\mu^+-\epsilon) &\leq \inf_{\lambda \in (\mu^-,\mu^+ -\epsilon)} \big[w_a d(\mu_a,\lambda) + w_b d(\mu_b,\lambda + \epsilon)\big] \\
	&= \inf_{\lambda \in \cI_{a,b}^\epsilon} \big[w_a d(\mu_a,\lambda) + w_b d(\mu_b,\lambda + \epsilon)\big] \;. 
	\end{align*}
	Therefore, $\min_{b\neq a } w_ag_b(w_a/w_b) = w_ag_{\tilde b}(w_a/w_{\tilde b}) = w_a d(\mu_a,1-\epsilon)$ and 
	\[ w_\epsilon^{*,a}(\bm\mu) = \argmax{w \in \Sigma_K}w_a  d(\mu_a,\mu^+-\epsilon) = (\ind_{(b=a)})_{b=1,\dots,K} \;.\]
	This proves the first statement of Theorem~\ref{prop:OptimalWeights}. 
	
	\paragraph{General case} In the general case, $\cI_{a,b}^{\epsilon}$ is a non-degenerated compact interval, and one can define the following inverse mapping 
	\begin{eqnarray*}
		x_b : [d(\mu_a,\mu_a \wedge (\mu^+-\epsilon)),d(\mu_a,(\mu_b-\epsilon)\vee \mu^-)) & \longrightarrow & \R^+\\
		y & \mapsto & g_b^{-1}(y),
	\end{eqnarray*}
	which is increasing. 
	Let $w^*$ be an optimum in \eqref{Optimumw}, and define $x_b^*=w^*_b/w^*_a$ if $b\neq a$ and $x_a^* = 1$. Then 
	\[(x^*_b)_{b\neq a} \in \argmax{\bm x \in (\R^+)^{K-1}}\frac{\min_{b\neq a } g_b(x_b)}{1+\sum_{b\neq a} x_b}\]
	Since all the $g_b$ are increasing, such an optimum equalizes the values of $g_b(x^*_b)$: there exists $y$ such that $\forall b \neq a, \ \ g_b(x^*_b) = y$, and $y$ necessarily belongs to $\cI_{a}=[d(\mu_a,\mu_a \wedge (\mu^+-\epsilon)),d(\mu_a,(\max_{b\neq a}\mu_b-\epsilon)\vee \mu^-))$, the intersection of the image of all functions $g_b$. 
	
	\noindent By re-parameterizing the optimization problem by the common value $y$ of $g_b(x_b)$ (which yields $x_b = x_b(y)$ by definition of the inverse function $x_b$), one obtains $x_b^* = x_b(y^*)$ where 
	\[y^* \in \argmax{y \in \cI_{a}} \frac{y}{1+\sum_{b\neq a} x_b(y)}\;.\]
	
	We now justify that this maximum is unique, and provide a more explicit expression for it. The derivative of  
	$G:\cI_a\to \R$ defined by $G(y) = \frac{y}{1+ \sum_{b\neq a} x_b(y)}$ is 
	\begin{equation}G'(y) = \frac{1 + \sum_{b\neq a} x_b(y) - y \sum_{b\neq a} x_b'(y)}{\big(1+\sum_{b\neq a} x_b(y)\big)^2}\;.\label{calc:Derivative}\end{equation}
	In order to compute $x'_b(y)$ and simplify the expression of this derivative, we rely on the following properties of the minimizer $\lambda_b(x)$. 
	\begin{lemma}\label{prop:LambdaB}
		The mapping $x \mapsto \lambda_b(x)$ is differentiable on $\R^+$ and such that for all $x > 0$,  $\lambda'_b(x) < 0$. Moreover, 
		\[\lim_{x\rightarrow 0} \lambda_b(x) = \mu_a \wedge (\mu^+-\epsilon) \ \ \text{and} \ \ \lim_{x\rightarrow \infty} \lambda_b(x) = (\mu_b-\epsilon) \vee \mu^-\;.\]
	\end{lemma}
	\begin{proof}	
		For every $x$, the function $h_x(\lambda):=d(\mu_a,\lambda)+xd(\mu_b, \lambda+\epsilon)$ is strictly convex: its derivative $h'_x(\lambda)$ is increasing and crosses $0$ at $\lambda_b(x)$.
		The differentiability of $\lambda_b$  
		follows from that of $d$ by the implicit function theorem.  Since for every $\lambda>\mu_b-\epsilon$, $d'_2(\mu_b, \lambda+\epsilon)>0$, for every $y>x$
		\begin{align*}h'_y(\lambda_b(x)) &= d'_2\big(\mu_a, \lambda_b(x)\big)+yd'_2\big(\mu_b, \lambda_b(x)+\epsilon\big) \\&=  d'_2\big(\mu_a, \lambda_b(x)\big)+xd'_2\big(\mu_b, \lambda_b(x)+\epsilon\big) + (y-x)d'_2\big(\mu_b, \lambda_b(x)+\epsilon\big)\\&= 0 + (y-x)d'_2\big(\mu_b, \lambda_b(x)+\epsilon\big)>0\;. \end{align*} 
		Thus, for every $\alpha>0$ small enough,
		\[h'_y\big(\lambda_b(x)-\alpha\big) \geq (y-x)d'_2\big(\mu_b, \lambda_b(x)+\epsilon\big) -  \alpha \sup_{[\lambda_b(x)-\alpha, \lambda_b(x)]} h''_y >0\;. \]
		Since $\lambda_b(y)$ is the zero of the increasing function $h'_y$, this entails that
		\[ \lambda_b(y) \leq \lambda_b(x) - \frac{(y-x)d'_2(\mu_b, \lambda_b(x)+\epsilon)}{\sup_{[\lambda_b(y), \lambda_b(x)]}h''_y}\;,\] which shows that $\lambda'_b(x)<0$. 
		The limit when $x$ goes to $0$ follows from the continuity of $\lambda_b$, and when $x\to\infty$  it appears that 
		\begin{align*}\argmin{\lambda \in \cI_{a,b}^{\epsilon}} \big[d(\mu_a, \lambda) + x d(\mu_b, \lambda + \epsilon)\big] &= \argmin{\lambda \in \cI_{a,b}^{\epsilon}} \big[d(\mu_a, \lambda)/x + d(\mu_b, \lambda + \epsilon)\big] \\&\to \argmin{\lambda \in \cI_{a,b}^{\epsilon}} \big[ d(\mu_b, \lambda + \epsilon)\big]= (\mu_b-\epsilon) \vee \mu^-\;.\end{align*}
	\end{proof}
	
	As the minimum of the convex function $\lambda \mapsto d(\mu_a,\lambda)+d(\mu_b,\lambda + \epsilon)$ on $(\mu^-,\mu^+-\epsilon)$, $\lambda_b(x)$ cancels its derivative, hence 
	\[
	d_2'(\mu_a,\lambda_b(x)) + x d_2'(\mu_b,\lambda_b(x) + \epsilon)=0\;,
	\]
	where $d_2'(x,y) := \partial d(x,y) / \partial y$ denotes the partial derivative of $d(x,y)$ with respect to the second variable. Using this equality, the derivative of $g_b$ simplifies to 
	\begin{align*}
	g_b'(x) & =  d(\mu_b,\lambda_b(x) + \epsilon) + \lambda_b'(x)\big[d_2'(\mu_a,\lambda_b(x)) + xd_2'(\mu_b,\lambda_b(x)+\epsilon)\big] \\
	& =  d(\mu_b,\lambda_b(x) + \epsilon)\;.
	\end{align*}
	Hence for every $y \in \big[d(\mu_a,\mu_a \wedge (\mu^+-\epsilon)),d(\mu_a,(\mu_b-\epsilon)\vee \mu^-)\big)$, 
	\[x_b'(y) = \frac{1}{d\big(\mu_b,\lambda_b(x_b(y)) + \epsilon\big)}\;.\]
	Using this expression in the derivative~\eqref{calc:Derivative} yields 
	\begin{align*}G'(y) \geq 0 & \Leftrightarrow   1 + \sum_{b\neq a } x_b(y) \geq y \sum_{b \neq a } \frac{1}{d\big(\mu_b,\lambda_b(x_b(y))+\epsilon\big)}\\
	& \Leftrightarrow  \sum_{b\neq a}\frac{d\big(\mu_a,\lambda_b(x_b(y))\big)}{d\big(\mu_b,\lambda_b(x_b(y))+\epsilon\big)} \leq 1\;,\end{align*}
	where we use that by definition, for all $b$,
	\[y = g_b(x_b(y)) = d\big(\mu_a,\lambda_b(x_b(y))\big) + x_b(y) d\big(\mu_b,\lambda_b(x_b(y))+\epsilon\big)\;.\]
	
	The mapping \[F(y) := \sum_{b\neq a }\frac{d\big(\mu_a,\lambda_b(x_b(y))\big)}{d\big(\mu_b,\lambda_b(x_b(y))+\epsilon\big)}\] is increasing as a composition of the two decreasing mappings $y \mapsto \lambda_b\big(x_b(y)\big)$ (decreasing by Lemma~\ref{prop:LambdaB}) and  $\lambda \mapsto \sum_{b\neq a }\frac{d(\mu_a,\lambda)}{d(\mu_b,\lambda+\epsilon)}$. With $b^* \in \argmax{b\neq a} \mu_a$, Lemma~\ref{prop:LambdaB} permits to show that 
	\begin{align*}
	\lim_{y \rightarrow d(\mu_a,\mu_a\wedge(\mu^+ - \epsilon)))} \lambda_b(x_b(y)) &= \mu_a \wedge (\mu^+-\epsilon) \quad\text{ and }\\
	\lim_{y \rightarrow d(\mu_a,(\mu_{b^*}-\epsilon)\vee \mu^-)} \lambda_b(x_b(y)) &= (\mu_{b^*} - \epsilon) \vee \mu^-\;.
	\end{align*}
	Thus $F$ tends to $0$ in $d\big(\mu_a,\mu_a\wedge(\mu^+ - \epsilon)\big)$ and to $+\infty$ in $d\big(\mu_a,(\mu_{b^*}-\epsilon)\vee \mu^-\big)$. Hence, there is a unique $y^*$ that satisfies $F(y^*)=1$ and $G'$ is positive for $y < y^*$, negative for $y > y^*$ and satisfies $G'(y^*)=0$. 
	This proves that $G$ as a unique maximum on $\cI_a$, given by the unique solution $y^*$ to the equation 
	\[\sum_{b\neq a}\frac{d\big(\mu_a,\lambda_b(x_b(y))\big)}{d\big(\mu_b,\lambda_b(x_b(y))+\epsilon\big)} = 1\;,\]
	which justifies the second statement of Theorem~\ref{prop:OptimalWeights}.

\bibliography{biblioBandits}

\begin{thebibliography}{}

\bibitem[Audibert et~al., 2010]{Bubeck10BestArm}
Audibert, J.-Y., Bubeck, S., and Munos, R. (2010).
\newblock {Best Arm Identification in Multi-armed Bandits}.
\newblock In {\em {Proceedings of the 23rd Conference on Learning Theory}}.

\bibitem[Bubeck and Cesa-Bianchi, 2012]{Bubeck:Survey12}
Bubeck, S. and Cesa-Bianchi, N. (2012).
\newblock {Regret analysis of stochastic and nonstochastic multi-armed bandit
  problems}.
\newblock {\em Fondations and Trends in Machine Learning}, 5(1):1--122.

\bibitem[Capp{\'e} et~al., 2013]{KLUCBJournal}
Capp{\'e}, O., Garivier, A., Maillard, O.-A., Munos, R., and Stoltz, G. (2013).
\newblock {{K}ullback-{L}eibler upper confidence bounds for optimal sequential
  allocation}.
\newblock {\em Annals of Statistics}, 41(3):1516--1541.

\bibitem[Chernoff, 1959]{Chernoff59}
Chernoff, H. (1959).
\newblock {Sequential design of Experiments}.
\newblock {\em The Annals of Mathematical Statistics}, 30(3):755--770.

\bibitem[Degenne and Koolen, 2019]{Degenne18Sticky}
Degenne, R. and Koolen, W.~M. (2019).
\newblock {Pure Exploration with Multiple Correct Answers}.
\newblock In {\em Advances in Neural Information Processing Systems (NeurIPS)}.

\bibitem[Even-Dar et~al., 2006]{EvenDaral06}
Even-Dar, E., Mannor, S., and Mansour, Y. (2006).
\newblock {Action Elimination and Stopping Conditions for the Multi-Armed
  Bandit and Reinforcement Learning Problems}.
\newblock {\em Journal of Machine Learning Research}, 7:1079--1105.

\bibitem[Gabillon et~al., 2012]{Gabillon12UGapE}
Gabillon, V., Ghavamzadeh, M., and Lazaric, A. (2012).
\newblock {Best Arm Identification: A Unified Approach to Fixed Budget and
  Fixed Confidence}.
\newblock In {\em {Advances in Neural Information Processing Systems}}.

\bibitem[Garivier and Kaufmann, 2016]{GK16}
Garivier, A. and Kaufmann, E. (2016).
\newblock Optimal best arm identification with fixed confidence.
\newblock In {\em Proceedings of the 29th Conference On Learning Theory}.

\bibitem[Garivier et~al., 2019]{GMS16}
Garivier, A., M{\'{e}}nard, P., and Stoltz, G. (2019).
\newblock Explore first, exploit next: The true shape of regret in bandit
  problems.
\newblock {\em Mathematics of Operation Research}, 44(2):377--399.

\bibitem[Juneja and Krishnasamy, 2019]{Juneja19}
Juneja, S. and Krishnasamy, S. (2019).
\newblock Sample complexity of partition identification using multi-armed
  bandits.
\newblock In {\em Proceedings of the 32nd Conference on Learning Theory}.

\bibitem[Kaufmann and Kalyanakrishnan, 2013]{COLT13}
Kaufmann, E. and Kalyanakrishnan, S. (2013).
\newblock {Information complexity in bandit subset selection}.
\newblock In {\em {Proceeding of the 26th Conference On Learning Theory.}}

\bibitem[Kaufmann and Koolen, 2018]{KK18Mixture}
Kaufmann, E. and Koolen, W. (2018).
\newblock {Mixture Martingales Revisited with Applications to Sequential Tests
  and Confidence Intervals}.
\newblock {\em arXiv:1811.11419}.

\bibitem[Kaufmann et~al., 2018]{KKG18Murphy}
Kaufmann, E., Koolen, W., and Garivier, A. (2018).
\newblock Sequential test for the lowest mean: From thompson to murphy
  sampling.
\newblock In {\em Advances in Neural Information Processing Systems (NeurIPS)}.

\bibitem[Kim and Nelson, 2001]{KimNelson01}
Kim, S. and Nelson, B.~L. (2001).
\newblock A fully sequential procedure for indifference-zone selection in
  simulation.
\newblock {\em {ACM} Transaction on Modeling and Computer Simulation},
  11(3):251--273.

\bibitem[Lai, 1988]{Lai88}
Lai, T. (1988).
\newblock {Boundary Crossing problems for samples means}.
\newblock {\em Annals of Probability}, 16(1):375--396.

\bibitem[Lai and Robbins, 1985]{LaiRobbins85bandits}
Lai, T. and Robbins, H. (1985).
\newblock {Asymptotically efficient adaptive allocation rules}.
\newblock {\em Advances in Applied Mathematics}, 6(1):4--22.

\bibitem[Mannor and Tsitsiklis, 2004]{MannorTsi04}
Mannor, S. and Tsitsiklis, J. (2004).
\newblock {The Sample Complexity of Exploration in the Multi-Armed Bandit
  Problem}.
\newblock {\em Journal of Machine Learning Research}, pages 623--648.

\bibitem[Robbins, 1952]{Robbins52Freq}
Robbins, H. (1952).
\newblock {Some aspects of the sequential design of experiments}.
\newblock {\em Bulletin of the American Mathematical Society}, 58(5):527--535.

\bibitem[Robbins and Siegmund, 1974]{RobbinsSiegmund74}
Robbins, H. and Siegmund, D. (1974).
\newblock The expected sample size of some tests of power one.
\newblock {\em The Annals of Statistics}, 2(3):415--436.

\bibitem[Russo, 2016]{Russo16}
Russo, D. (2016).
\newblock Simple bayesian algorithms for best arm identification.
\newblock In {\em Proceedings of the 29th Conference On Learning Theory}.

\bibitem[Tsybakov, 2008]{tsyb08np}
Tsybakov, A.~B. (2008).
\newblock {\em Introduction to Nonparametric Estimation}.
\newblock Springer Publishing Company, Incorporated, 1st edition.

\bibitem[Wald, 1945]{Wald45SPRT}
Wald, A. (1945).
\newblock {Sequential Tests of Statistical Hypotheses}.
\newblock {\em Annals of Mathematical Statistics}, 16(2):117--186.

\bibitem[Wilks, 1938]{Wilks38}
Wilks, S. (1938).
\newblock {The Large-Sample Distribution of the Likelihood Ratio for Testing
  Composite Hypotheses}.
\newblock {\em The Annals of Mathematical Statistics}, 9(1):60--62.

\end{thebibliography}

\end{document}